\documentclass{article}
\usepackage{color}
\usepackage{amsmath,amssymb,amsfonts,amsthm,enumerate}
\usepackage{mathrsfs,accents}
\usepackage[T1]{fontenc}
\usepackage[active]{srcltx}
\usepackage{epsfig}
\usepackage{graphicx}

\topmargin -0.5cm
\textheight 22cm
\oddsidemargin 1cm
\textwidth 14.5cm

\def\R{\mathbb R}

\def\N{\mathbb N}

\def\div{\mbox{div}}

\def\<{\langle}
\def\>{\rangle}

\numberwithin{equation}{section}

\newtheorem{theorem}{Theorem}

\newtheorem{lemma}[theorem]{Lemma}
\newtheorem{proposition}[theorem]{Proposition}

\newtheorem{definition}[theorem]{Definition\rm}
\newtheorem{remark}[theorem]{Remark}

\numberwithin{theorem}{section}

\newcommand{\omM}{\omega_M }
\newcommand{\omS}{\omega_{{\mathscr S}}}




\title{Strong Sard Conjecture and regularity of singular minimizing geodesics for analytic sub-Riemannian structures in dimension 3}


\author{A.~Belotto da Silva\thanks{Universit\'e Aix-Marseille Institut de Math\'{e}matiques de Marseille, UMR CNRS 7373,
Centre de Math\'{e}matiques et Informatique, 39, rue F. Joliot Curie, 13013 Marseille, France ({\tt andre-ricardo.belotto-da-silva@univ-amu.fr})}
\and
A.~Figalli\thanks{ETH Z\"urich, Mathematics Department, R\"amistrasse 101, 8092 Z\"urich, Switzerland ({\tt alessio.figalli@math.ethz.ch})}
\and
A.~Parusi\'nski\thanks{Universit\'e C\^ote d'Azur, CNRS, Labo.\ J.-A.\ Dieudonn\'e, UMR CNRS 7351, Parc Valrose, 06108 Nice Cedex 02, France ({\tt adam.parusinski@unice.fr})}
\and
L.~Rifford\thanks{Universit\'e C\^ote d'Azur, CNRS, Inria, Labo.\ J.-A.\ Dieudonn\'e,  UMR CNRS 7351, Parc Valrose, 06108 Nice Cedex 02, France ({\tt ludovic.rifford@math.cnrs.fr})}  
}

\date{}


\makeindex

\begin{document}

\maketitle

\begin{abstract}
In this paper we prove the strong Sard conjecture for sub-Riemannian structures on 3-dimensional analytic manifolds. More precisely, given a totally nonholonomic analytic distribution of rank 2 on a 3-dimensional analytic manifold, we investigate the size of the set of points that can be reached by singular horizontal paths starting from a given point and prove that it has Hausdorff dimension at most $1$. In fact, provided that the lengths of the singular curves under consideration are bounded with respect to a given complete Riemannian metric, we demonstrate that such a set is a semianalytic curve. As a consequence, combining our techniques with recent developments on the regularity of sub-Riemannian minimizing geodesics, we prove that minimizing sub-Riemannian geodesics in 3-dimensional analytic manifolds are always of class $C^1$, and actually are analytic outside of a finite set of points. 
\end{abstract}


\section{Introduction}\label{Introduction}

Let $M$ be a smooth connected manifold of dimension $n\geq 3$ and $\Delta$ a  {\it totally nonholonomic distribution} of rank $m<n$ on $M$, that is, a smooth subbundle of $TM$ of dimension $m$ generated locally by $m$ smooth vector fields $X^1, \ldots,X^m$ satisfying the H\"ormander condition
\begin{eqnarray*}
\mbox{Lie} \Bigl\{ X^1, \ldots,X^m \Bigr\} (x) = T_xM \qquad \forall\,x \in \mathcal{V}.
\end{eqnarray*}
By the Chow-Rashevsky Theorem, $M$ is horizontally path-connected with respect to $\Delta$. In other words, for every pair of points $x,y \in M$ there is a {\it horizontal path}  $\gamma : [0,1] \rightarrow M$ connecting them, i.e., an absolutely continuous curve $\gamma : [0,1] \rightarrow M$ with derivative in $L^2$, satisfying
\begin{eqnarray*}
\dot{\gamma}(t) \in \Delta_{\gamma(t)} \qquad  \mbox{for a.e. } t \in [0,1],\qquad \gamma(0)=x,\quad \gamma(1)=y.
\end{eqnarray*}
For every $x\in M$, denote by $\Omega_{\Delta}^{x}$ the set of horizontal paths on $[0,1]$ starting from $x$ equipped with the Sobolev $W^{1,2}$-topology. The Sard conjecture is concerned with the set of points that can be reached from a given $x\in M$ by the so-called singular curves in $\Omega_{\Delta}^x$. In order to state precisely the Sard conjecture it is convenient to identify the horizontal paths with the trajectories of a control system. For further details on notions and results of sub-Riemannian geometry\footnote{Actually, sub-Riemannian geometry is concerned with the study of structures of the form $(\Delta,g)$, called sub-Riemannian structures or SR structures, where $\Delta$ is a totally nonholonomic distribution on $M$ and $g$ is a metric over $\Delta$. We do not need to consider a metric over $\Delta$ to state the Sard conjectures investigated in this paper but we shall need a metric later, both for the second part of our first theorem and for our second theorem.} given in the introduction, we refer the reader to Bella\"iche's monograph \cite{bellaiche96}, or to the books by Montgomery \cite{montgomery02}, by Agrachev, Barilari and Boscain \cite{abb}, or by the fourth author \cite{riffordbook}. \\

Given a distribution $\Delta$ as above, it can be shown that there exist an integer $k \in [m,m(n+1)]$ and a family of $k$ smooth vector fields $\mathcal F=\{X^1,\ldots,X^k\}$ such that
\begin{eqnarray*}
\Delta_x = \mbox{Span} \Bigl\{ X^1(x), \ldots, X^k(x) \Bigr\}\qquad \forall\,x \in M. 
\end{eqnarray*}
For every $x\in M$, the set of {\it controls} $u=(u_1,\ldots,u_k) \in L^2([0,1],\R^k)$ for which the solution $\mathbf{x}(\cdot)=\mathbf{x}(\cdot \,;x,u)$ to the Cauchy problem 
\begin{eqnarray*}
\dot{\mathbf{x}}(t) = \sum_{i=1}^k u_i(t) X^i(\mathbf{x}(t)) \quad \mbox{for a.e. } t \in [0,1] \quad \mbox{and} \quad  \mathbf{x}(0)=x, 
\end{eqnarray*}
exists over $[0,1]$ is a nonempty open set $\mathcal{U}^x\subset L^2([0,1],\R^k)$. By construction, any solution $\mathbf{x}(\cdot; x,u) : [0,1] \rightarrow M$ with $u\in \mathcal{U}^x$ is a horizontal path in $\Omega_{\Delta}^{x}$. Moreover, by definition, any path $\gamma \in \Omega_{\Delta}^{x}$ is equal to $\mathbf{x}(\cdot \,;x,u)$   for some $u\in \mathcal{U}^x$ (which is not necessarily unique). Given a point $x\in M$, the {\em End-Point Mapping} from $x$ (associated with $\mathcal{F}$ in time $1$) is defined as 
\begin{eqnarray*}
\label{system}
 \begin{array}{rcl}
\mbox{E}^{x} :   \mathcal{U}^{x} & \longrightarrow & M \\
u  & \longmapsto &  \mathbf{x}(1;x,u),
\end{array}
\end{eqnarray*}
and it is of class $C^{\infty}$ on $\mathcal{U}^x$ equipped with the $L^2$-topology. A control $u \in \mathcal{U}^{x}\subset  L^2([0,1], \R^k)$ is called {\it singular} (with respect to $E^u$)  if the linear mapping 
$$
D_u\mbox{E}^{x} \, : \, L^2 \left([0,1],\R^k\right) \, \longrightarrow T_{\scriptsize\mbox{E}^{x}(u)} M
$$
 is not onto, that is, if $\mbox{E}^{x}$ is not a submersion at $u$. Then, a  horizontal path $\gamma \in \Omega_{\Delta}^{x}$ is called \emph{singular} if it coincides with $\mathbf{x}(\cdot\,; x,u)$  for some singular control $u \in \mathcal{U}^x$.  It is worth noticing that the property of singularity of a horizontal path  is independent of the choice of the family $X^1, \ldots, X^k$ and of the control $u$ which is chosen to parametrize the path. For every $x\in M$, we denote by $\mathcal{S}^{x}_{\Delta}$ the set of singular horizontal paths starting at $x$ and we set 
 $$
\mathcal{X}^x_{\Delta} := \Bigl\{ \gamma (1) \, \vert \, \gamma \in \mathcal{S}^{x}_{\Delta}\Bigr\} \subset M.
$$
By construction, the set $\mathcal{X}^x_{\Delta}$ coincides with the set of critical values of a smooth mapping over a space of infinite dimension. In analogy with the classical Sard Theorem in finite dimension (see e.g. \cite{federer69}), the Sard conjecture in sub-Riemannian geometry asserts the following:\\

\noindent {\bf Sard Conjecture.} {\em For every $x\in M$, the set $\mathcal{X}^x_{\Delta}$ has  Lebesgue measure zero in $M$.}\\

The Sard Conjecture cannot be obtained as a straightforward consequence of a general Sard Theorem in infinite dimension, as the latter fails to exist (see for example \cite{BM}). This conjecture remains still largely open, except for  some particular cases in dimension $n\geq 4$ (see \cite{LMOPV,montgomery02,riffordbourbaki})  and for the 3-dimensional case where a stronger conjecture is expected. \\

Whenever $M$ has dimension $3$, the singular horizontal paths can be shown to be contained inside the so-called Martinet surface $\Sigma$ (see Section \ref{sec:framework} below for the definition of the Martinet surface) which happens to be a 2-dimensional set. So, in this case, the Sard Conjecture is trivially satisfied. For this reason, in the 3-dimensional case, the meaningful version of the Sard conjecture becomes the following (here and in the sequel, $\mathcal{H}^s$ denotes the $s$-dimensional Hausdorff measure):\\

\noindent {\bf Sard Conjecture in dimension 3.} {\em For every $x\in M$, $\mathcal{H}^2(\mathcal{X}^x_{\Delta})=0$.}\\

In  \cite{zz95}, Zelenko and Zhitomirskii proved that, for generic rank-two distributions in dimension 3, a stronger version of the Sard conjecture holds.
More precisely, they showed that the Martinet surface is smooth and  the sets $\mathcal{X}^x_{\Delta}$ are locally unions of finitely many smooth curves.
In particular, this implies the generic validity of the Strong Sard Conjecture in dimension 3 (we refer the interested reader to \cite{montgomery02} for a statement of Strong Sard Conjectures in higher dimensions):\\

\noindent {\bf Strong Sard Conjecture in dimension 3.} {\em For every $x\in M$ the set $\mathcal{X}^x_{\Delta}$ has Hausdorff dimension at most $1$.}\\

We note that such a result is the best one can hope for. Indeed, if $y \in \mathcal{X}^x_{\Delta}$ with $y=\gamma(1) \neq x$ for some singular curve $\gamma$, then $\gamma(t) \in \mathcal{X}^x_{\Delta}$ for any $t \in [0,1]$ (this follows by reparameterizing $\gamma$).
Thus, whenever $\mathcal{X}^x_{\Delta}$ contains a point $y \neq x$ then automatically it has at least Hausdorff dimension $1$.

As mentioned above, the results in \cite{zz95} are concerned with generic distributions. Hence, it is natural to investigate what one can say without a genericity assumption, both for the Sard Conjecture and for its Strong version.
Recently, in \cite{BR} the first and fourth authors  proved that the Sard Conjecture in dimension 3 holds whenever:\\
- either $\Sigma$ is smooth;\\
- or some assumption of tangency is satisfied by the distribution over the set of singularities of $\Sigma$.

The aim of the present paper is to show that the description of singular curves given in \cite{zz95} holds in fact for {\em any} analytic distribution in dimension 3.
In particular, we shall prove that the Strong Sard Conjecture holds for any analytic distribution.

To state precisely our result,
we equip $M$ with a Riemannian metric $g$, and for every $x\in M$ and every $L>0$ we denote by $\mathcal{S}^{x,L}_{\Delta,g}$ the set of $\gamma$ in $\mathcal{S}^{x}_{\Delta}$ with length bounded by $L$ (the length being computed with respect to $g$). Then we set
\begin{equation}
\label{eq:XL}
\mathcal{X}^{x,L}_{\Delta,g} := \Bigl\{ \gamma (1) \, \vert \, \gamma \in \mathcal{S}^{x,L}_{\Delta,g}\Bigr\} \subset M.
\end{equation}
We observe that if $g$ is complete, then all the sets $\mathcal{X}^{x,L}_{\Delta,g}$ are compact. Moreover we note that, independently of the metric $g$, there holds
$$
\mathcal{X}^{x}_{\Delta}=\bigcup_{L\in \mathbb N}\mathcal{X}^{x,L}_{\Delta,g}.
$$
Our first result settles the Strong Sard Conjecture in the analytic case. Here and in the sequel, we call {\it singular horizontal curve}  any set of the form $\gamma([0,1])$, where $\gamma:[0,1] \rightarrow M$ is a singular horizontal path. Furthermore, we call {\it semianalytic curve} in $M$ any semianalytic compact connected  subset of $M$ of Hausdorff dimension at most $1$ (see Appendix \ref{App:semianalytic}). 
It is well-known
that semianalytic curves admit a nice stratification into 0-dimensional and 1-dimensional pieces, see Lemma \ref{lem:Stratcurve}.

\begin{theorem}\label{THMSard}
Let $M$ be an analytic manifold of dimension $3$ and $\Delta$ a rank-two totally nonholonomic analytic distribution on $M$. Then any singular horizontal curve is a semianalytic curve in $M$. Moreover, if $g$ is a complete smooth Riemannian metric on $M$ then, for every $x\in M$ and every $L>0$, the set $\mathcal{X}^{x,L}_{\Delta,g}$ is a finite union of singular horizontal curves, so it is a semianalytic curve. In particular, for every $x\in M$, the set $\mathcal{X}^{x}_{\Delta}$ is a countable union of semianalytic curves and it has Hausdorff dimension at most $1$.
\end{theorem}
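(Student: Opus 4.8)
The plan is to reduce the global statement to a local, analytic-geometry statement about the Martinet surface and the characteristic foliation it carries, and then to globalize using compactness. First I would recall the structure of singular horizontal paths in dimension $3$: if $\Delta = \operatorname{Span}\{X^1,X^2\}$ locally, the Martinet surface $\Sigma$ is the (analytic) set where $X^1,X^2$ and $[X^1,X^2]$ fail to span $T_xM$, and every singular horizontal path is contained in $\Sigma$ and is, up to reparametrization, an integral curve of the line field on the smooth part of $\Sigma$ obtained by intersecting $\Delta$ with $T\Sigma$ (the Martinet line field $\mathcal{L}$). The heart of the matter is therefore to understand the integral curves of an analytic line field on a $2$-dimensional analytic (possibly singular) set; the key input here is resolution of singularities and desingularization of the line field, together with the subanalyticity machinery developed in the body of the paper (which the excerpt refers to but does not display). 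I would invoke the main technical results proved earlier in the paper: after a suitable sequence of blow-ups (a resolution tower), the pulled-back Martinet surface becomes smooth and the pulled-back line field has only elementary singularities, and — crucially — the singular trajectories, viewed upstairs, are semianalytic; pushing forward, each singular horizontal curve $\gamma([0,1])$ is a semianalytic subset of $M$, compact (continuous image of a compact interval) and connected, of Hausdorff dimension at most $1$. This proves the first sentence of the theorem. The stratification statement (Lemma~\ref{lem:Stratcurve}) is then a standard fact about semianalytic sets of dimension $\le 1$.

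Next I would address the set $\mathcal{X}^{x,L}_{\Delta,g}$. Fix $x \in M$ and $L>0$; since $g$ is complete, the set of singular horizontal paths from $x$ of $g$-length $\le L$ is compact in the $W^{1,2}$-topology (this is the compactness remark already noted in the excerpt, and follows from Ascoli-type arguments for sub-Riemannian length-minimizing-type families plus closedness of the singularity condition), and hence $\mathcal{X}^{x,L}_{\Delta,g}$ is a compact subset of $M$. The claim is that it is a \emph{finite} union of singular horizontal curves. Here the analyticity is essential. I would argue that $\mathcal{X}^{x,L}_{\Delta,g}$, being contained in $\Sigma$, is swept out by integral curves of $\mathcal{L}$ emanating from the connected component(s) of $\Sigma \cap (\text{something attached to } x)$; working in the resolved picture, the relevant trajectories from a fixed lifted starting fiber, together with the length bound $L$ (which lifts to a bound on an analytic ``time'' function), carve out a \emph{relatively compact semianalytic} subset of the resolution space. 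Its image in $M$ is then subanalytic and compact, hence (by the finiteness properties of subanalytic sets — local finiteness of connected components of strata) a finite union of connected semianalytic pieces, each of dimension $\le 1$; grouping these and reparametrizing, one exhibits $\mathcal{X}^{x,L}_{\Delta,g}$ as a finite union of singular horizontal curves, and in particular it is itself a semianalytic curve.

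Finally, the global conclusion is immediate: $\mathcal{X}^{x}_{\Delta} = \bigcup_{L \in \mathbb{N}} \mathcal{X}^{x,L}_{\Delta,g}$ is a countable union of semianalytic curves, each of Hausdorff dimension $\le 1$, and a countable union of sets of Hausdorff dimension $\le 1$ still has Hausdorff dimension $\le 1$ (countable stability of Hausdorff dimension). This yields the Strong Sard Conjecture in dimension $3$ in the analytic category.

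I expect the main obstacle to be the passage from ``each individual singular curve is semianalytic'' to ``the whole reachable set $\mathcal{X}^{x,L}_{\Delta,g}$ is a \emph{finite} union of such curves'': a priori this set is an uncountable union of trajectories, and one must control the family of singular trajectories uniformly. The difficulty is that different trajectories of the Martinet line field may accumulate on singular points of $\Sigma$ or of $\mathcal{L}$ in complicated ways, and in general a union of integral curves of an analytic line field through a fixed analytic set need not be subanalytic without additional structure. Overcoming this requires the detailed resolution-of-singularities analysis of the Martinet foliation carried out in the body of the paper — controlling how trajectories behave through each chart of the resolution tower, showing that the exceptional behaviour is confined to finitely many points, and that the ``transition maps'' between sheets of trajectories are themselves analytic/subanalytic — so that the length bound $L$ really does cut out a subanalytic, relatively compact family. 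All the genuinely hard work is in those preparatory sections; the argument above is the assembly of those results into the stated theorem.
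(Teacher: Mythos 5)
There is a genuine gap in your proposal, and it sits exactly where you flag the main obstacle. You correctly identify that the hard step is showing $\mathcal{X}^{x,L}_{\Delta,g}$ is a \emph{finite} union of curves, and you correctly note that trajectories can accumulate at singular points of $\Sigma$ or of $\mathcal{L}$ in complicated ways. But your proposed route around this --- that the length bound $L$ ``lifts to a bound on an analytic time function'' and hence cuts out a relatively compact semianalytic subset of the resolution space, whose image is then subanalytic --- does not work and is not the paper's argument. The $g$-length is an integral functional along trajectories; it is not an analytic or semianalytic function of the initial condition, and in the presence of nonhyperbolic or accumulating dynamics the sublevel set $\{\text{length} \le L\}$ of trajectories has no a priori semianalytic structure. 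This is precisely the phenomenon that makes the theorem nontrivial.

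What your sketch is missing is the dichotomy between \emph{characteristic} and \emph{monodromic} transverse-singular trajectories (Definitions \ref{def:conv traj} and \ref{def:SingCharcMonod}) and the two propositions built on it. For characteristic trajectories, Proposition \ref{prop:TopDichSing} gives semianalyticity and finiteness at each of a locally finite set of bad points $\widetilde\Sigma^0$ (this uses that, after resolution, all singularities of the transformed line foliation are saddles, which in turn hinges on the divergence estimate of Proposition \ref{prop:DivProperty} and Lemma \ref{lem:final}). For monodromic trajectories, Proposition \ref{prop:MonoFiniteLenght} shows they have \emph{infinite length}, via a Stokes'-theorem argument on abnormal lifts in $T^*M$ combined with the comparison estimate of Proposition \ref{prop:ComparisonMonodromic} (itself relying on Speissegger's Hardy-field result for transition maps and the Hsiang--Pati bi-Lipschitz description of the pulled-back metric). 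This is what kills the accumulation you worry about: no singular horizontal path of finite length can be monodromic, so only characteristic (hence finitely many, semianalytic) pieces survive. Finally, the paper's finiteness of $\mathcal{X}^{x,L}_{\Delta,g}$ does not come from pushing forward a semianalytic set; it comes from a combinatorial argument: build a locally finite tree whose vertices are the branching points in $\widetilde\Sigma^0 \cup \Sigma^1_{tr}$ encountered by singular paths of length $\le L$, and use K\"onig's Lemma to show that an infinite tree would force the existence of either a finite-length monodromic trajectory (contradicting Proposition \ref{prop:MonoFiniteLenght}) or a trajectory accumulating on $\Sigma^1_{tr}$ (contradicting Lemma \ref{rk:LocalTriviality}). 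Without the monodromic/characteristic split and the infinite-length result, your assembly cannot close; conversely, once you have them, the semianalyticity-of-each-curve part of your write-up is essentially sound.
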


The proof of Theorem \ref{THMSard} uses techniques from resolution of singularities together with analytic arguments. A crucial step in the proof is to show that the so-called monodromic convergent transverse-singular trajectories (see Definitions \ref{def:conv traj} and \ref{def:SingCharcMonod}) necessarily have infinite length and so cannot correspond to singular horizontal paths. This type of trajectories is a generalization of the singular curves which were investigated by the first and fourth authors at the end of the Introduction of \cite{BR}. If for example the Martinet surface $\Sigma$ is stratified by a singleton $\{x\}$, a stratum $\Gamma$ of dimension 1, and two strata of dimension 2 as in Figure \ref{fig1}, then each $z$ in $\Gamma$ gives rise to such a trajectory $\gamma^z$. We note that, if $\gamma^z$ has finite lenght, then it would correspond to a singular horizontal path from $x$ to $z$. In particular, since the area swept out by the curves $\gamma^z$ (as $z$ varies transversally) is $2$-dimensional, if the curves $\gamma^z$ had finite length then the set $\mathcal{X}^x_{\Delta}$ would have dimension 2 and the example of Figure \ref{fig1} would contradict the Sard Conjecture. As we shall see this is not the case since all the curves $\gamma^z$ have infinite length, so they do not correspond to singular trajectories starting from $x$. We note that, in \cite{BR}, the authors had to understand a similar problem and in that case the lengths of the singular trajectories under consideration were controlled ``by hand'' under the assumption that $\Sigma$ were smooth. Here instead, to handle the general case, we combine resolution of singularities together with a regularity result for transition maps by Speissegger (following previous works by  Ilyashenko).  

\begin{figure}\label{fig1}
\begin{center}
\includegraphics[width=3.5cm]{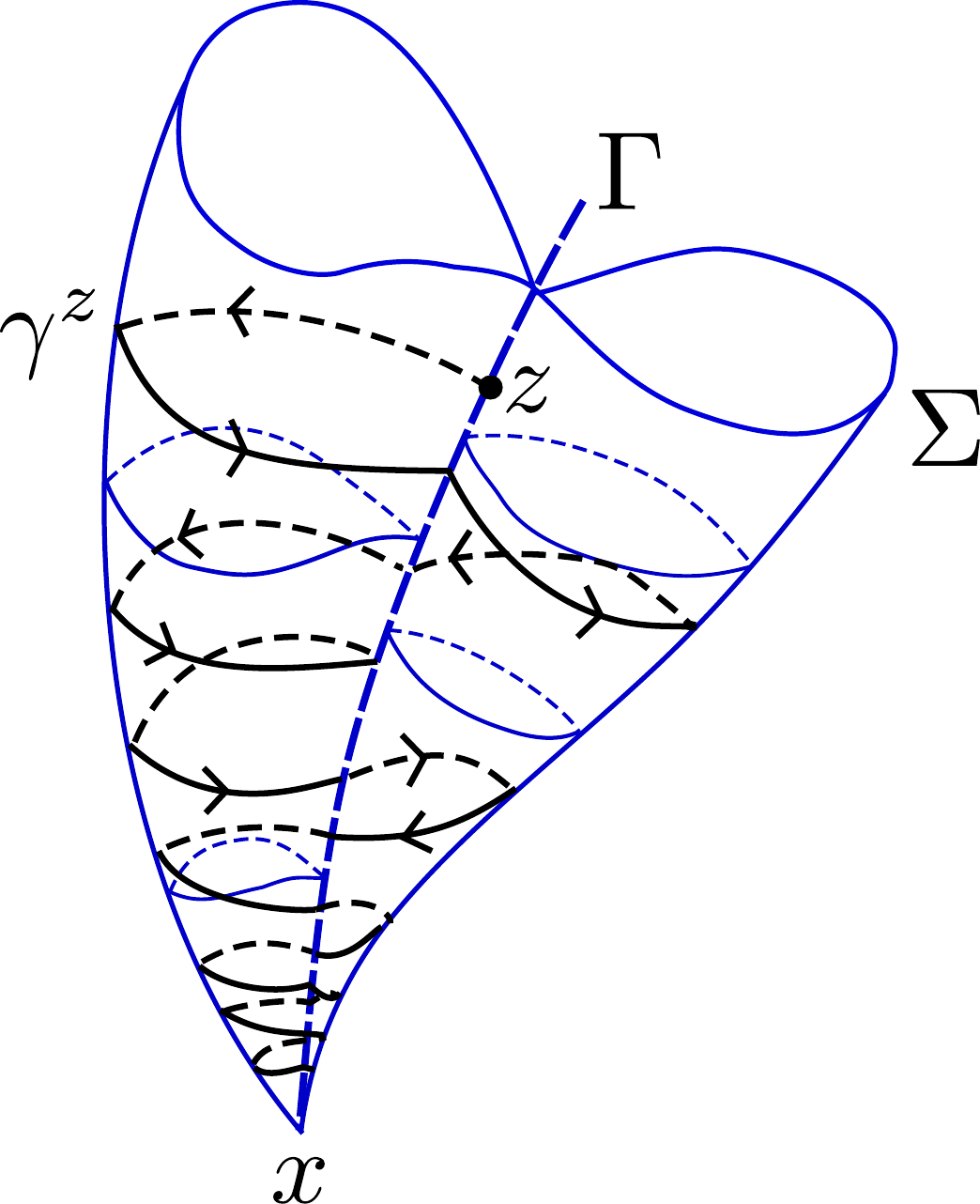}
\caption{Monodromic convergent transverse-singular trajectories}
\end{center}
\end{figure}

Another important step in the proof of Theorem \ref{THMSard} consists in describing the remaining possible singular horizontal paths. We show that the sets $\mathcal{X}^{x,L}_{\Delta,g}$ consist of a finite union of semianalytic curves, which are projections of either characteristic or dicritical orbits of analytic vector fields by an analytic resolution map of the Martinet surface.
A key point in the proof is the fact that the singularities of those vector fields are of saddle type, which holds because of a divergence-type restriction. \\

Theorem \ref{THMSard} allows us to address one of the main open problems in sub-Riemannian geometry, namely the regularity of length-minimizing curves. Given a sub-Riemannian structure $(\Delta,g)$ on $M$, which consists of a totally nonholonomic distribution $\Delta$ and a metric $g$ over $\Delta$, we recall that a {\it minimizing geodesic} from $x$ to $y$ in $M$ is a horizontal path $\gamma:[0,1] \rightarrow M$ which minimizes the energy\footnote{The energy of a horizontal path $\gamma:[0,1] \rightarrow M$ is defined as $\int_0^1 |\dot{\gamma}(t)|^2 \, dt$, where $|\cdot|$ stands for the norm associated to $g$ over $\Delta$.} (and so the length) among all horizontal paths joining $x$ to $y$. It is well-known that minimizing geodesics may be of two types, namely:\\
- either {\em normal},
which means that they are the projection of a trajectory (called normal extremal) of the so-called sub-Riemannian Hamiltonian vector field\footnote{The sub-Riemannian Hamiltonian $H:T^*M \rightarrow \R$ associated with $(\Delta,g)$ in $T^*M$ is defined, in local coordinates, by $$H(x,p):= \max \left\{\frac12
\bigg( \frac{p(v)}{|v|}\bigg)^2 \, \vert \, v \in \Delta_x\setminus \{0\} \right\}\qquad \text{for every $(x,p)$}.
$$
It gives rise to an Hamiltonian vector field, called the sub-Riemannian Hamiltonian vector field, with respect to the canonical symplectic structure on $T^*M$.} in $T^*M$;\\
- or {\em singular}, in which case they are given by the projection of an abnormal extremal (cf. Proposition \ref{PROPsing}).

Note that a geodesic can be both normal and singular. In addition, as shown by Montgomery \cite{montgomery94} in the 1990s, there exist minimizing geodesics which are singular but not normal.  While a normal minimizing geodesic is smooth (being the projection of a trajectory of a smooth dynamical system), a singular minimizing geodesic which is not normal might be nonsmooth. In particular it is widely open whether all singular geodesics (which are always Lipschitz) are of class $C^1$. We refer the reader to \cite{monti14b,riffordbourbaki,Vit} for a general overview on this problem, to \cite{lm08,LLMV13,ty13,monti14a,LLMV15,bcjps18} for some regularity results on singular minimizing geodesics for specific types of sub-Riemannian structures, and to \cite{sussmann15,hl16,mpv16} for partial regularity results for general (possibly analytic) SR structures.

In our setting, the main result of \cite{hl16} can be combined with our previous theorem to obtain the first $C^1$ regularity result for singular minimizing geodesics in arbitrary analytic 3-dimensional sub-Riemannian structures. More precisely, we can prove the following result:

\begin{theorem}\label{THMReg}
Let $M$ be an analytic manifold of dimension $3$, $\Delta$ a rank-two totally nonholonomic analytic distribution on $M$, and $g$ a complete smooth sub-Riemannian metric over $\Delta$. Let $\gamma:[0,1] \rightarrow M$ be a singular minimizing geodesic. Then $\gamma$ is of class $C^1$ on $[0,1]$. Furthermore $\gamma([0,1])$ is semianalytic, and therefore it consists  of finitely many points and finitely many analytic arcs.
\end{theorem}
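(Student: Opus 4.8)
The plan is to deduce Theorem~\ref{THMReg} from Theorem~\ref{THMSard} together with the regularity result of Hakavuori--Le Donne \cite{hl16}. Let $\gamma:[0,1]\to M$ be a singular minimizing geodesic. First I would record the well-known dichotomy (cf. Proposition~\ref{PROPsing}): if $\gamma$ were normal, then it is a projection of a trajectory of the sub-Riemannian Hamiltonian vector field, hence analytic (a trajectory of an analytic vector field on $T^*M$), and all the conclusions would follow trivially; so we may assume $\gamma$ is not normal. By Proposition~\ref{PROPsing} it is then the projection of an abnormal (i.e.\ singular) extremal, so $\gamma$ is a singular horizontal path. Applying the first part of Theorem~\ref{THMSard}, the image $\gamma([0,1])$ is a semianalytic curve in $M$, and by Lemma~\ref{lem:Stratcurve} it admits a stratification into finitely many points and finitely many one-dimensional analytic strata; in particular $\gamma([0,1])$ consists of finitely many points and finitely many analytic arcs. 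This already gives the last two sentences of the theorem.

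The heart of the matter is then the $C^1$ regularity on $[0,1]$. The idea is local and reduces to two cases according to whether a given time $t_0\in[0,1]$ maps to a one-dimensional stratum or to a zero-dimensional stratum of the stratified set $\gamma([0,1])$. If $\gamma(t_0)$ lies in the interior of a one-dimensional analytic arc $C$ of the stratification, then near $t_0$ the curve $\gamma$ is an absolutely continuous, length-minimizing parametrization of a piece of the analytic arc $C$; since a minimizing geodesic has no corners where it is forced to travel along a fixed one-dimensional analytic set (it must traverse the arc monotonically and with constant speed up to reparametrization), $\gamma$ is $C^1$ — indeed analytic, up to reparametrization — in a neighborhood of $t_0$. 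The delicate points are the finitely many times $t_0$ with $\gamma(t_0)$ a zero-dimensional stratum: here $\gamma$ could in principle have a corner. This is exactly where \cite{hl16} enters: the main theorem of Hakavuori--Le Donne asserts that length-minimizers in sub-Riemannian manifolds have no corner-type singularities, i.e.\ at every point the blow-up (tangent) of the minimizer is a line, which combined with the Lipschitz character of $\gamma$ upgrades to one-sided tangency and hence to $C^1$ regularity at $t_0$.

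Concretely, I would argue as follows. Fix $t_0\in[0,1]$ and suppose, for contradiction, that $\dot\gamma$ (which exists a.e.\ and is bounded since $\gamma$ is Lipschitz, being a minimizer) does not admit a limit as $t\to t_0$; equivalently, the left and right metric tangents of $\gamma$ at $t_0$ are not opposite half-lines. By \cite[main theorem]{hl16}, no length-minimizing curve can have such a corner, so we reach a contradiction. Hence the left and right tangent vectors exist and are opposite, and by the standard characterization this means $\dot\gamma$ extends continuously across $t_0$; since $t_0$ was arbitrary, $\gamma\in C^1([0,1])$. One should be slightly careful with the endpoints $t=0,1$ and with a consistent choice of constant-speed parametrization, but these are routine. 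Combining this with the semianalyticity already established gives the full statement.

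The main obstacle, conceptually, is the treatment of the finitely many exceptional times where $\gamma$ passes through a zero-dimensional stratum of its semianalytic image: along a one-dimensional analytic arc the $C^1$ (in fact analytic) behavior of the minimizer is essentially automatic, but ruling out corners at the junction points genuinely requires an external input, namely the no-corners theorem of \cite{hl16}, rather than following from Theorem~\ref{THMSard} alone. Everything else — the normal/singular dichotomy, the identification of singular geodesics with singular horizontal paths, and the passage from ``semianalytic curve'' to ``finitely many points and finitely many analytic arcs'' via Lemma~\ref{lem:Stratcurve} — is a direct assembly of results already available.
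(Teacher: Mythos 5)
Your proof takes essentially the same route as the paper: Theorem~\ref{THMSard} together with Lemma~\ref{lem:Stratcurve} and Remark~\ref{rk:LocalPuiseux}(ii) show that $\gamma([0,1])$ is semianalytic and that $\gamma$ is piecewise analytic, hence one-sidedly differentiable at every interior time, and the Hakavuori--Le Donne theorem then forces the one-sided tangents to match, giving $C^1$ regularity on $[0,1]$. One small caveat: your opening remark that a normal minimizing geodesic would automatically be analytic (so that ``all the conclusions would follow trivially'') is not quite correct here, since $g$ is assumed only smooth, not analytic, so a normal extremal is merely $C^\infty$ and its image need not be semianalytic; fortunately the hypothesis already says $\gamma$ is singular, so this dichotomy is unnecessary and the semianalyticity comes entirely from Theorem~\ref{THMSard}, which depends only on the analyticity of $\Delta$.
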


Theorem \ref{THMReg} follows readily from Theorem \ref{THMSard}, the regularity properties of semianalytic curves recalled in Appendix B, and a breakthrough result of Hakavuori and Le Donne \cite{hl16} on the absence of corner-type singularities of minimizing geodesics. This theorem\footnote{The theorem of Hakavuori and Le Donne \cite[Theorem 1.1]{hl16} is strongly based on a previous result by Leonardi and Monti \cite[Proposition 2.4]{lm08} (see also \cite{LLMV15}) which states that the blow-up of a minimizing geodesic  $\gamma:[0,1] \rightarrow M$ with corner at $t=1/2$ is a broken minimizer made of two half-lines in the tangent Carnot-Carath\'eodory structure at $\gamma(1/2)$. In fact, Proposition 2.4 is not exactly stated in this way in \cite{lm08}. We refer the reader to \cite{mpv17} for a precise statement and a comprehensive and complete proof of \cite[Proposition 2.4]{lm08} as required for the proof of \cite[Theorem 1.1]{hl16}.}  asserts that if $\gamma:[0,1] \rightarrow M$ is a minimizing geodesic which is differentiable from the left and from the right at $t=1/2$ then it is differentiable at $t=1/2$. By Theorem \ref{THMSard} and Lemma \ref{lem:Stratcurve}, if $\gamma:[0,1]\rightarrow M$ is a singular minimizing geodesic, then it is piecewise $C^1$ and so left and right differentiable everywhere\footnote{Except of course at $t=0$ (resp. $t=1$) where $\gamma$ is only right (resp. left) differentiable.} (see Remark \ref{rk:LocalPuiseux} (ii)). Then the main result of \cite{hl16} implies our Theorem \ref{THMReg}.\\

The paper is organized as follows: In Section \ref{Sec:ProofTHMSard}, we introduce some preliminary notions (such as the ones of Martinet surface and characteristic line foliation), and introduce the concepts of characteristic and monodromic transverse-singular trajectories. Section \ref{ProofTHMSard} is devoted to the proof of Theorem \ref{THMSard}, which relies on two fundamental results: first Proposition \ref{prop:TopDichSing}, which provides a clear description of characteristic orbits, and second Proposition  \ref{prop:MonoFiniteLenght}, which asserts that convergent monodromic transverse-singular trajectories have infinite length and so allow us to rule out monodromic horizontal singular paths. The proofs of Proposition \ref{prop:TopDichSing} and of a part of Proposition \ref{prop:MonoFiniteLenght} (namely, Proposition \ref{prop:ComparisonMonodromic}) are postponed to Section \ref{sec:SingCharc}. That  section contains  results on the divergence of vector fields and their singularities, a major theorem on resolution of singularities (Theorem \ref{thm:RSgeneral}), and the proofs mentioned before.  Finally, the four appendices collect some basic results on singular horizontal paths, semianalytic sets, Hardy fields, and resolution of singularities of analytic surfaces and reduction of singularities of planar vector fields.\\

{\bf In the rest of the paper, $M$ is an analytic manifold of dimension $3$, $\Delta$ a rank-two totally nonholonomic analytic distribution on $M$, and $g$ a complete smooth sub-Riemannian metric over $\Delta$.}\\

{\it Acknowledgments:} AF is partially supported by ERC Grant ``Regularity and Stability in Partial Differential Equations (RSPDE)''. AP is partially supported by ANR project LISA (ANR-17-CE40-0023-03). LR is partially supported 
by ANR project SRGI ``Sub-Riemannian Geometry and Interactions'' (ANR-15-CE40-0018).
ABS and AF are thankful for the hospitality of the Laboratoire Dieudonn\'e at the Universit\'e C\^ote d'Azur, where part of this work has been done. We would also like to thank Patrick Speissegger for answering our questions about \cite{speiss2}. 

\section{Characteristic line foliation and singular trajectories}\label{Sec:ProofTHMSard}

\subsection{The Martinet surface}\label{sec:framework}

The {\it Martinet surface} $\Sigma$ associated to $\Delta$ is defined as
$$
\Sigma := \Bigl\{ x\in M \, \vert \, \Delta_x + [\Delta,\Delta]_x\neq T_xM \Bigr\},
$$
where $[\Delta,\Delta]$ is the (possibly singular\footnote{A distribution $\mathcal{D}$ on $M$ is called singular if it does not have constant rank, that is, if the dimension of the vector space $\mathcal{D}_x \subset T_xM$ is not constant.}) distribution given by
$$
[\Delta,\Delta]_x := \Bigl\{ [X,Y](x) \, \vert \, X,Y \mbox{ smooth local sections of } \Delta\Bigr\}.
$$
We recall that the singular curves for $\Delta$ are those horizontal paths which are contained in the Martinet surface $\Sigma$ (see e.g. \cite[Example 1.17 p. 27]{riffordbook}). 

\begin{remark}[Local Model]\label{rk:LocalDescriptMartinet}
Locally, we can always suppose that $M$ coincides with a connected open subset $\mathcal{V}\subset \mathbb{R}^3$, and that $\Delta$ is everywhere generated by global analytic sections. More precisely, we can choose one of the following equivalent formulations:
\begin{itemize}
\item[(i)] $\Delta$ is a totally nonholonomic distribution generated by an analytic 1-form $\delta$ (that is, a section in $ \Omega^1(M)$) and 
\begin{align}\label{eq:martinetequation}
\delta \wedge d\delta =  h \cdot  \omega_M,
\end{align}
where $h$ is an analytic function defined in $M$ whose zero locus defines the Martinet surface (that is, $\Sigma = \{p\in M\,\vert \, h(p)=0\}$) and $\omega_M$ is a local volume form.

\item[(ii)] $\Delta$ is generated by two global analytic vector fields $X^1$ and $X^2$ which satisfy the H\"ormander condition, and $[\Delta,\Delta]$ is generated by $X^1$, $X^2$, and $[X^1,X^2]$. Also, up to using the Flow-box Theorem and taking a linear combination of $X^1$ and $X^2$, we can suppose that
\[
X^1 = \partial_{x_1}, \quad X^2 = \partial_{x_2} + A(x) \, \partial_{x_3}, \quad [X^1,X^2] = A_1(x) \, \partial_{x_3},
\]
where $(x_1,x_2,x_3)$ is a coordinate system on $M$, and $A_1(x) := \partial_{x_1} A(x)$. In this case, the zero locus of $A_1(x)$ defines the Martinet surface (that is, $\Sigma = \{p\in M\,\vert \, A_1(p)=0\}$).
\end{itemize}
\end{remark}

Since $M$ and $\Delta$ are both analytic, the Martinet surface is an analytic set (see e.g. \cite{h73,n66,t71}), and moreover the fact that $\Delta$ is totally nonholonomic implies that $\Sigma$ is a proper subset of $M$ of Hausdorff dimension at most~$2$. Furthermore, we recall that $\Sigma$ admits a global structure of reduced and coherent real-analytic space\footnote{The first author would like to thank Patrick Popescu-Pampu for pointing out that the hypothesis of \cite[Lemma C.1]{BR} is always satisfied in our current framework, that is, when $\Delta$ is non-singular.}, which we denote by $\mathscr{M}$ (see \cite[Lemma C.1]{BR}).

\subsection{Characteristic line foliation}\label{sec:CharacteristicFoliation}

The local models given in Remark \ref{rk:LocalDescriptMartinet} have been explored, for example, in \cite{zz95} and later in \cite[eqs. (2.2) and (3.1)]{BR} in order to construct a locally-defined vector-field whose dynamics characterizes singular horizontal paths at almost every point (cf.  Lemma \ref{lem:CharacteristicFoliation}(ii) below). Since $\Sigma$ admits a global structure of coherent analytic space, these local constructions yield a globally defined (singular\footnote{The foliation $\mathscr{L}$ does not necessarily have rank 1 everywhere, as there may be some points $x\in \Sigma$ where $\mathscr{L}_x=\{0\}$. A point $x$ is called regular if $\mathscr{L}_x$ has dimension 1, and singular if $\mathscr{L}_x=\{0\}$.}) line foliation  $\mathscr{L}$ (in the sense of Baum and Bott \cite[p. 281]{BaumBott}), which we call \emph{characteristic line foliation} (following Zelenko and Zhitomirskii \cite[Section 1.4]{zz95}). More precisely, we have:

\begin{lemma}[Characteristic line foliation]\label{lem:CharacteristicFoliation}
The set 
$$
S := \Bigl\{ p\in \Sigma\,\vert \, p \in Sing(\mathscr{M}) \text{ or }T_p\Sigma \subset \Delta_p\Bigr\}
$$
is analytic of dimension less than or equal to $1$, and there exists a line foliation $\mathscr{L}$ defined over $\Sigma$ such that:
\begin{itemize}
\item[(i)] The line foliation $\mathscr{L}$ is regular everywhere in $\Sigma \setminus S$.
\item[(ii)] If a horizontal path $\gamma :[0,1] \rightarrow M$ is singular with respect to $\Delta$, then its image $\gamma([0,1])$ is contained in $\Sigma$ and it is tangent to $\mathscr{L}$ over $\Sigma \setminus S$, that is 
$$
\gamma(t) \in \Sigma \setminus S\quad \Longrightarrow \quad \dot{\gamma}(t) \in \mathscr{L}_{\gamma(t)} \qquad \mbox{for a.e. } t \in [0,1].
$$
\end{itemize} 
\end{lemma}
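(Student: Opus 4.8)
The plan is to establish Lemma \ref{lem:CharacteristicFoliation} by patching together the known local constructions of the characteristic direction field, using the coherence of the analytic structure $\mathscr{M}$ on $\Sigma$ to glue them into a global line foliation. First I would treat the set $S$: the locus $Sing(\mathscr{M})$ is analytic of dimension $\leq 1$ because $\Sigma$ is a reduced coherent real-analytic surface (so its singular locus has codimension $\geq 1$ in $\Sigma$); the locus $\{p \in \Sigma_{reg} : T_p\Sigma \subset \Delta_p\}$ is analytic since in local coordinates where $\Sigma = \{A_1 = 0\}$ (Remark \ref{rk:LocalDescriptMartinet}(ii)) the condition that the tangent plane to $\Sigma$ contains $\Delta_p = \mathrm{Span}\{\partial_{x_1}, \partial_{x_2} + A\partial_{x_3}\}$ translates into $\partial_{x_1}A_1 = 0$ and $\partial_{x_2}A_1 + A\,\partial_{x_3}A_1 = 0$ on $\Sigma$ — and one checks this proper analytic subset cannot be all of $\Sigma$, since otherwise $\Delta$ would be integrable along a component of $\Sigma$, contradicting the Hörmander condition (this is the same type of argument used in \cite{zz95,BR}). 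Hence $S$ is analytic of dimension $\leq 1$.

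Next I would construct $\mathscr{L}$ locally. On a chart as in Remark \ref{rk:LocalDescriptMartinet}(i), the distribution is $\ker\delta$ with $\delta \wedge d\delta = h\,\omega_M$ and $\Sigma = \{h = 0\}$. The characteristic direction at a point $p \in \Sigma$ is the line $\Delta_p \cap T_p\Sigma$ whenever this intersection is a genuine line (equivalently, $p \notin S$), and more robustly it is defined as the kernel of the restriction $\delta|_{T\Sigma}$; following \cite{zz95} and \cite[eqs. (2.2),(3.1)]{BR} one writes an explicit analytic vector field generating this line on the regular part, e.g. in the coordinates of Remark \ref{rk:LocalDescriptMartinet}(ii) a generator of $\ker(dA_1|_{T\Sigma}) \cap \Delta$. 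The key point is that two such local generators, coming from two overlapping charts, differ by multiplication by a nonvanishing analytic function on the overlap (both span the same line in $T_p\Sigma$ at each regular point, and the coherence of $\mathscr{M}$ ensures this persists analytically across $Sing(\mathscr{M})$ as a line in the sense of Baum–Bott \cite[p. 281]{BaumBott}); therefore the locally defined direction fields glue to a single global singular line foliation $\mathscr{L}$ on $\Sigma$, regular precisely on $\Sigma \setminus S$. This gives item (i).

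For item (ii), I would invoke the characterization of singular horizontal paths recalled after the definition of the Martinet surface: any singular $\gamma$ has image in $\Sigma$. It then remains to show that, at almost every $t$ with $\gamma(t) \in \Sigma \setminus S$, the velocity $\dot\gamma(t)$ lies in $\mathscr{L}_{\gamma(t)}$. This is precisely the local computation from \cite{zz95}/\cite{BR}: writing the abnormal extremal condition via the Pontryagin Maximum Principle (a singular control $u$ admits a nontrivial adjoint lifting $p(t)$ annihilating $\Delta_{\gamma(t)}$ with the abnormal Hamiltonian constraints), one deduces that $\dot\gamma(t) \in \Delta_{\gamma(t)}$ and, differentiating the constraint $\langle p(t), X^i(\gamma(t))\rangle = 0$, that $\dot\gamma(t)$ must also annihilate $dh$ (equivalently lie in $T_{\gamma(t)}\Sigma$), hence $\dot\gamma(t) \in \Delta_{\gamma(t)} \cap T_{\gamma(t)}\Sigma = \mathscr{L}_{\gamma(t)}$ on $\Sigma \setminus S$. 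I would phrase this as a direct citation of the local analysis in \cite{zz95,BR} applied in each chart and then globalized by the gluing above.

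The main obstacle, I expect, is not any single computation but the bookkeeping of the gluing: verifying that the local characteristic vector fields from different papers' normal forms agree up to analytic units on overlaps, and — more delicately — that the line field extends analytically (as a coherent rank-one subsheaf, in the Baum–Bott sense) across $Sing(\mathscr{M})$ rather than merely across the tangency locus $\{T_p\Sigma \subset \Delta_p\}$. This is where the global reduced-coherent structure $\mathscr{M}$ from \cite[Lemma C.1]{BR} does the real work, and I would lean on it to conclude that $\mathscr{L}$ is well-defined on all of $\Sigma$, with singularities exactly along the $1$-dimensional analytic set $S$.
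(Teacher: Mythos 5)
Your proposal is essentially the paper's proof: $S$ has dimension at most $1$ from total nonholonomicity (the paper cites Lemma 2 of \cite{BR} for precisely the argument you sketch), $\mathscr{L}$ is the restriction of the rank-one annihilator sheaf of $\Delta$ to the coherent analytic space $\mathscr{M}$, and (ii) follows from the local formula together with the Hsu/abnormal-extremal characterization of singular paths (Proposition \ref{PROPcharacteristic}). The only streamlining in the paper is that $\mathscr{L}$ is defined in one stroke as the coherent pull-back $i^{*}(\Delta)\subset\Omega^{1}(\mathscr{M})$ along the inclusion $i:\mathscr{M}\to M$ — since pull-backs of coherent sheaves are coherent, the gluing and extension across $\mathrm{Sing}(\mathscr{M})$ that you flag as the ``main obstacle'' is automatic, with no need to verify chart-by-chart agreement of local generators up to units.
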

\begin{proof}[Proof of Lemma \ref{lem:CharacteristicFoliation}]
$S$ is analytic because $\Delta$ and $\Sigma$ are both analytic. The total nonholonomicity of $\Delta$ implies that $S$ has dimension smaller than or equal to $1$, 
see Lemma 2 of \cite{BR}.

Let $i: \mathscr{M} \to M$ be the inclusion. Since $\Delta$ is a coherent sub-sheaf of $\Omega^1(M)$ (cf. Remark \ref{rk:LocalDescriptMartinet}), the pull-back $\mathscr{L} := i^{\ast}(\Delta)$ is also a coherent sub-sheaf of $\Omega^1(\mathscr{M})$. Furthermore, since $\Delta$ is everywhere locally generated by one section, so is $\mathscr{L}$. It is thus enough to study $\mathscr{L}$ locally. 

Fix a point $p \in \Sigma$.  If $\Sigma$ has dimension smaller than or equal to $1$ at $p$,
then $\Sigma = S$ in a neighborhood of $p$ and the claim of lemma holds trivially.  
If $\Sigma$ has dimension $2$ at $p$, then $\mathscr{L}$ generates a line foliation over 
a neighborhood of $p$ in $\Sigma$.

To prove (i)  fix a point $p$ where $\mathscr{M}$ is smooth (in particular, $\Sigma$ is smooth as a subset of $M$) and $\Delta_p + T_p\Sigma = T_pM$. Then there exists a local coordinate system $(x_1,x_2,x_3)$ centered at $p$ so that $\Sigma = \{x_3=0\}$ and $\delta = dx_1 + A(x) dx_2$, therefore $\mathscr{L}$ is regular at $p$.

Finally, assertion (ii) follows from the above formulae in local coordinates and the characterization of singular horizontal paths given in Proposition \ref{PROPcharacteristic}.
\end{proof}

\begin{remark}[Characteristic vector-field]\label{rk:LocalModelFoliation}
We follow \cite[eq. (3.1)]{BR}. In the notation of Remark \ref{rk:LocalDescriptMartinet}(ii), let $h$ be a reduced analytic function whose zero set is equal to the Martinet surface $\Sigma$. 
Consider the vector-field
\[
\mathcal{Z} := X^1(h) X^2 - X^2(h)X^1.
\]
Then the restriction of $\mathcal{Z}$ over $\Sigma$ is a generator of the line foliation $\mathscr{L}$.
\end{remark}

\subsection{Stratification of the Martinet surface}\label{sec:StratMartinet}

By a result of {\L}ojasiewicz \cite{lojasiewicz},
every analytic set $X$ (or an analytic space) admits a semianalytic stratification into non-singular strata.  Each stratum of such stratification is a locally closed analytic submanifold of $X$ and a semianalytic subset of $X$.  Furthermore, it is always possible to choose such stratification  {\em regular}, i.e., that satisfies Whitney regularity conditions, cf.  \cite{lojasiewicz} or \cite{wall}.   For our purpose we need a stratification of the Martinet surface $\Sigma$ 
that, in addition,  is compatible with the distribution $\Delta$ in the sense of the following lemma.  


\begin{lemma}[Stratification of $\Sigma$]\label{lem:StratFol}
There exists a regular semianalytic stratification of $\Sigma$,
$$
\Sigma = \Sigma^0 \cup  \Sigma^1_{tr} \cup \Sigma^1_{tan} \cup  \Sigma^2,
$$
which satisfies the following properties:
\begin{itemize}
\item[(i)] $S = \Sigma^0 \cup  \Sigma^1_{tr} \cup \Sigma^1_{tan}$ (cf. Lemma \ref{lem:CharacteristicFoliation}).
\item[(ii)] $\Sigma^0$ is a locally finite union of points.
\item[(iii)]  $\Sigma^1_{tan}$ is a locally finite union of $1$-dimensional strata with tangent spaces everywhere contained in $\Delta$ (that is, $T_p\Sigma^1_{tan} \subset \Delta_p$ for all $p\in \Sigma^1_{tan}$).
\item[(iv)]  $\Sigma^1_{tr}$ is a locally finite union of $1$-dimensional strata transverse to $\Delta$ (that is, $T_p\Sigma^1_{tr} \oplus \Delta_p = T_pM$ for all $p\in \Sigma^1_{tr}$);
\item[(v)]  $\Sigma^2$ is a locally finite union of $2$-dimensional strata transverse to $\Delta$ (that is, $T_p\Sigma^2 + \Delta_p = T_pM$ for all $p\in \Sigma^2$). 
\end{itemize}
Moreover,   every 1-dimensional stratum $\Gamma$ satisfies the following local triviality property:  For each point $p$ in $\Gamma$ there exists a neighborhood $\mathcal{V} $ of $p$ in $M$ such that  $\Sigma^2 \cap \mathcal V$ is the disjoint union of finitely many 2-dimensional analytic submanifolds  $\Pi_1, \ldots, \Pi_r$ ($\Sigma^2 \cap \mathcal V$ could be empty) such that for each $i$, 
$\Pi_i \cup \Gamma$ is a closed $C^1$-submanifold of $\mathcal V$ with boundary, denoted by $\overline \Pi_i$, with $\Gamma = \partial \overline \Pi_i$. 
\end{lemma}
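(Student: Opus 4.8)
\textbf{Proof proposal for Lemma \ref{lem:StratFol}.}

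The plan is to build the stratification in two stages: first a purely analytic stratification of $\Sigma$ refined so that $S$ is a union of strata, and then a further refinement of the $1$-dimensional strata so that each one is entirely of type ``tangent'' or ``transverse'' to $\Delta$, together with the local triviality of $\Sigma^2$ along them. First I would invoke {\L}ojasiewicz's theorem to obtain a regular (Whitney) semianalytic stratification of the analytic set $\Sigma$. By standard refinement procedures for semianalytic sets (keeping the Whitney conditions, cf. \cite{lojasiewicz,wall}), I can make this stratification compatible with the analytic subset $S\subset \Sigma$ from Lemma \ref{lem:CharacteristicFoliation} and with $\operatorname{Sing}(\mathscr{M})$; since $S$ has dimension $\le 1$ this forces every $2$-dimensional stratum to lie in $\Sigma\setminus S$, so on each such stratum $\mathscr{M}$ is smooth and $T_p\Sigma + \Delta_p = T_pM$ (indeed $\Delta_p \not\subset T_p\Sigma$ there, by definition of $S$), giving (v). The $0$-dimensional strata form $\Sigma^0$, giving (ii) and part of (i).

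Next I refine the $1$-dimensional strata. Each $1$-dimensional stratum $\Gamma$ is a connected analytic curve; the condition $T_p\Gamma \subset \Delta_p$ is the vanishing of an analytic function on $\Gamma$ (pair $\dot\gamma$ with the defining $1$-form $\delta$ of $\Delta$ in a local chart), so either it holds identically on $\Gamma$ or only on a proper analytic — hence locally finite — subset. In the latter case I cut out those finitely many points (adding them to $\Sigma^0$) so that on the remaining open arcs the tangent is everywhere transverse to or everywhere contained in $\Delta$; this produces the decomposition $\Sigma^1_{tan}\cup\Sigma^1_{tr}$ and gives (iii), (iv), and completes (i). Whitney regularity is preserved under deleting points from $1$-strata and adding them as $0$-strata, and all pieces remain semianalytic and locally finite.

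For the local triviality statement along a $1$-stratum $\Gamma$: fix $p\in\Gamma$. By Whitney's condition~(b) together with the fact that $\Sigma$ is $2$-dimensional near $p$ and $\Gamma$ is a boundary-type stratum, a small enough neighborhood $\mathcal V$ of $p$ in $M$ can be chosen so that $\Sigma\cap\mathcal V$ deformation-retracts onto $\Gamma\cap\mathcal V$ and $\Sigma^2\cap\mathcal V$ is a finite disjoint union of connected $2$-dimensional analytic manifolds $\Pi_1,\dots,\Pi_r$, each having $\Gamma\cap\mathcal V$ in its closure. The point is to check that $\overline{\Pi_i}:=\Pi_i\cup(\Gamma\cap\mathcal V)$ is a $C^1$ manifold with boundary $\Gamma$: this follows from the curve selection lemma and the Whitney (b)-regularity of the pair $(\Pi_i,\Gamma)$, which forces the limiting tangent planes of $\Pi_i$ along $\Gamma$ to contain $T\Gamma$ and to depend continuously on the base point, so the one-sided normal bundle extends continuously to $\Gamma$; shrinking $\mathcal V$ if necessary one trivializes $\Pi_i$ as a half-tube over $\Gamma$. (Choosing the stratification to be a Thom–Mather abstract stratified space, or applying Thom's first isotopy lemma to the projection onto a local transversal to $\Gamma$, makes this local product structure precise.)

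I expect the main obstacle to be exactly this last point — upgrading the abstract topological local triviality guaranteed by Whitney stratification theory to the asserted $C^1$-manifold-with-boundary structure of $\overline{\Pi_i}$ with $\partial\overline{\Pi_i}=\Gamma$. The topological statement is classical, but the $C^1$ regularity up to the boundary $\Gamma$ requires genuinely using that the strata are analytic (so that limiting tangent planes exist and vary analytically, not merely continuously) rather than just $C^\infty$; this is where one must be careful, and where invoking the analytic structure of $\mathscr{M}$ and Whitney's lemma for subanalytic sets does the real work. Everything else is routine bookkeeping with semianalytic refinements.
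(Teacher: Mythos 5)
Your construction matches the paper's: you first take a regular semianalytic stratification compatible with $S$ (the paper does this slightly differently, first setting $\Sigma^2 := \Sigma \setminus S$ and then stratifying the $\le 1$-dimensional set $S$, but the outcome is the same), then refine the $1$-strata by the semianalytic condition $T_p\Gamma \subset \Delta_p$, which either holds everywhere or on a locally finite subset, adding those points to $\Sigma^0$. For the local triviality the paper simply cites \cite{wall}, whereas you attempt a sketch via Whitney condition (b) and curve selection.

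One caveat on that sketch: Whitney (b) regularity alone does \emph{not} yield that $\overline{\Pi_i}$ is a $C^1$ manifold with boundary — (b) controls the inclusion of secant lines in limiting tangent planes, but it does not by itself ensure that the tangent plane of $\Pi_i$ has a unique limit along $\Gamma$ nor that a $C^1$ chart up to the boundary exists (the classical counterexamples are smooth spirals accumulating on a line, which are Whitney (b)-regular). The $C^1$-with-boundary statement genuinely uses the semianalytic/subanalytic structure, and the correct reference is precisely what the paper cites, namely \cite{wall} and, for a stronger Puiseux-type form, \cite{pawlucki} as recalled in Remark \ref{rk:LocalTrivialityPuiseux}. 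You do acknowledge at the end that this upgrade is where the analytic structure must enter, so you have identified the right gap, but the phrase ``this follows from the curve selection lemma and the Whitney (b)-regularity'' overstates what those two ingredients give; it would be better to directly invoke the Pawłucki/Wall result here, as the paper does.
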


\begin{proof}[Proof of Lemma \ref{lem:StratFol}]
By Lemma \ref{lem:CharacteristicFoliation}, the set $\Sigma^2:= \Sigma \setminus S$ is smooth and $\mathscr{L}$ is non-singular everywhere over it. Now, we recall that $S$ is an analytic set of dimension at most $1$, so it admits a semianalytic stratification $S_0 \cup S_1$, where $S_0$ is a locally finite union of points and $S_1$ is a locally finite union of (open) analytic curves. Moreover, by \cite{lojasiewicz} or \cite{wall}, we may assume that $\Sigma ^2$, $S_1$, and $S_0$ is a regular stratification of 
$\Sigma$.  

Fixed a $1$-dimensional stratum $\Gamma$ in $S_1$, its closure $\overline{\Gamma}$ is a closed semianalytic set. The condition $T_p\Gamma \subset \Delta_p$ 
is semianalytic (given, locally, in terms of analytic equations and inequalities). Therefore, up to removing from $\Gamma$ a locally finite number of points, 
we can assume that:\\
- either $\Delta_p$ contains $T_p{\Gamma}$ for every $p\in \Gamma$;\\
- or $\Delta_p$ transverse 
to  $T_p\Gamma$ for every $p\in \Gamma$.\\
In other words, up to adding a locally finite union of points to $S_0$, we can suppose that the above dichotomy is constant along connected components of $S_1$. Then, it suffices to denote by $\Sigma^1_{tan}$ the subset of $S_1$ consisting of all connected components where $T_p\Gamma \subset \Delta_p$ for every point $p\in \Gamma$, and by $\Sigma^1_{tr}$ the subset of all connected components where the transversality condition  $\Delta_p \oplus T_p\Gamma = T_p M$ holds.  

The last claim of Lemma follows from \cite{wall}.
\end{proof}

\begin{remark}[Puiseux with parameter]\label{rk:LocalTrivialityPuiseux}
As follows from \cite[Proposition 2]{pawlucki} or  \cite{wall} (proof of Proposition p.342), we may require in Lemma \ref{lem:StratFol} the following stronger version of local triviality of $\Sigma$ along $\Gamma$.  Given 
$p \in \Gamma$,  there exist a positive integer $k$ and  a local system of analytic coordinates 
$x=(x_1,x_2,x_3)$ at $p$ such that $\Gamma = \{x_2=x_3=0\}$ and each $\Pi_i$ is the graph $x_3= \varphi_i (x_1,x_2)$, defined locally on $\{(x_1,x_2)\, \vert \, x_2\ge 0 \}$ (or 
$\{(x_1,x_2)\, \vert \, x_2\le 0 \}$), such that $\varphi_i $ is $C^1$ and the mapping $(t,x_1)\mapsto \varphi_i  (x_1, t^k)$ is analytic.\\
One may remark that the latter two conditions imply that, in fact, $\varphi_i$ is of class $C^{1,1/k}$.  Indeed,  we may write  for $x_2\ge 0$
$$
\varphi_i (x_1,x_2) = \sum _{i \in \N,j\in \N}  a_{i,j} x_1^i x_2 ^{j/k}.
$$
The fact that $\varphi_i $ is $C^1$ implies that in this sum $j=0$ or $j\ge k$.  
Therefore the derivative $\partial \varphi_i/ \partial x_2$ is H\"older continuous with exponent 
$1/k$.
\end{remark}

By the local triviality property stated in Lemma \ref{lem:StratFol} and by Remark \ref{rk:LocalTrivialityPuiseux}, the restriction of $\Delta$ to a neighborhood of a point of $\Sigma^1_{tr}$ satisfies the following property (we recall that $M$ is equipped with a metric $g$):

\begin{figure}\label{fig2}
\begin{center}
\includegraphics[width=4.2cm]{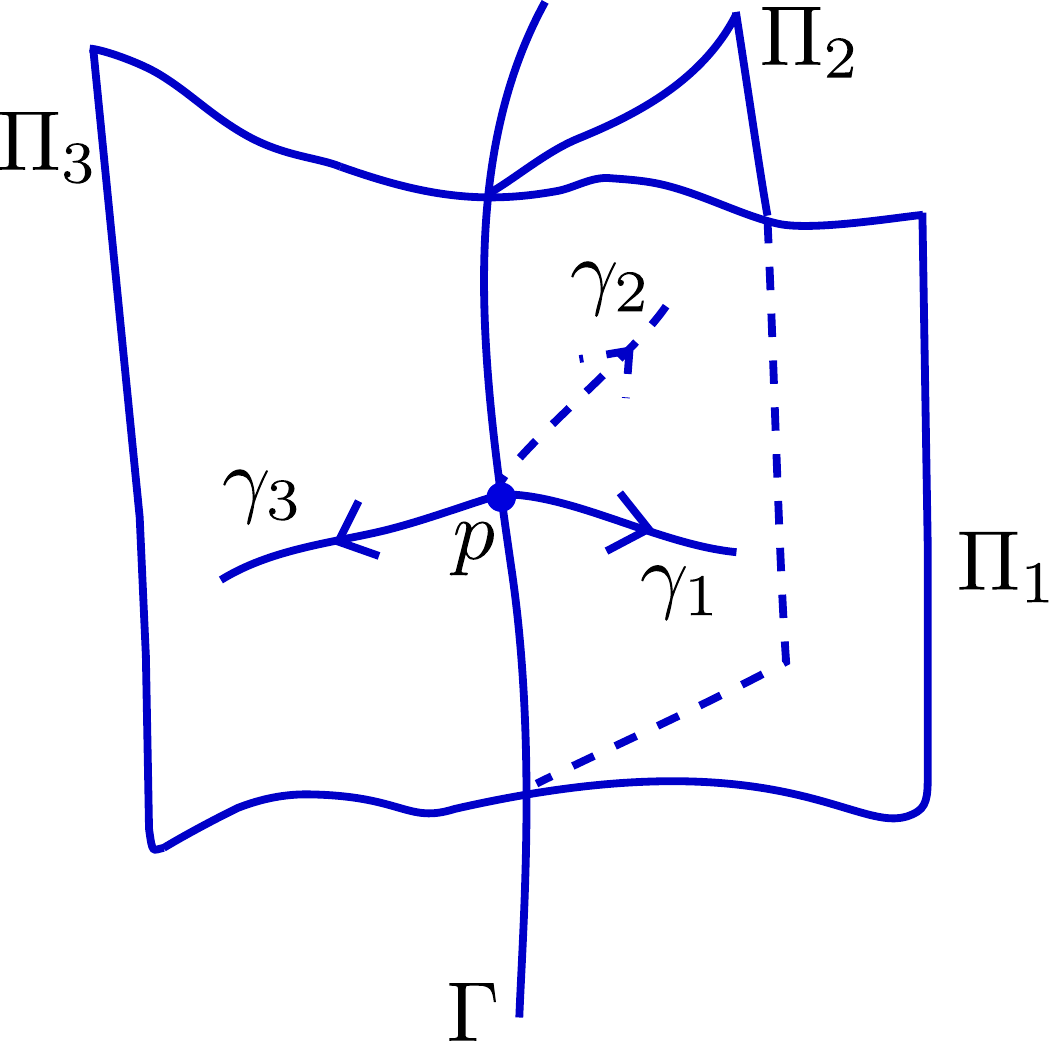}
\caption{$\mathcal{X}^{p,\epsilon}_{\Delta,g}$ for $p\in \Sigma_{tr}^1$ and $\epsilon>0$ small}
\end{center}
\end{figure}

\begin{lemma}[Local triviality of $\Delta$ along $\Sigma_{tr}^1$]\label{rk:LocalTriviality}
Let $\Gamma$ be a $1$-dimensional stratum in $\Sigma^1_{tr}$ and  let $p \in \Gamma$ be fixed.  Then the following properties hold:
\begin{itemize}
	\item[(i)] There exists a neighborhood $\mathcal{V}$ of $p$ and $\delta>0$ such that, for every point $q\in \mathcal{V} \cap \Sigma_{tr}^1$ and every injective singular horizontal path $\gamma:[0,1]\to \Sigma$ such that $\gamma(0)=q$, $\gamma(1) \in \Sigma_{tr}^1$, and $\gamma((0,1)) \subset \Sigma^2$, the length of $\gamma$ is larger than $\delta$.
\item[(ii)] The image of a singular horizontal path $\gamma: [0,1] \to M$ such that $\gamma([0,1)) \subset \Sigma^{2}$ and $\gamma(1)\in \Sigma_{tr}^1$  is semianalytic.
\end{itemize}
In particular, if $\mathcal{V} $ is a neighborhood of $p$ in $M$ such that  $\Sigma^2 \cap \mathcal V$ is the disjoint union of the 2-dimensional analytic submanifolds  $\Pi_1, \ldots, \Pi_r$ as in Lemma \ref{lem:StratFol}, then  for $\epsilon>0$ small enough there are singular horizontal paths $\gamma_1, \ldots, \gamma_r :[0,1] \rightarrow \Sigma$ with $\gamma_i(0)=p$ and $\gamma_i((0,1]) \subset \Pi_i$ for $i=1, \ldots,r$ such that (see Figure 2)
$$
\mathcal{X}^{p,\epsilon}_{\Delta,g} = \bigcup_{i=1}^r \gamma_i\left( [0,1]\right),
$$
where $\mathcal{X}^{p,\epsilon}_{\Delta,g}$ is defined in \eqref{eq:XL}.
\end{lemma}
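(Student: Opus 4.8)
\textbf{Proof proposal for Lemma \ref{rk:LocalTriviality}.}

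The plan is to work in the local coordinates provided by Remark \ref{rk:LocalTrivialityPuiseux}, where $\Gamma = \{x_2 = x_3 = 0\}$, the metric $g$ is comparable to the Euclidean one on a small neighborhood $\mathcal{V}$, and each sheet of $\Sigma^2 \cap \mathcal{V}$ is a graph $x_3 = \varphi_i(x_1, x_2)$ over a half-plane $\{x_2 \geq 0\}$ (or $\{x_2 \leq 0\}$), with $\varphi_i$ of class $C^{1,1/k}$ and $(t,x_1) \mapsto \varphi_i(x_1, t^k)$ analytic. Since $\Gamma \subset \Sigma^1_{tr}$ is transverse to $\Delta$, after a further linear change of coordinates we may assume $\Delta_p$ is spanned by $\partial_{x_1}$ and $\partial_{x_2}$ at $p$; by continuity $\Delta$ stays uniformly transverse to the $x_3$-direction on $\mathcal{V}$, so on each sheet $\Pi_i$ the characteristic line foliation $\mathscr{L}$ (which is regular on $\Sigma^2$ by Lemma \ref{lem:CharacteristicFoliation}(i)) pulls back, via the graph parametrization, to a line foliation on the half-plane $\{x_2 \geq 0\}$ generated by an analytic (in the variables $(x_1, t)$ with $x_2 = t^k$) vector field. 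The first step is therefore to pass, on each sheet, from the $(x_1, x_2)$-chart to the $(x_1, t)$-chart, where the pulled-back characteristic vector field $\mathcal{Z}_i$ becomes analytic and nonvanishing (away from a lower-dimensional locus).

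For part (i): the key point is that along $\Gamma$ the characteristic foliation $\mathscr{L}$ restricted to $\Gamma$ is tangent to $\Gamma$ itself — indeed $\mathcal{Z}$ of Remark \ref{rk:LocalModelFoliation} restricted to $\Sigma$ generates $\mathscr{L}$, and since $\Gamma \subset \Sigma$ the line $\mathscr{L}_q \subset T_q\Gamma$ whenever $\mathscr{L}_q$ is defined and $q \in \Gamma$ — actually the decisive fact is the opposite: because the sheets $\Pi_i$ meet $\Gamma$ transversally (in the $x_2$-direction), a trajectory of $\mathscr{L}$ inside $\Pi_i$ that reaches $\Gamma$ must cross from $\{x_2 > 0\}$ to $\{x_2 = 0\}$, and in the $(x_1, t)$-chart the analytic vector field $\mathcal{Z}_i$ is transverse to $\{t = 0\}$ (equivalently $\{x_2 = 0\}$) except possibly at finitely many points of $\Gamma \cap \mathcal{V}$, which we exclude by shrinking $\mathcal{V}$ around the fixed $p$. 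Hence near $p$ every trajectory of $\mathscr{L}$ that starts on $\Sigma^1_{tr} \cap \mathcal{V}$, stays in $\Sigma^2$ on $(0,1)$, and returns to $\Sigma^1_{tr}$ must leave and re-enter a fixed small Euclidean ball, travelling a $g$-length bounded below by a fixed $\delta > 0$; a singular horizontal path as in the statement is, by Lemma \ref{lem:CharacteristicFoliation}(ii), an arc of such a trajectory, so its length exceeds $\delta$. Actually, to be careful, one should first reduce to the case where $\mathscr{L}$ is transverse to $\{x_2=0\}$ at $p$: if instead $\mathscr{L}_p$ is tangent to $\Gamma$, one uses that $\Gamma$ itself is a trajectory of $\mathscr{L}$ and that nearby trajectories in $\Sigma^2$ that return to $\Gamma$ must spiral, again giving a uniform length lower bound from the analytic structure; the clean way is to invoke Remark \ref{rk:LocalTrivialityPuiseux} together with the analyticity of $\mathcal{Z}_i$ to get a monodromy/transition-map description and then conclude as in the monodromic case, but since we only need a crude lower bound the transversality argument after shrinking $\mathcal{V}$ suffices in the generic subcase and the tangent subcase is handled by the fact that the trajectory then coincides with $\Gamma$ on an interval — contradicting injectivity together with $\gamma((0,1)) \subset \Sigma^2$ unless it detaches, which again costs length.

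For part (ii): by Lemma \ref{lem:CharacteristicFoliation}(ii) the image $\gamma([0,1])$ is, up to the single endpoint $\gamma(1) \in \Sigma^1_{tr}$, a trajectory of the regular analytic line foliation $\mathscr{L}$ on one sheet $\Pi_i$; pulling back to the $(x_1,t)$-chart via the parametrization of Remark \ref{rk:LocalTrivialityPuiseux}, it becomes a trajectory of an analytic vector field, hence a semianalytic arc in that chart (trajectories of analytic vector fields are locally analytic curves, and globally — on a relatively compact piece — semianalytic, cf. the theory recalled in Appendix \ref{App:semianalytic}), and its closure including the limit point on $\{t=0\}$ remains semianalytic; pushing forward by the analytic map $(x_1, t) \mapsto (x_1, t^k, \varphi_i(x_1, t^k))$ preserves semianalyticity, so $\gamma([0,1])$ is semianalytic in $M$. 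Finally, the concluding "in particular" statement: shrinking $\mathcal{V}$ and choosing $\epsilon > 0$ so that the $g$-ball $B_g(p,\epsilon) \subset \mathcal{V}$ is smaller than the $\delta$ of part (i), every singular horizontal path starting at $p$ and staying in $\mathcal{X}^{p,\epsilon}_{\Delta,g}$ cannot return to $\Sigma^1_{tr}$ (it would have length $> \delta > \epsilon$), nor can it hit $\Sigma^0$ or $\Sigma^1_{tan}$ by a similar local-finiteness-plus-length argument (or because $\mathscr{L}$ tangent to $\Gamma$ there again forces it onto a single analytic curve), so after entering $\Sigma^2$ it stays on a single sheet $\Pi_i$; the characteristic foliation on $\Pi_i$, being regular and emanating from the boundary point $p$, has through $p$ a unique trajectory $\gamma_i$ (in the $(x_1,t)$-chart this is the trajectory of the analytic vector field $\mathcal{Z}_i$ through the boundary point, which exists and is unique by transversality or by the analytic normal-form), and $\gamma_i((0,1]) \subset \Pi_i$. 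Therefore $\mathcal{X}^{p,\epsilon}_{\Delta,g} = \bigcup_{i=1}^r \gamma_i([0,1])$, each $\gamma_i([0,1])$ semianalytic by part (ii).

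\textbf{Main obstacle.} I expect the delicate point to be part (i) in the subcase where $\mathscr{L}_p$ is tangent to $\Gamma$ at the chosen point $p$: there the naive transversality argument fails, and one genuinely needs the analytic structure of the pulled-back vector field $\mathcal{Z}_i$ in the $(x_1,t)$-chart — reduction of singularities of planar analytic vector fields (Appendix, and the tools behind Proposition \ref{prop:TopDichSing}) — to see that a trajectory inside $\Sigma^2$ that limits onto $\Gamma$ either coincides with $\Gamma$ (excluded by the hypotheses $\gamma$ injective and $\gamma((0,1)) \subset \Sigma^2$) or makes infinitely many turns / stays at bounded-below Euclidean distance on a macroscopic portion, in either case forcing $\length(\gamma) > \delta$ for a uniform $\delta$. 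Making this quantitative and uniform over $q \in \mathcal{V} \cap \Sigma^1_{tr}$ — rather than just for the fixed $p$ — is where care is required, and is presumably where the local triviality with parameter (Remark \ref{rk:LocalTrivialityPuiseux}) does the real work, since it trivializes the family of sheets and hence the family of pulled-back foliations over $\Gamma$.
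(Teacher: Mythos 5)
Your overall plan — pull $\delta$ back to the $(x_1,t)$-chart of Remark \ref{rk:LocalTrivialityPuiseux}, where it becomes an analytic $1$-form $a(x_1,t)\,dx_1+b(x_1,t)\,dt$ on each sheet, and exploit transversality of the associated line field to $\{t=0\}$ — is exactly the paper's argument, and in the generic subcase you do reach the right conclusion. But you never make the one observation that the whole proof hinges on, and because of that you manufacture a ``tangent subcase'' that does not exist and spend your entire final paragraph declaring it the main obstacle. By the very definition of $\Sigma^1_{tr}$ in Lemma \ref{lem:StratFol}(iv), one has $\Delta_q\oplus T_q\Gamma=T_qM$ for \emph{every} $q\in\Gamma$, hence $T_q\Gamma\cap\Delta_q=\{0\}$, i.e. $\delta_q(\partial_{x_1})\neq 0$ along all of $\Gamma$. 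Since $\varphi_i(x_1,0)\equiv 0$ forces $\partial_{x_1}\varphi_i(x_1,0)=0$, the pulled-back coefficient satisfies $a(x_1,0)=\delta_{\Phi_i(x_1,0)}(\partial_{x_1})\neq 0$ for all $x_1$ near $0$, and by continuity $a\neq 0$ on a full neighborhood of $(0,0)$. Consequently $t$ is strictly monotone along trajectories of $b\partial_{x_1}-a\partial_t$: there is no tangent subcase, no ``finitely many exceptional points of $\Gamma$'' to shrink away (your hedge ``except possibly at finitely many points'' is wrong — transversality holds at every point of $\Gamma$, it is not a generic property), and no need for reduction of singularities or spiralling estimates. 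Parts (i), (ii), and the ``in particular'' follow at once. Missing this is not a stylistic quibble: it means you never used the hypothesis $\Gamma\subset\Sigma^1_{tr}$, which is the entire content of the lemma.

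Two further slips worth fixing: your opening claim that ``along $\Gamma$ the characteristic foliation $\mathscr{L}$ restricted to $\Gamma$ is tangent to $\Gamma$ itself'' is literally false on $\Sigma^1_{tr}$ (it is exactly what the $\Sigma^1_{tr}$ hypothesis excludes; it is the defining property of $\Sigma^1_{tan}$), and the self-correction that follows does not cleanly retract it. And ``the sheets $\Pi_i$ meet $\Gamma$ transversally (in the $x_2$-direction)'' confuses two different transversalities: the sheets have $\Gamma$ as boundary and are tangent to it (each $\overline\Pi_i$ is a $C^1$ manifold with boundary $\Gamma$); what is transverse to $\Gamma$ is the distribution $\Delta$, and the transversality of the \emph{foliation} $\mathscr{L}$ to $\Gamma$ on each sheet is a consequence of that, via $a\neq 0$. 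Once these points are straightened out, your generic-case paragraph becomes essentially the paper's proof.
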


\begin{proof}[Proof of Lemma \ref{rk:LocalTriviality}]
The lemma follows readily from the following observation.  
Let $x=(x_1,x_2,x_3)$ denote the system of coordinates at $p$ introduced in  
Remark \ref{rk:LocalTrivialityPuiseux}.  Suppose that the distribution $\Delta$ is locally defined by the $1$-form 
$\delta$ as in Remark \ref{rk:LocalDescriptMartinet}.  Then the  pull-back of $\delta$ on $\Pi_i$ by the map $(x_1,t) \to (x_1,t^k, \varphi_i (x_1, t))$ is an analytic $1$-form: 
$\delta_i= a(x_1,t) dx_1 + b(x_1,t) dt$.  The condition of transversality of $\Delta$ and $\Gamma$ at $p$ means $a(0,0) \ne 0$ and therefore the integral curves of $\Delta_i$, 
that is the singular horizontal path of $\Delta$, are uniformly transverse to $\Gamma$ 
in a neighborhood of $p$.
\end{proof}

It remains now to introduce some definitions related to singular horizontal paths or more precisely singular trajectories (i.e., trajectories of the characteristic line foliation) converging to the set 
\begin{equation}
\label{eq:tilde Sigma}
\widetilde{\Sigma} := \Sigma^0 \cup \Sigma^1_{tan}.
\end{equation} 
This is the purpose of the next section.

\subsection{Characteristic and monodromic transverse-singular trajectories}\label{SECts}

We restrict our attention to a special type of trajectories of the characteristic foliation $\mathscr{L}$. 

\begin{definition}[Convergent transverse-singular trajectory]
\label{def:conv traj}
We call \emph{transverse-singular trajectory} any absolutely continuous path $\gamma:[0,1) \rightarrow \Sigma$  such that
$$
\dot{\gamma}(t) \in \mathscr{L}_{\gamma(t)} \qquad \mbox{for a.e. } t \in [0,1),
$$
and 
$$
\gamma(t) \in \Sigma^2 \cup \Sigma^1_{tr} \qquad \forall\,t \in [0,1).
$$
Moreover, we say that $\gamma$ is  \emph{convergent} if it admits a limit as $t$ tends to $1$. 
\end{definition}

We are going to introduce a dichotomy between two types of convergent transverse-singular trajectories which is inspired by the following well-known result (see \cite[Theorem 9.13]{IY} and \cite[Definitions 9.4 and 9.6]{IY}):

\begin{proposition}[Topological dichotomy for planar analytic vector-fields]\label{prop:TopDichPlanar}
Let $\mathcal{Z}$ be an analytic vector field defined over an open neighborhood $U$ of 
the origin $0$ in $\mathbb{R}^2$, and suppose that $0$ is a singular point 
of $\mathcal{Z}$. Given a regular orbit $\gamma(t)$ of $\mathcal{Z}$ converging to $0$, then:
\begin{itemize}
\item[(i)] either $\gamma$ is a \emph{characteristic orbit}, that is, the secant curve $\psi(t) := \gamma(t) /|\gamma(t)| \in \mathbb{S}^{1}$ has a unique limit point;
\item[(ii)] or $\gamma$ is a \emph{monodromic orbit}, that is, there exists an analytic section $\Lambda$ of the vector-field $\mathcal{Z}$ at $0$\footnote{In other words, $\Lambda$ is a connected segment whose boundary contains $0$ and the vector field $\mathcal{Z}$ is transverse to $\Lambda$ everywhere out of $0$.} such that $\gamma \cap \Lambda$ is the disjoint union of an infinite number of points. 
\end{itemize}
\end{proposition}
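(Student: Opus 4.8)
The plan is to reduce the statement to a standard result from the theory of monodromic singularities of planar analytic vector fields, essentially repackaging \cite[Theorem 9.13]{IY} and the surrounding definitions in \cite{IY}. First I would recall the precise dichotomy as established by Il'yashenko and Yakovenko: given an analytic vector field $\mathcal Z$ on a neighborhood $U$ of $0 \in \mathbb{R}^2$ with $0$ an isolated singular point, one performs a finite sequence of blow-ups (the resolution of singularities of the vector field, as in the appendix on reduction of singularities of planar vector fields) to obtain a vector field whose singular points on the exceptional divisor are all elementary. The behaviour of an orbit $\gamma(t)$ converging to $0$ is then governed by how its lift interacts with the exceptional divisor: either the lift converges to a singular point on the divisor with a well-defined tangent direction, or the lift is nowhere tangent to a singular point and the orbit must spiral, accumulating on the whole divisor.

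The key steps, in order, would be: (1) State that since $0$ is an isolated singularity, after a finite composition of blow-ups $\pi$ the pulled-back (and possibly desingularized by dividing out the exceptional divisor) vector field has only elementary singularities along $\pi^{-1}(0)$; this is the content of the reduction theorem recalled in the appendix. (2) Lift the orbit $\gamma(t)$ to $\tilde\gamma(t)$ in the blown-up space; since $\gamma$ converges to $0$, its lift $\tilde\gamma$ accumulates on the compact connected exceptional divisor $D = \pi^{-1}(0)$. (3) Invoke the trichotomy of possible $\omega$-limit behaviours of orbits of a vector field with elementary singularities near an invariant compact set: either $\tilde\gamma$ converges to one of the finitely many singular points of $D$, or its $\omega$-limit set is the union of some singular points connected by separatrices (a polycycle), or it is the entire divisor together with periodic behaviour. (4) In the first case, an orbit converging to an elementary singular point has a well-defined tangent line at that point (an elementary singularity has a finite set of characteristic directions and a convergent orbit enters along one of them), so pushing forward by $\pi$ the secant $\psi(t)=\gamma(t)/|\gamma(t)|$ has a unique limit — this gives case (i), the characteristic orbit. (5) In the remaining cases the orbit crosses a fixed analytic transversal infinitely many times: for a polycycle this is because the orbit must pass near each saddle and the return behaviour forces infinitely many crossings of a cross-section near one of the saddle connections, and for the divisor-filling case it is immediate; in either situation one produces an analytic section $\Lambda$ of $\mathcal Z$ at $0$ (the pushforward of a suitable arc transverse to $\tilde\gamma$) meeting $\gamma$ in infinitely many points — this gives case (ii), the monodromic orbit. (6) Finally, observe that the two alternatives are mutually exclusive: a characteristic orbit has secant converging to a single direction, hence eventually stays in a narrow sector and can cross a radial-type analytic transversal at most finitely often.

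The main obstacle — or rather the only delicate point, since this is a cited result presented for motivation — is step (5): showing that in the non-characteristic case one genuinely obtains an analytic transversal arc emanating from $0$ (not merely a smooth one, and not merely a transversal that avoids a neighborhood of $0$) that is crossed infinitely often. This is where one must use that $\mathcal Z$ is analytic, so that the separatrices of the elementary singularities on the divisor and the flow near them have enough structure (formal/analytic normal forms for elementary singularities, Dulac-type estimates on transition maps) to construct such an arc and descend it to $U$ via $\pi$. I would simply cite \cite[Theorem 9.13, Definitions 9.4 and 9.6]{IY} for the full argument, noting that all the ingredients — resolution of planar vector field singularities, elementary normal forms, and the structure of $\omega$-limit sets — are exactly those recalled in the appendices, and that no hypothesis beyond analyticity and isolatedness of the singularity is needed.
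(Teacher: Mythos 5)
The paper gives no proof of Proposition~\ref{prop:TopDichPlanar}: it is introduced as a ``well-known result'' purely for motivation, with the reader sent directly to \cite[Theorem~9.13, Definitions~9.4 and~9.6]{IY}; the first genuine proof in the paper is for the generalization, Proposition~\ref{prop:TopDichSing}. Your outline — perform a finite Bendixson--Seidenberg reduction, lift $\gamma$, classify the $\omega$-limit of the lift on the exceptional divisor, and read off characteristic vs.\ monodromic behavior — is a faithful paraphrase of the argument behind that citation, and you explicitly end by deferring to \cite{IY} for the details. So your treatment is essentially the same as the paper's.

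Two small remarks, neither of which is a genuine gap given that the result is simply cited. First, your sketch tacitly assumes $0$ is an \emph{isolated} singular point (otherwise the reduction of singularities does not terminate at elementary points in the naive way); the statement of the proposition does not say so, but this is implicit in the intended use and in \cite{IY} as well. Second, the paper's Remark~\ref{rk:ProofThmTopDichSing}(i) observes that Proposition~\ref{prop:TopDichPlanar} admits more elementary proofs that exploit the topological simplicity of the plane, whereas your route goes through resolution of singularities; but since the paper itself points to the resolution-based proof in \cite{IY} and needs that machinery anyway for Proposition~\ref{prop:TopDichSing}, this is a matter of emphasis rather than a genuine divergence.
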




Here is our definition. 

\begin{definition}[Characteristic and monodromic convergent transverse-singular trajectories]\label{def:SingCharcMonod}
Let  $\gamma: [0,1) \rightarrow \Sigma$ be a convergent transverse-singular trajectory such that $\bar{y}:=\lim_{t\rightarrow 1} \gamma(t)$ belongs to $\widetilde{\Sigma}$ (see \eqref{eq:tilde Sigma}). Then we say that:
\begin{itemize}
\item[(i)] 
$\gamma$ is \emph{monodromic} if there exists a section $\Lambda \subset \Sigma$ of $\mathscr{L}$ at $\bar{y}$\footnote{That is, $\Lambda$ is a connected 1-dimensional semianalytic manifold with boundary contained in $\Sigma$, whose boundary contains $\bar{y}$ and such that $\Lambda \setminus\{\bar{y}\} \subset \Sigma^2\cup \Sigma^1_{tr}$ is everywhere transverse to $\mathscr{L}$.} such that $\gamma([0,1)) \cap \Lambda$ is the disjoint union of infinitely many points. In addition, we say that $\gamma$ is \emph{final} if $\gamma([0,1)) \cap \Sigma^1_{tr}$ is empty or infinite. In the latter case, we may choose as $\Lambda$ a branch of $\Sigma^1_{tr}$.
\item[(ii)] 
$\gamma$ is \emph{characteristic} if it is not monodromic.
\end{itemize}
\end{definition}

From now on, we call monodromic  (resp. characteristic) trajectory any convergent transverse-singular trajectory with a limit in $\widetilde{\Sigma}$ which is monodromic (resp. characteristic). The next section is devoted to the study of characteristic and monodromic trajectories, and to the proof of Theorem \ref{THMSard}.



\section{Proof of Theorem \ref{THMSard}}\label{ProofTHMSard}

The proof of Theorem \ref{THMSard} proceeds in three steps. Firstly, we describe some properties of regularity and finiteness satisfied by the characteristic trajectories. Secondly, we rule out monodromic trajectories as possible horizontal paths starting from the limit point. Finally, combining all together, we are able to describe precisely the singular horizontal curves and the sets of the form  $\mathcal{X}^{x,L}_{\Delta,g}$ (see \eqref{eq:XL}). 

\subsection{Description of characteristic trajectories}

The following result is  a consequence of the results on resolution of singularities stated in Theorem \ref{thm:RSgeneral} and the fact that the characteristic trajectories correspond, in the resolution space, to characteristics of an analytic vector field with singularities of saddle type. 

\begin{proposition}\label{prop:TopDichSing}
Let $\Sigma^0$ and $\widetilde \Sigma$ be as in Lemma \ref{lem:StratFol} and \eqref{eq:tilde Sigma}.
There exist a locally finite set of points $\widetilde{\Sigma}^0$, with $\Sigma^0 \subset \widetilde{\Sigma}^0\subset \widetilde{\Sigma}$, such that the following properties hold: 
\begin{itemize}
\item[(i)] If  $\gamma: [0,1) \rightarrow \Sigma$ is a convergent transverse-singular trajectory such that $\bar{y}:=\lim_{t\rightarrow 1} \gamma(t)$ belongs to $\widetilde{\Sigma}$ 
then $\bar{y}$ belongs to $ \widetilde{\Sigma}^0$.  Moreover, if $\gamma$ is characteristic then $\gamma ([0,1))$ is 
semianalytic  and there is  $\bar{t} \in [0,1)$ such that $\gamma([\bar{t},1)) \subset \Sigma^2$.
\item[(ii)] For every $\bar{y} \in \widetilde{\Sigma}^{0}$ there exists only finitely many (possibly zero) characteristic  trajectories converging to $\bar{y}$ and all of them are semianalytic curves.
\end{itemize}
\end{proposition}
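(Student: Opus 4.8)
The plan is to reduce the study of the characteristic line foliation $\mathscr{L}$ near the singular locus $\widetilde\Sigma$ to the study of a planar analytic vector field with controlled singularities via resolution of singularities, and then to import the classical dichotomy of Proposition \ref{prop:TopDichPlanar} together with the saddle-type structure of the resolved singularities. Since $\widetilde\Sigma = \Sigma^0 \cup \Sigma^1_{tan}$ and, by Lemma \ref{lem:StratFol}, $\Sigma^1_{tan}$ consists of curves tangent to $\Delta$, the first reduction is to understand what happens along $\Sigma^1_{tan}$: I expect that along a $1$-dimensional tangent stratum $\Gamma$, after applying the local triviality and Puiseux-with-parameter statements of Lemma \ref{lem:StratFol} and Remark \ref{rk:LocalTrivialityPuiseux}, the characteristic foliation in the transverse slices varies analytically in the parameter along $\Gamma$, so that a transverse-singular trajectory converging to a point of $\Gamma$ must in fact converge to one of finitely many exceptional points on $\Gamma$ (the points where the transverse dynamics degenerates). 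Collecting these exceptional points on all strata of $\Sigma^1_{tan}$, together with $\Sigma^0$, yields the locally finite set $\widetilde\Sigma^0$, and establishes the first assertion of (i), namely that $\bar y \in \widetilde\Sigma^0$.

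Next I would fix $\bar y \in \widetilde\Sigma^0$ and work in a neighborhood, invoking Theorem \ref{thm:RSgeneral}: a resolution map $\sigma$ of the Martinet surface $\mathscr{M}$ (together with a reduction of singularities of the lifted characteristic vector field, along the lines of Remark \ref{rk:LocalModelFoliation} and the divergence restriction) transforms the characteristic line foliation into an analytic line foliation on a smooth surface, with only finitely many singular points lying over $\bar y$, each of which is — thanks to the divergence-type restriction mentioned in the text — of saddle type (i.e.\ not monodromic: the eigenvalue ratio is real and nonpositive, or more generally the singularity admits a characteristic orbit and no closed transversal loop). At each such resolved singularity, Proposition \ref{prop:TopDichPlanar} applies: a regular orbit converging to it is either characteristic or monodromic, but the saddle-type condition forbids the monodromic alternative. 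Hence every orbit of the resolved foliation converging to a point over $\bar y$ is a characteristic orbit, and there are only finitely many of them (the separatrices of finitely many saddle-type singularities, of which there are finitely many). Pushing forward by $\sigma$, which is analytic and proper, one gets that a characteristic transverse-singular trajectory $\gamma$ converging to $\bar y$ has semianalytic image; this is where I would use that $\sigma$ is an analytic morphism and that the image of a relatively compact semianalytic set under such a map is semianalytic, plus the fact that the orbit in the resolution space is itself semianalytic (being a separatrix of a saddle, it has a Puiseux expansion). The tail condition $\gamma([\bar t,1))\subset \Sigma^2$ follows because $\Sigma^1_{tr}$ is transverse to $\mathscr{L}$ (Lemma \ref{lem:StratFol}(iv) and Lemma \ref{rk:LocalTriviality}), so a trajectory converging to a point of $\widetilde\Sigma$ can cross $\Sigma^1_{tr}$ only finitely many times — otherwise, by monotonicity/transversality it would accumulate on $\Sigma^1_{tr}$ rather than on $\widetilde\Sigma$, forcing it to be monodromic; since $\gamma$ is characteristic this is excluded, hence after some $\bar t$ it stays in the open $2$-dimensional stratum.

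For part (ii), the finiteness is now immediate from the resolution picture: the characteristic trajectories converging to $\bar y$ are in bijection (up to reparametrization and the finitely many crossings of $\Sigma^1_{tr}$) with the separatrices of the finitely many saddle-type singularities over $\bar y$, of which there are finitely many; and each is a semianalytic curve by the argument above. The main obstacle I anticipate is \emph{justifying the saddle-type claim}: one must show that the divergence restriction on the characteristic vector field $\mathcal{Z}$ (coming, via Remark \ref{rk:LocalModelFoliation} and the structure of the Martinet equation $\delta\wedge d\delta = h\,\omega_M$, from the relation between $\mathcal{Z}$, the function $h$, and a suitable volume form) is preserved under the resolution and forces each resolved singularity to have no monodromic orbits — equivalently, that one never produces centers or foci after the reduction of singularities of planar vector fields. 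This is precisely the content that will be developed in Section \ref{sec:SingCharc} using the divergence computations and Theorem \ref{thm:RSgeneral}, and everything else in the proposition is a relatively formal consequence of the classical planar dichotomy plus semianalyticity of images under analytic resolution maps.
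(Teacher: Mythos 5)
Your high-level plan — resolve the Martinet surface and the characteristic line foliation, use the divergence restriction to force the resolved singularities to be saddles, import the planar dichotomy, and push semianalyticity of separatrices back down through the proper analytic map — is exactly the paper's strategy, and your identification of the saddle-type claim (forced by $\div_{\omS}(\mathcal Z)\in\mathcal Z(\mathcal O_{\mathscr S})$) as the crux is correct. However, two steps are glossed over in a way that leaves genuine gaps.

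First, the claim that the limit point $\bar y$ must lie in a locally finite set $\widetilde\Sigma^0$. Your proposed argument — that the ``transverse dynamics'' along a tangent stratum $\Gamma$ varies analytically and therefore a trajectory can only converge to finitely many ``degenerate'' points of $\Gamma$ — is not the paper's argument and, as stated, is not an argument at all: you do not say what ``degenerate'' means, nor why the degeneracy locus is discrete, nor why a trajectory cannot converge to a non-degenerate point. The paper instead defines $\widetilde\Sigma^0$ concretely as the image under $\pi$ of the singular points of $\widetilde{\mathscr L}$ together with the log-rank-$0$ locus of $\pi$, proves local finiteness by properness of $\pi$, and then shows $\bar y\in\widetilde\Sigma^0$ by contradiction: if $\widetilde{\mathscr L}$ were regular along the whole fiber $F=\pi^{-1}(\bar y)$ and $\pi$ had log-rank $1$ there, then by the Hsiang--Pati form \eqref{eq:HP1} and the flow-box structure of Theorem \ref{thm:RSgeneral}(IV.i.b) the topological limit $\omega(\widetilde\gamma)$ of the strict transform would contain an open arc of $E_{tan}$, projecting to a $1$-dimensional subset of $\Sigma$ — contradicting convergence of $\gamma$ to a single point. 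Without this, even the well-definedness and local finiteness of $\widetilde\Sigma^0$ are unjustified, and the divergence restriction is essential here (e.g.\ $\mathcal Z=y\,\partial_y$ near a line $\Gamma=\{y=0\}$ has trajectories converging to every point of $\Gamma$; it is ruled out precisely because $\div\mathcal Z=1\notin(y)$).

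Second, you need (and use) the bijection ``characteristic trajectories $\gamma$ on $\Sigma$ converging to $\bar y$ $\leftrightarrow$ separatrices of the saddles of $\widetilde{\mathscr L}$ over $\bar y$,'' but you only establish one direction. You correctly observe that an orbit of $\widetilde{\mathscr L}$ converging to a single resolved singular point is a characteristic orbit (the saddle has no monodromic orbits, by Proposition \ref{prop:TopDichPlanar}). What is missing is the converse: that if $\gamma$ is characteristic in the sense of Definition \ref{def:SingCharcMonod}, then its lift $\widetilde\gamma$ actually converges to a single saddle, rather than accumulating on a larger subset of $E_{tan}$ (a ``polycycle'' of saddles and connecting separatrices). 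This is the content of the paper's Proposition \ref{prop:characteristic}: one must show that $\omega(\widetilde\gamma)$ having more than one point forces $\gamma$ to be monodromic (one finds a transversal that $\widetilde\gamma$ crosses infinitely often, and pushes it down to a section $\Lambda$ in $\Sigma^2\cup\Sigma^1_{tr}$). Your phrase ``the saddle-type condition forbids the monodromic alternative'' is only valid pointwise at a resolved saddle; it does not by itself forbid a monodromic $\gamma$ whose lift spirals around a cycle of saddles on $E_{tan}$ — such trajectories do exist and are precisely the subject of Section \ref{sec:Monodromic}. Once this correspondence is established, your finiteness argument for (ii), your semianalyticity argument via Briot--Bouquet (Proposition \ref{prop:characteristic-semianalytic} in the paper, here you should reference the stable manifold theorem rather than a generic Puiseux claim), and your tail-condition argument (a characteristic $\gamma$ can cross a branch of $\Sigma^1_{tr}$ only finitely many times, since otherwise that branch serves as a monodromy section) all go through.
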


The proof of Proposition \ref{prop:TopDichSing} is given in subsection \ref{sec:ThmTopDichSing}, as a consequence of Theorem \ref{thm:RSgeneral}.

\begin{remark}[On Proposition \ref{prop:TopDichSing} and its proof]\label{rk:ProofThmTopDichSing}
\hfill
\begin{itemize}

\item[(i)] There exist elementary proofs of Proposition \ref{prop:TopDichPlanar}. However, for proving Proposition \ref{prop:TopDichSing}(ii) we cannot use the topological simplicity of the plane. In this context it is natural to use resolution of singularities (c.f. \cite[Proof of Theorem 9.13]{IY}).


\item[(ii)] Proposition \ref{prop:TopDichSing}(ii) is specific to characteristic line foliations, and does not hold for arbitrary line foliations over a surface. In our situation we can show that there exists a (locally defined) vector field which generates the characteristic foliation $\mathscr{L}$ and whose divergence is controlled by its coefficients (see subsection \ref{sec:divergence}, cf. \cite[Lemmas 2.3 and 3.2]{BR}). This guarantees that, after resolution of singularities, all singular points of the pull back of $\mathscr{L}$ are saddles (see Theorem \ref{thm:RSgeneral}(II), cf. Lemma \ref{lem:final}). 
\end{itemize}
\end{remark}

\subsection{Monodromic trajectories have infinite length}\label{sec:Monodromic}

The main objective of this subsection is to prove the following crucial result:

\begin{proposition}[Length of monodromic trajectories]\label{prop:MonoFiniteLenght}
The length of any monodromic trajectory is infinite.  
\end{proposition}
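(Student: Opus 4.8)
The strategy is to reduce the problem, via resolution of singularities, to an estimate on the length of monodromic orbits of an analytic planar vector field whose singularities are of saddle type, and then to use Speissegger's regularity theorem for transition maps (following Ilyashenko) to control the lengths of the successive ``loops'' of the trajectory. First I would recall that a monodromic trajectory $\gamma$ converges to a point $\bar y \in \widetilde\Sigma$ and, by Proposition \ref{prop:TopDichSing}(i), this point lies in the locally finite set $\widetilde\Sigma^0$. I would then apply the resolution of singularities result (Theorem \ref{thm:RSgeneral}) to the germ of the Martinet surface $\mathscr{M}$ at $\bar y$, together with the characteristic line foliation $\mathscr{L}$: this produces an analytic surface $\widehat\Sigma$, a proper analytic map $\pi:\widehat\Sigma \to \Sigma$ which is an isomorphism away from the exceptional divisor $\mathcal{E} = \pi^{-1}(\bar y)$, and an analytic vector field $\widehat{\mathcal{Z}}$ generating the pulled-back foliation, all of whose singular points on $\mathcal{E}$ are of saddle type (this saddle property is exactly what the divergence restriction buys us, cf.\ Remark \ref{rk:ProofThmTopDichSing}(ii) and Theorem \ref{thm:RSgeneral}(II)). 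Crucially, since $\pi$ is an analytic resolution and the problem is local, the length of $\gamma$ computed with respect to $g$ is infinite if and only if the length of the lifted trajectory $\widehat\gamma = \pi^{-1}\circ\gamma$ is infinite with respect to the pulled-back metric $\pi^*g$ (or any analytic metric on $\widehat\Sigma$, up to a bounded factor away from where $\widehat\gamma$ accumulates — and near $\mathcal{E}$ one must be slightly careful, so I would instead argue that $\gamma$ has infinite length because it makes infinitely many loops each of length bounded below).

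\textbf{Key steps.} (1) Lift $\gamma$ to $\widehat\gamma$ in the resolution space; since $\gamma$ is monodromic it crosses a section $\Lambda$ infinitely often, hence $\widehat\gamma$ crosses the lifted section $\widehat\Lambda$ infinitely often and therefore accumulates on the exceptional divisor $\mathcal{E}$. Because each singular point of $\widehat{\mathcal{Z}}$ on $\mathcal{E}$ is a saddle (in particular \emph{not} a center/focus), no single loop of $\widehat\gamma$ can shrink to a point; more precisely, the accumulation set of $\widehat\gamma$ in $\mathcal{E}$ is a connected union of edges of $\mathcal{E}$ (a ``polycycle''), since a saddle has no nearby periodic orbits and the divisor components are invariant. (2) Realize $\widehat\gamma$, for $t$ close to $1$, as an orbit spiralling along this polycycle $\mathcal{P} \subset \mathcal{E}$, passing repeatedly through a finite cyclic family of sections $\Lambda_1,\dots,\Lambda_N$ placed transversally to $\widehat{\mathcal{Z}}$, one near each vertex (saddle) of $\mathcal{P}$. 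The return to $\Lambda_1$ after one full turn is a composition of Dulac (transition) maps near the saddles with regular-flow maps along the edges. (3) Here is where I invoke Speissegger's result (the regularity of transition maps near saddles, the reference to \cite{speiss2} mentioned in the acknowledgments, following Ilyashenko): the transition map near a saddle is not flat — it admits an asymptotic expansion of the form $\mathfrak{r}(s) \sim c\, s^{\lambda}$ (possibly with logarithmic corrections) with $\lambda$ the ratio of eigenvalues, so in particular $\mathfrak{r}(s) \geq c' s^{\Lambda_0}$ for some exponent $\Lambda_0 > 0$ and $s$ small. Composing the $N$ such maps around the polycycle, the $n$-th return coordinate $s_n$ on $\Lambda_1$ satisfies $s_{n+1} \geq C s_n^{\Lambda}$ for some $\Lambda>0$, whence $s_n$ cannot converge to $0$ faster than a geometric-type rate controlled by $\Lambda$ — in any case the arc of $\widehat\gamma$ joining $\Lambda_1$ to itself starting at parameter $s_n$ travels along at least one full edge of $\mathcal{P}$, which has a fixed positive $\pi^*g$-length bounded below, \emph{unless} that portion is very close to $\mathcal{E}$; to handle the latter I observe that along an edge the orbit stays at transverse distance comparable to $s_n$ from $\mathcal{E}$ and the edge has fixed length, so its contribution to arc length is bounded below by a constant (the edge itself is a non-trivial compact arc, and the nearby orbit shadows it). (4) Summing over the infinitely many returns $n$ gives infinite length for $\widehat\gamma$; pushing forward by $\pi$, which is a local analytic diffeomorphism away from $\bar y$ and contracts lengths by at most a bounded factor on each fixed loop (each loop avoids a neighbourhood of $\bar y$ except possibly for a controlled piece), yields that $\gamma$ has infinite length.

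\textbf{Main obstacle.} The delicate point — and the reason resolution alone (as in \cite{BR} under the smoothness hypothesis on $\Sigma$) is not enough here — is step (3)–(4): controlling the length of each loop from \emph{below} uniformly in $n$. Monodromy by itself only says $\gamma$ loops infinitely often; a priori the loops could shrink so fast that the total length converges. Ruling this out requires the quantitative non-flatness of the Dulac transition maps near the saddle singularities, which is exactly the content of the Speissegger/Ilyashenko theorem on the regularity (quasi-analyticity / polynomially bounded asymptotics) of transition maps; the saddle nature of the singularities, guaranteed by the divergence restriction after resolution, is what makes that theorem applicable (it fails for focus/center type singularities). So the heart of the argument is to set up the polycycle and its section maps correctly and to extract from the transition-map asymptotics a lower bound on the displacement $s_n \mapsto s_{n+1}$ strong enough to force $\sum_n (\text{length of $n$-th loop}) = +\infty$. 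I expect most of the remaining work — lifting by $\pi$, checking that the accumulation set is a polycycle, comparing $g$ with an analytic metric on the resolution — to be routine given the tools already set up (Theorem \ref{thm:RSgeneral}, Lemma \ref{lem:CharacteristicFoliation}, Lemma \ref{lem:StratFol}); I would organize it by first proving a clean comparison statement (this is presumably Proposition \ref{prop:ComparisonMonodromic} alluded to in the excerpt, to be proved in Section \ref{sec:SingCharc}) that reduces Proposition \ref{prop:MonoFiniteLenght} to the planar transition-map estimate, and then invoking \cite{speiss2}.
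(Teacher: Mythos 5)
Your proposal correctly identifies the broad toolkit — resolution of singularities via Theorem~\ref{thm:RSgeneral}, the saddle-type singularities produced by the divergence restriction, and Speissegger's regularity of transition maps feeding into Proposition~\ref{prop:ComparisonMonodromic} — but the concluding step of your argument is wrong, and the actual mechanism by which the paper extracts a contradiction is quite different.

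The gap is in your step (3)--(4): you claim that each loop of the lifted trajectory $\widehat\gamma$ has $\pi^*g$-length bounded below by a fixed positive constant because it shadows a compact edge of the polycycle. This is false. The pulled-back metric $\pi^*g$ \emph{degenerates} along the exceptional divisor $E$ (this is precisely the content of the Hsiang--Pati normal form in Lemma~\ref{lem:HPmetric}: locally $\pi^*g$ is bi-Lipschitz to $(d\pmb{u}^\alpha)^2 + (d\pmb{u}^\beta)^2$, which vanishes on $E$). Equivalently, downstairs, the monodromic trajectory $\gamma$ converges to the single point $\bar y\in\Sigma$, so once $t$ is close to $1$ every loop is contained in an arbitrarily small metric ball around $\bar y$ and its $g$-length tends to $0$. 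There is no fixed lower bound per loop, so summing a divergent number of loops does not by itself force infinite length — this is exactly why the proposition is delicate.

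The paper's actual proof is by contradiction and hinges on a symplectic/Stokes' argument, not on a per-loop length estimate. Assume ${\rm length}^g(\gamma)<\infty$. Lemma~\ref{lem:JumpsInterval} produces a one-parameter family of jump-equivalent monodromic trajectories $\gamma^z$ sweeping a $2$-dimensional region, and Proposition~\ref{prop:ComparisonMonodromic} — where Speissegger's Hardy-field result and the Hsiang--Pati metric control enter — gives a \emph{uniform} length bound on all the $\gamma^z$. Each $\gamma^z$ is a singular horizontal path, hence admits an abnormal lift $\psi^z=(\gamma^z,p^z)$ to $\Delta^\perp\subset T^*M$ (Proposition~\ref{PROPsing}), and the uniform length bound plus Gronwall (Proposition~\ref{PROPpbounded}) gives $|\psi^z(t)|^*\le\tilde K$ uniformly. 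One then chooses an arc $\xi=(\alpha,\beta)$ in $\Delta^\perp$ over the section $\Lambda$ with $A:=\int_0^1\beta\cdot\dot\alpha\,ds>0$. Because the lifts' derivatives lie in $\ker(\omega|_{\Delta^\perp})$ and $p^s\cdot\dot\gamma^s\equiv 0$, Stokes' theorem applied to the surface swept by the lifts between consecutive sections shows the transversal ``action'' $\int_0^1\beta^k\cdot\dot\alpha^k\,ds=A$ is an invariant of the return number $k$. But the arcs $\alpha^k$ shrink to $\bar y$ (so ${\rm length}^g(\alpha^k)\to 0$) while $|\beta^k|^*\le\tilde K$, forcing $\int\beta^k\cdot\dot\alpha^k\to 0$, contradicting $A>0$. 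This invariance-of-action argument is the heart of the proof and is entirely absent from your proposal; without it (or an equivalent device), the transition-map asymptotics alone do not yield the conclusion.
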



\begin{remark}\label{rk:ProofThmMonoFiniteLenght}
If we assume that  the distribution $\Delta$ is generic (with respect to the $\mathcal{C}^{\infty}$-Whitney topology), then the Martinet surface is smooth and the above result corresponds to \cite[Lemma 2.1]{zz95}.
\end{remark}

The proof of Proposition \ref{prop:MonoFiniteLenght} is done by contradiction. The first step consists in showing that if $\gamma$ has finite length, then every monodromic  trajectory which is ``topologically equivalent" to $\gamma$ (see Definition \ref{def:Jumps} below) also has finite length (see Proposition \ref{prop:ComparisonMonodromic} below). Hence, as discussed in the introduction, the assumption of finiteness on the length of $\gamma$ implies that $\mathcal{X}^{\bar{y}}_{\Delta}$ has positive $2$-dimensional Hausdorff measure (cf. Lemma \ref{lem:JumpsInterval}). Then, the second step consists in using an analytic argument based on Stokes' Theorem to obtain a contradiction.\\

Let us consider a monodromic trajectory  $\gamma:[0,1) \rightarrow \Sigma$ with limit $\bar{y} \in \widetilde{\Sigma}$ and assume that  $\gamma$ is injective and  final (cf. Definition \ref{def:SingCharcMonod}(i)), and that its image is contained in a neighborhood $\mathcal{V}$ of $\bar{y}$ where the line foliation $\mathscr{L}$ is generated by a vector-field $\mathcal{Z}$ (see Remark \ref{rk:LocalModelFoliation}). Denote by $\varphi^{\mathcal{Z}}_s(x)$ the flow associated to $\mathcal{Z}$ with time $s$ and initial condition $x\in \mathcal{V} \cap \Sigma$, by $\Lambda$ a fixed section as in Definition \ref{def:SingCharcMonod}, and by $d^{\Lambda}: \Lambda \to \mathbb{R}$  the function which associates to each point $p \in \Lambda$ the length of the half-arc contained in $\Lambda$ which joins $p$ to $\bar{y}$ (we may also assume that $\Lambda \cap \mathcal{V}$ is a curve connecting $\bar{y}$ to a point of the boundary of $\mathcal{V}$). Moreover assume that $\gamma(0)$ belongs to $\Lambda$. By monodromy, there exists an infinite increasing sequence $\{t_k^{\gamma}\}_{k\in \N}$ in $[0,1)$ with $t_0^{\gamma}=0$ such that 
$$
 \gamma(t) \in \Lambda \mbox{ if and only if } t=t_k^{\gamma} \mbox{ for some } k \in \N
 $$
 and
$$
 \lim_{k \rightarrow \infty} t_k^{\gamma}= 1.
$$
 We are going now to introduce a sequence of Poincar\'e mappings adapted to $\gamma$, we need to distinguish two cases, depending whether the set $\gamma ([0,1)) \cap \Sigma_{tr}^1$ is finite or not.
 Note that, if $\gamma ([0,1)) \cap \Sigma_{tr}^1$ is a finite set, then up to restricting $\gamma$ to an interval of the form $[t_0,1)$ for some $t_0\in [0,1)$,
 we can assume that  $\gamma ([0,1)) \cap \Sigma_{tr}^1=\emptyset$. Hence the two cases to analyze are the case where $\gamma ([0,1)) \cap \Sigma_{tr}^1$ is empty and the case where $\gamma ([0,1)) \cap \Sigma_{tr}^1$ is infinite.\\


\begin{figure}\label{fig3}
\begin{center}
\includegraphics[width=4.2cm]{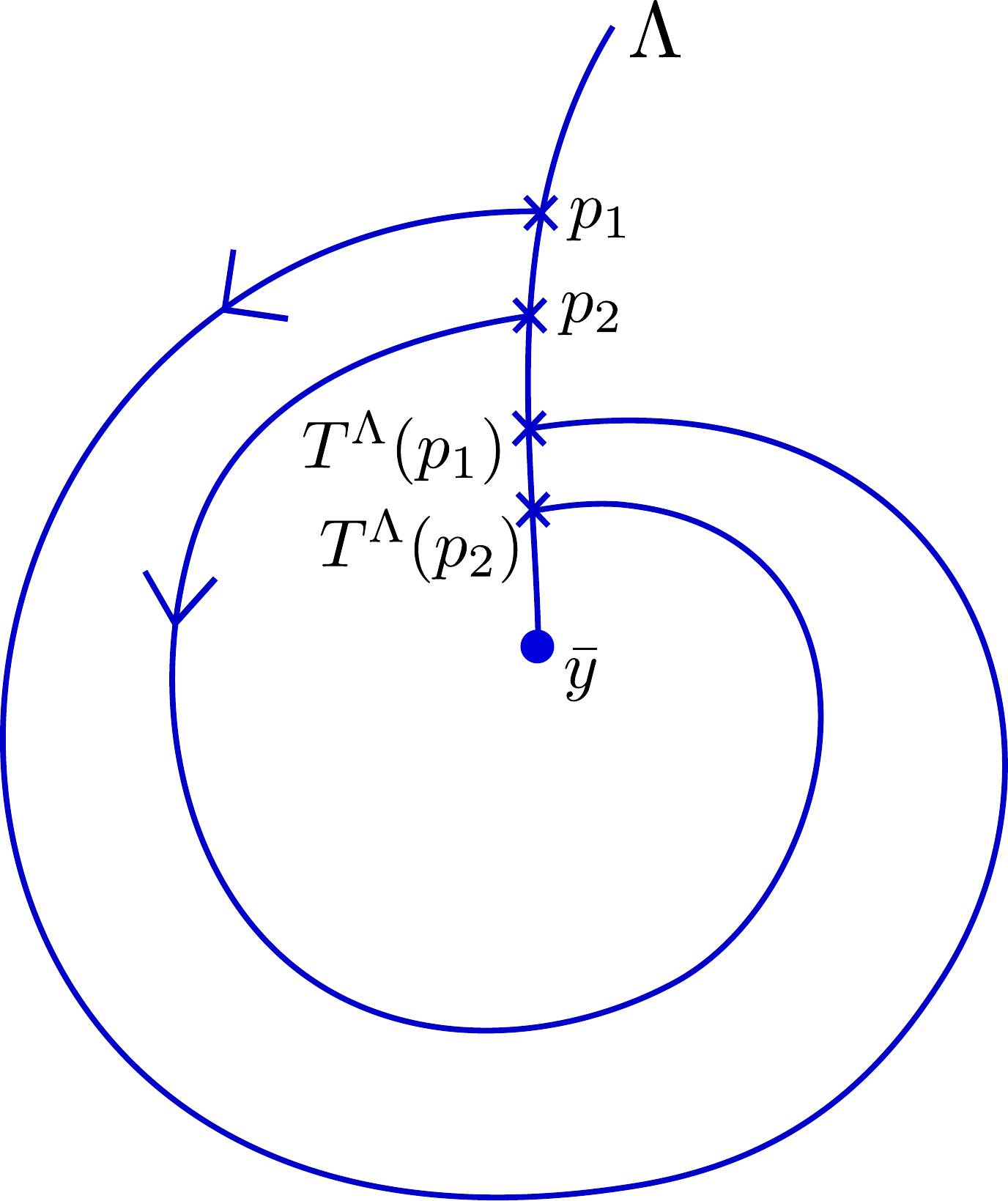}
\caption{The case $\gamma ([0,1)) \cap \Sigma_{tr}^1=\emptyset$}
\end{center}
\end{figure}

\noindent {\bf First case:} $\gamma ([0,1)) \cap \Sigma^1_{tr} = \emptyset$. \\
This is the classical case where we can consider the Poincar\'{e} first return map from $\Lambda$ to $\Lambda$ (see e.g. \cite[Definition 9.8]{IY}). By a Poincar\'e-Bendixon type argument, up to shrinking $\mathcal{V}$ and changing the orientation of $\mathcal{Z}$ we may assume that the mapping 
$$
T^{\Lambda} \, : \, \Lambda \cap \mathcal{V} \, \longrightarrow \, \Lambda \cap \mathcal{V}
$$
which assigns to each $p\in \Lambda \cap \mathcal{V}$ the first point $\varphi^{\mathcal{Z}}_{t}(p) \in \Lambda$ with $t>0$ and $d^{\Lambda}(\varphi^{\mathcal{Z}}_{t}(p))<d^{\Lambda}(p)$ is well-defined, continuous,  and satisfies 
\begin{eqnarray}\label{18sept1}
T^{\Lambda}(\gamma(t_k^{\gamma})) = \gamma(t_{k+1}^{\gamma}) \qquad \forall\,k \in \N
\end{eqnarray}
 and
\begin{equation}\label{eq:PoincareTransitionProp}
d^{\Lambda}(p_1) < d^{\Lambda}(q) < d^{\Lambda}(p_2)  \quad \implies \quad  d^{\Lambda}(T^{\Lambda}(p_1)) < d^{\Lambda}(T^{\Lambda}(q)) < d^{\Lambda}(T^{\Lambda}(p_2))
\end{equation}
for every $p_1, p_2, q$ in $\Lambda \cap \mathcal{V}$.\\



\begin{figure}\label{fig4}
\begin{center}
\includegraphics[width=5.3cm]{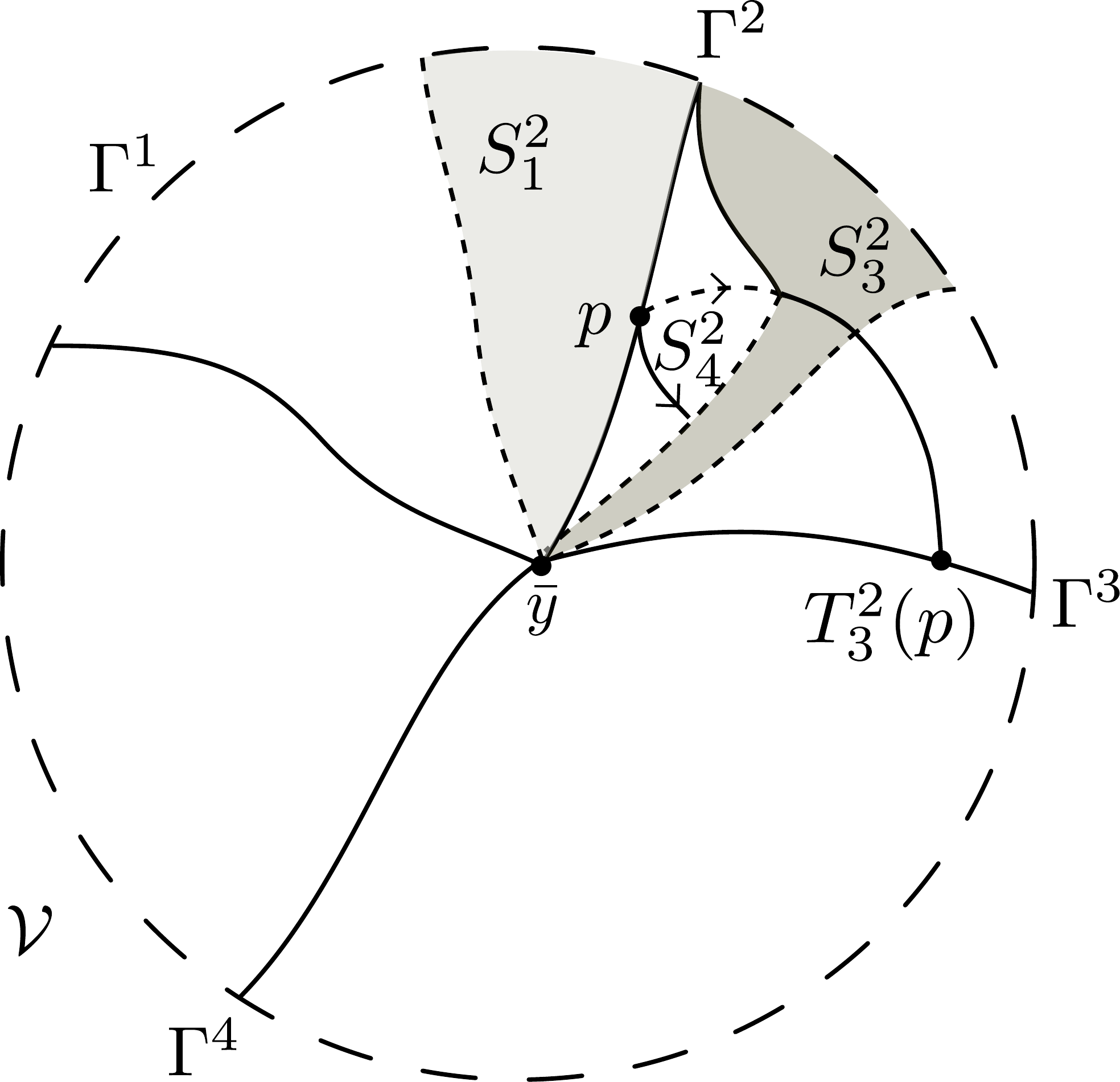}
\caption{The case $\gamma ([0,1))\cap \Sigma_{tr}^1$ infinite}
\end{center}
\end{figure}

\noindent {\bf Second case:} $\gamma ([0,1)) \cap \Sigma^1_{tr}$ is infinite. \\
In this case, up to shrinking $\mathcal{V}$, by semianalyticity of $\Sigma^{1}_{tr}$ and Lemma \ref{lem:StratFol} we can assume that $\Sigma^1_{tr}\cap \mathcal{V}$ is the union of $r$ connected components, say $\Gamma^1, \ldots, \Gamma^r$, whose boundaries are given by $\bar{y}$ and a point in the boundary of $\mathcal{V}$ (this point is distinct for each $i=1,\ldots,r$). In addition,  for each $i=1, \ldots, r$ there exists a neighborhood $\mathcal{V}^i$ of $\Gamma^i$ such that $\left(\Sigma \setminus \Gamma^i\right) \cap \mathcal{V}^i$ is the union of $s_i$ connected smooth subsets of $\Sigma^2$, say $S_{j}^i$ for $j = 1, \ldots, s_i$. Furthermore, as in the first case and up to shrinking $\mathcal{V}$ again,  by a Poincar\'e-Bendixon type argument we may assume that for every $i=1, \ldots, r$, if a piece of $\gamma([0,1))$ joins $\Gamma^i$ to some $\Gamma^{i'}$ through some $S^i_j$ then the corresponding Poincar\'e mapping from $\Gamma^{i}$ to $\Gamma^{i'}$ is well-defined. 
To be more precise,  for  each $i=1, \ldots,r$ we consider the maximal subset of the $S^i_j$'s,  relabeled $S^i_1, \ldots, S^i_{\hat{s}_i}$, with the jump correspondence   
$$
J(i,\cdot) \, : \, j \in \left\{1, \ldots, \hat{s}_i\right\} \, \longrightarrow \, \{1, \ldots, r\},
$$
such that the transition maps 
$$
T^{i}_j \, : \, \Gamma^i \longrightarrow  \Gamma^{J(i,j)} \qquad \forall\,i=1, \ldots, r, \quad \forall\,j=1, \ldots, \hat{s}_i,
$$
that  assign to each $p\in \Gamma^i$ the point $q\in \Gamma^{J(i,j)}$ such that there is an absolutely continuous path $\alpha:[0,1] \rightarrow \Sigma$ tangent to $\mathscr{L}$ over $(0,1)$ satisfying $\alpha(0)=p$, $\alpha(1)=q$, $\alpha((0,1)) \subset \Sigma^2$ and $\alpha((0,\epsilon)) \subset S^i_j$ for some $\epsilon >0$, are well-defined and continuous. Similarly as before, if we denote by  $d^i:\Gamma^i \to \mathbb{R}$ the function which associates to each point $p$ the length of the half-arc contained in $\Gamma^i$ which joins $p$ to $\bar{y}$, then we may also assume that for every $i=1, \ldots,r $ and every $p,q \in \Gamma^i$,
\begin{equation}\label{eq:TransitionPropI}
d^{i}(p) < d^{i}(q) \quad \implies \quad  d^{J(i,j)}(T^{i}_j(p)) < d^{J(i,j)}(T^{i}_j(q)) \qquad \forall\,j=1, \ldots, \hat{s}_i.
\end{equation}
By construction, for each integer $k$, there are $i_k \in \{1, \ldots,r\}$ and $j_k \in \{1, \ldots, \hat{s}_{i_k}\}$ such that $\gamma (t_k^{\gamma}) \in \Gamma^{i_k}$ and $\gamma (t_{k+1}^{\gamma}) \in \Gamma^{J(i_k,j_k)}=\Gamma^{i_{k+1}}$. We call sequence of jumps of $\gamma$ the sequence $\{(i_k,j_k)\}_{k\in \N}$  associated with $\{t_k^{\gamma}\}_{k\in \N}$. \\

We can now introduce the equivalence class on the set of monodromic trajectories.

\begin{definition}[Equivalence of monodromic paths]\label{def:Jumps}
Let $\gamma_1, \gamma_2: [0,1) \rightarrow \Sigma \cap \mathcal{V}$ be two final and injective monodromic trajectories with the same limit point $\bar{y}$ and which share the same section $\Lambda$, where $\gamma_i(0) \in \Lambda$ for $i=1,2$. We say that $\gamma_1$ and $\gamma_2$ are jump-equivalent if:\\
- either $\gamma_1([0,1)) \cap 
\Sigma^{1}_{tr} = \gamma_2 ([0,1)) \cap \Sigma_{tr}^1 = \emptyset$;\\
- or they have the same sequence of jumps.
\end{definition}

By classical  considerations about the Poincar\'{e} map $T^{\Lambda}$ defined in the first case or by a concatenation of orbits of $\mathscr{L}$ connecting $\Gamma^i$ to $\Gamma^j$ in the second case, the following holds:

\begin{lemma}[One parameter  families of equivalent monodromic paths]\label{lem:JumpsInterval}
Let $\gamma : [0,1) \rightarrow \Sigma \cap \mathcal{V}$ be a final and injective monodromic trajectory  with limit point $\bar{y}$, and let $\Lambda$ be a section such that $\gamma(0) \in \Lambda$. Then, for every point $p \in \Lambda$ with $d^{\Lambda}(p)<d^{\Lambda}(\gamma(0))$, there exists a final and injective monodromic trajectory $\lambda:[0,1) \to \Sigma \cap \mathcal{V}$, with $\lambda(0)=p$, which is jump-equivalent to $\gamma$. Moreover, such a trajectory is unique as a curve (that is, up to reparametrization).

%
%
\end{lemma}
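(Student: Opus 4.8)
\textbf{Proof plan for Lemma \ref{lem:JumpsInterval}.}

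The plan is to construct the trajectory $\lambda$ explicitly, treating separately the two cases in the dichotomy of Definition \ref{def:Jumps}, and then to verify uniqueness using the monotonicity properties \eqref{eq:PoincareTransitionProp} and \eqref{eq:TransitionPropI}. First I would fix the section $\Lambda$ and the sequence $\{t_k^\gamma\}_{k\in\N}$ as in the discussion preceding the statement, so that $\gamma(t_k^\gamma)$ are exactly the successive intersections of $\gamma$ with $\Lambda$, converging to $\bar y$.

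\emph{First case} ($\gamma([0,1))\cap\Sigma^1_{tr}=\emptyset$): Let $p\in\Lambda$ with $d^\Lambda(p)<d^\Lambda(\gamma(0))$. Set $p_0:=p$ and $p_{k+1}:=T^\Lambda(p_k)$, where $T^\Lambda$ is the Poincar\'e first-return map introduced above. By the monotonicity property \eqref{eq:PoincareTransitionProp} applied with $p_1$ a point on the $\gamma$-orbit between two consecutive returns (or, more simply, comparing $p$ with $\gamma(0)$ and $\bar y$), we get $d^\Lambda(p_k)<d^\Lambda(\gamma(t_k^\gamma))$ for every $k$; since $d^\Lambda(\gamma(t_k^\gamma))\to 0$ this forces $d^\Lambda(p_k)\to 0$, i.e. $p_k\to\bar y$. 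Between $p_k$ and $p_{k+1}$ we follow the orbit of $\mathcal{Z}$, which stays in $\Sigma^2$ by the definition of $T^\Lambda$; concatenating these arcs and reparametrizing onto $[0,1)$ produces a transverse-singular path $\lambda:[0,1)\to\Sigma\cap\mathcal{V}$ with $\lambda(0)=p$, converging to $\bar y$, which meets $\Lambda$ exactly at the $p_k$'s and avoids $\Sigma^1_{tr}$; hence $\lambda$ is final, injective (injectivity comes from the strict monotonicity of $d^\Lambda\circ T^\Lambda$ along the orbit, which prevents the orbit from closing up), monodromic, and jump-equivalent to $\gamma$.

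\emph{Second case} ($\gamma([0,1))\cap\Sigma^1_{tr}$ infinite): Here I would use the sequence of jumps $\{(i_k,j_k)\}_{k\in\N}$ of $\gamma$. Starting from $p\in\Lambda$ — which we may take to be a branch of $\Sigma^1_{tr}$, say $\Gamma^{i_0}$, with $d^{i_0}(p)<d^{i_0}(\gamma(0))$ — define $q_0:=p$ and inductively $q_{k+1}:=T^{i_k}_{j_k}(q_k)\in\Gamma^{J(i_k,j_k)}=\Gamma^{i_{k+1}}$, which is well-defined and continuous by construction of the transition maps. The monotonicity \eqref{eq:TransitionPropI} gives $d^{i_k}(q_k)<d^{i_k}(\gamma(t_k^\gamma))\to 0$, so $q_k\to\bar y$. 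Concatenating the orbit arcs $\alpha$ of $\mathscr{L}$ realizing each transition map (each such arc lies in $S^{i_k}_{j_k}\subset\Sigma^2$ near its start, and in $\Sigma^2$ throughout its interior) and reparametrizing onto $[0,1)$ yields $\lambda$; it meets $\Sigma^1_{tr}$ at the points $q_k$ only, hence it has the same sequence of jumps as $\gamma$ and is jump-equivalent to it. Injectivity again follows from the strict decrease of the $d^i$-values along the concatenation.

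\emph{Uniqueness.} Suppose $\lambda,\lambda':[0,1)\to\Sigma\cap\mathcal{V}$ are both final injective monodromic trajectories, jump-equivalent to $\gamma$, with $\lambda(0)=\lambda'(0)=p$. In the first case both have the same first return point $T^\Lambda(p)$ (the map $T^\Lambda$ is a genuine function, not a correspondence), hence by induction the same sequence of $\Lambda$-intersections, and between consecutive intersections each follows the unique orbit of $\mathcal{Z}$ through those points and staying in $\Sigma^2$; so $\lambda$ and $\lambda'$ have the same image and differ only by reparametrization. In the second case, jump-equivalence means they share the sequence of jumps $\{(i_k,j_k)\}$, so at step $k$ both apply the same transition map $T^{i_k}_{j_k}$ starting from the same point, giving the same $q_k$; and between $q_k$ and $q_{k+1}$ each follows the unique orbit of $\mathscr{L}$ in $S^{i_k}_{j_k}\cup\Sigma^2$, so again the images agree. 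This proves uniqueness as a curve.

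I expect the main obstacle to be the bookkeeping in the second case — verifying that the concatenated path is genuinely a transverse-singular trajectory in the sense of Definition \ref{def:conv traj} (in particular that it avoids $\widetilde\Sigma$ on $[0,1)$ and meets $\Sigma^1_{tr}$ precisely along the prescribed jump sequence, with no extra or missed crossings), and that injectivity is preserved through infinitely many concatenations. Both reduce to the strict monotonicity encoded in \eqref{eq:PoincareTransitionProp} and \eqref{eq:TransitionPropI} together with the Poincar\'e--Bendixson-type normalization of $\mathcal{V}$ already carried out above, so once the combinatorial setup is in place the argument is routine.
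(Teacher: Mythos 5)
Your proposal is correct and matches the paper's (entirely implicit) proof: the authors state Lemma \ref{lem:JumpsInterval} as following immediately from ``classical considerations about the Poincar\'e map $T^\Lambda$'' in the first case and ``a concatenation of orbits of $\mathscr{L}$ connecting $\Gamma^i$ to $\Gamma^j$'' in the second, with no further detail given. Your argument --- iterating $T^\Lambda$ (resp.\ the transition maps $T^{i_k}_{j_k}$), using the monotonicity in \eqref{eq:PoincareTransitionProp}/\eqref{eq:TransitionPropI} to force $d^\Lambda(p_k)\to 0$ and to rule out self-intersection, and appealing to the uniqueness of the orbit arcs through prescribed endpoints for the uniqueness claim --- is precisely the content the paper elides.
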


Lemma \ref{lem:JumpsInterval} plays a key role in the proof of Proposition \ref{prop:MonoFiniteLenght}. Indeed,
from the existence of one monodromic trajectory, it allows us to infer the existence of a parametrized set of monodromic trajectories filling a $2$-dimensional surface. The next result will also be crucial to control the length of the monodromic trajectories in such a set (we denote by $\mbox{length}^g$ the length of a curve with respect to the metric $g$).

\begin{proposition}[Comparison of equivalent monodromic paths]\label{prop:ComparisonMonodromic}
Let $\gamma$ be a monodromic trajectory with limit point $\bar{y}$ and section $\Lambda$ such that $\gamma(0) \in \Lambda$. Suppose that the length of $\gamma$ is finite. Then there exists a constant $K>0$ such that, for every monodromic trajectory  $\lambda$ jump-equivalent to $\gamma$ satisfying $d^{\Lambda}(\lambda(0)) < d^{\Lambda}(\gamma(0))$, we have
\[
{\rm length}^g(\lambda) \leq K\, {\rm length}^g(\gamma) < \infty.
\]
\end{proposition}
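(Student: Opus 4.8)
The plan is to compare the lengths of $\gamma$ and of a jump-equivalent $\lambda$ term by term, where the "terms'' are the successive pieces of the trajectories between consecutive returns to the section(s). Fix the notation of the two cases above. In the first case, write $\gamma_k$ for the arc of $\gamma$ between $\gamma(t_k^\gamma)$ and $\gamma(t_{k+1}^\gamma)$, so that ${\rm length}^g(\gamma)=\sum_k {\rm length}^g(\gamma_k)$, and similarly for $\lambda$; in the second case, replace the single section $\Lambda$ by the finite family $\Gamma^1,\dots,\Gamma^r$ and the first-return map $T^\Lambda$ by the transition maps $T^i_j$, but the structure is the same: $\gamma$ and $\lambda$ follow, by jump-equivalence, the same finite symbolic sequence of pieces through the same smooth sheets $S^i_{j}$ (resp. the same side of $\Lambda$). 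Each such piece lies in a fixed compact region $\overline{\Sigma^2\cap\mathcal V}$ away from $\widetilde\Sigma$ except near its two endpoints, where it approaches the section(s). The key point is that there are only \emph{finitely many} transition maps $T^i_j$ (resp. one map $T^\Lambda$) involved, each of which is an analytic (or at least $C^1$, by the regularity of $\Sigma^1_{tr}$ from Remark~\ref{rk:LocalTrivialityPuiseux}) diffeomorphism between one-dimensional strata, defined by integrating the analytic vector field $\mathcal Z$ over a relatively compact transversal region; hence each such map, and the length of the corresponding flow-arc as a function of the initial point on the section, is Lipschitz — more precisely, comparable up to a multiplicative constant.

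The heart of the argument will then be a \emph{uniform comparison of arc-lengths}: I claim there is a constant $C>0$, depending only on $\mathcal Z$, $g$, $\mathcal V$ and the finite list of sheets $S^i_j$, such that for any two points $p,q$ lying on the same section with $d^{\Lambda}(q)<d^{\Lambda}(p)$ (resp. $d^{i}(q)<d^{i}(p)$) that are carried by the same transition map, the length of the flow-arc issued from $q$ is at most $C$ times the length of the flow-arc issued from $p$. This is where I expect the main obstacle to lie. One cannot simply invoke smooth dependence on initial conditions because the arcs get longer and longer as they spiral toward $\bar y$, and the vector field $\mathcal Z$ degenerates at $\widetilde\Sigma$; the comparison must be done near the singular set. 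The way I would handle this is to split each arc into a "far'' portion (inside a fixed compact set where $\mathcal Z$ is bounded away from $0$, on which ordinary Gronwall-type continuous dependence gives the Lipschitz bound with no trouble) and a "near'' portion, entering a small neighborhood $W$ of $\bar y$; on the near portion, the monotonicity property \eqref{eq:PoincareTransitionProp} (resp. \eqref{eq:TransitionPropI}) together with the order-preserving structure of the return dynamics forces the $\lambda$-arc to be "nested inside'' the corresponding $\gamma$-arc — each return point of $\lambda$ is closer to $\bar y$ along the section than the corresponding return point of $\gamma$ — so the portion of the surface swept out by $\lambda$ between two returns is contained in (a bounded-distortion image of) the portion swept out by $\gamma$, and a comparison of lengths follows by integrating the analytic $1$-form $\mathcal Z$-arc-length across this nested annular region.

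Granting this uniform comparison, the proof concludes by summation. Since $\gamma$ and $\lambda$ realize the same symbolic itinerary, we may pair up the $k$-th arc of $\lambda$ with the $k$-th arc of $\gamma$ and estimate ${\rm length}^g(\lambda_k)\le C\,{\rm length}^g(\gamma_k)$ for every $k$ (possibly after shifting the index by a bounded amount to account for the finitely many initial pieces before the dynamics becomes "stationary'', which only affects a fixed finite part of the sum and hence changes the constant by a bounded factor). Summing over $k$ and using ${\rm length}^g(\gamma)=\sum_k{\rm length}^g(\gamma_k)<\infty$ yields ${\rm length}^g(\lambda)\le K\,{\rm length}^g(\gamma)<\infty$ with $K=C$ (or $K=C$ times a bounded correction coming from the finitely many exceptional initial pieces). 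The hypothesis $d^{\Lambda}(\lambda(0))<d^{\Lambda}(\gamma(0))$ is exactly what makes the nesting work from the very first return onward, so that no "head'' term of $\lambda$ escapes the comparison. I would remark that the detailed verification of the uniform arc-length comparison near $\bar y$ is the technical core and is the place where one genuinely uses that $\mathcal Z$ is analytic (not merely smooth) and that, after the resolution of Proposition~\ref{prop:TopDichSing}, the relevant singularities are of saddle type; in fact this is why the proof of Proposition~\ref{prop:ComparisonMonodromic} is deferred to Section~\ref{sec:SingCharc}, where the resolution machinery is available.
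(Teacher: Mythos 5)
Your high-level structure — decompose $\gamma$ and $\lambda$ into arcs between consecutive section crossings, prove a per-arc length comparison, sum — does match the paper's strategy. You also correctly identify where the difficulty lies (near $\bar y$) and correctly sense that the resolution and saddle-type singularities must enter. However, the technical core of the argument, the uniform per-arc length comparison, is not established, and the mechanism you propose for it would not yield the result.

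Two concrete issues. First, the claim that each transition map $T^i_j$ and the per-arc length function are Lipschitz in the starting point is false in the regime that matters: as the starting point approaches $\bar y$, the flow arcs approach the singular locus, the return times diverge, and continuous dependence on initial conditions gives no uniform control. The "far/near" split does not repair this, because the near part is precisely what must be estimated and the far part contributes only a bounded prefix. Second, the proposed estimate on the near part — that nesting of the $\lambda$-arcs inside the $\gamma$-arcs, together with "integrating the analytic $1$-form $\mathcal Z$-arc-length across the nested annular region," gives the comparison — does not work. Nesting alone does not imply a length comparison (the inner arc of a nested pair of saddle-like arcs can be longer in the intrinsic metric unless one controls the geometry finely), and the arc-length element $|\mathcal Z|\,dt$ is not a closed form, so there is no Stokes-type identity to integrate it over the region between the arcs. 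In fact, as the paper emphasizes in Remark~\ref{rk:ProofPropComparisonMonodromic}(ii), this type of elementary geometric comparison is exactly what breaks down when the jump sequence is non-periodic.

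What the paper actually does — and what is genuinely needed — is to transfer the problem to the resolution $\pi:\mathscr S \to \Sigma$ of Theorem~\ref{thm:RSgeneral}, where (a) the line foliation has only saddle singularities thanks to the divergence bound of Proposition~\ref{prop:DivProperty}, (b) the pulled-back metric $g^\ast$ is bi-Lipschitz equivalent to a monomial metric $h_{\bar z}=(d\pmb u^\alpha)^2+(d\pmb u^\beta)^2$ via Hsiang--Pati coordinates (Lemma~\ref{lem:HPmetric}), and (c) the $g^\ast$-length of a saddle-transition arc is, up to a factor $\sqrt 2$, exactly $|\pmb u^\alpha(p)-\pmb u^\alpha\circ\phi_{\bar z}(p)|$ (Lemma~\ref{lem:lengthSaddle}). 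The monotonicity of this quantity in $p$, which is what finally gives the per-arc inequality ${\rm length}^{g^\ast}(L(p))\le K\,{\rm length}^{g^\ast}(L(q))$, is obtained from Speissegger's result~\cite{speiss2} that such transition maps, composed with $\exp(-x)$, lie in a Hardy field (hence are eventually monotone). None of these ingredients — the Hsiang--Pati bi-Lipschitz reduction, the explicit monomial form of $g^\ast$, the Hardy field monotonicity — appear in your plan, and without them the crucial claim that a $\lambda$-arc has length at most $K$ times the corresponding $\gamma$-arc remains unproved.
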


The proof of Proposition \ref{prop:ComparisonMonodromic} is given in subsection \ref{sec:ProofPropComparisonMonodromic} as a consequence of Theorem \ref{thm:RSgeneral}.  We give here just an idea of the proof.

\begin{remark}[Idea of the proof of Proposition \ref{prop:ComparisonMonodromic}]\label{rk:ProofPropComparisonMonodromic}\hfill
\begin{itemize}
\item[(i)] If $\gamma([0,1)) \cap \Sigma^1_{tr} = \emptyset$, then Proposition \ref{prop:ComparisonMonodromic} can be proved in a much more elemetary argument based on the following observation  via a geometrical argument.  Indeed, by properties (\ref{18sept1})-(\ref{eq:PoincareTransitionProp}) we note that, for all $k\in \N$ and all $p\in \Lambda$,
\[
d^{\Lambda}\left(\gamma(t_k^{\gamma})\right)>d^{\Lambda}(p) >d^{\Lambda} \left(\gamma(t_{k+1}^{\gamma})\right) \quad \implies \quad d^{\Lambda}\left(\gamma(t_{k+1}^{\gamma})\right)>d^{\Lambda}\left(T^{\Lambda}(p) \right) >d^{\Lambda} \left(\gamma(t_{k+2}^{\gamma})\right).
\]
So, if we denote by $\lambda_k$ the half-leaf of $\mathscr{L}$ connecting $p$ and $T^{\Lambda}(p)$, it follows by elementary (although non-trivial) geometrical arguments  that there exist $K>0$ and $\epsilon_k\geq 0$ such that
\[
{\rm length}^g(\lambda_k) \leq K\, {\rm length}^g \left(\gamma([t_k^{\gamma},t_{k+2}^{\gamma}])\right) + \epsilon_k \qquad \forall\,k \in \N,
\]
where the sequence $\{\epsilon_k\}$ is summable (since we will not use this fact, we do not prove it). This bound essentially allows one to prove \ref{prop:ComparisonMonodromic}, up to an extra additive constant in the bound ${\rm length}^g(\lambda) \leq K\, {\rm length}^g(\gamma)$ that anyhow is inessential for our purposes; note that this argument depends essentially on the fact that $\gamma(t_k^{\gamma})$ belongs to the same section $\Lambda$ for every $k$. 

\item[(ii)] In the case where $\gamma([0,1)) \cap \Sigma^1_{tr} \neq \emptyset$ is infinite, the situation is much more delicate. One needs to work with the countable composition of transition maps $T^{i_k}_{j_k}$ (in order to replace the Poincar\'{e} return), and the sequence of maps that one needs to consider is arbitrary. In particular, paths $\gamma$ whose jump sequences are non-periodic are specially challenging because we can not adapt the argument of the first part of the remark to this case. This justifies our use of more delicate singularity techniques (e.g. the regularity of transition maps \cite{speiss2} and the bi-Lipschitz class of the pulled-back metric \cite{HP}). This leads to the more technical statement in Theorem \ref{thm:RSgeneral}(IV) (see also Lemma \ref{lem:lengthSaddle}).
\end{itemize}
\end{remark}

We are now ready to prove Proposition  \ref{prop:MonoFiniteLenght}. 

\begin{proof}[Proof of Proposition \ref{prop:MonoFiniteLenght}]
Consider a monodromic trajectory  $\gamma:[0,1) \rightarrow \Sigma$ with limit $\bar{y} \in \widetilde{\Sigma}$ as above, and assume  that it has finite length. As before, we may assume that $\gamma$ is final, injective, and that $\bar{z}:=\gamma(0)\in \Lambda$. By Lemma \ref{lem:JumpsInterval}, for every $z\in \Lambda$ such that $d^{\Lambda}(z)<d^{\Lambda}(\bar{z}) $ there exists a unique final monodromic singular trajectory $\gamma^z :[0,1] \to \Sigma$, with $\gamma^z(0)=z$, which is jump-equivalent to $\gamma$. Moreover, by Proposition \ref{prop:ComparisonMonodromic} there exists $K>0$ such that 
\begin{eqnarray}\label{18sept2}
{\rm length}^g(\gamma^z) \leq K\, {\rm length}^g(\gamma).
\end{eqnarray}
Let $\{(i_k,j_k)\}_k$ be the sequence of jumps associated with $\gamma$. For every $z \in \Lambda$ with $d^{\Lambda}(z)<d^{\Lambda}(\bar{z}) $ the path $\gamma^z :[0,1] \to \Sigma$ is a singular horizontal path starting at $z$, so it admits a lift $\psi^z=(\gamma^z,p^z):[0,1] \rightarrow T^*M$ such that $\psi^z(0)=(z,p)$ with $p \perp \Delta_z$ and $|p|^*=1$ (see Proposition \ref{PROPsing}). Moreover, by (\ref{18sept2}) and Proposition \ref{PROPpbounded},
there exists $\tilde{K}>0$ such that
\begin{equation}
\label{eq:psi bounded}
|\psi^z(t)|^* \leq \tilde{K} \qquad \forall \, t \in [0,1], \quad \forall \,z \in \Lambda \mbox{ with }  d^{\Lambda}(z)<d^{\Lambda}(\bar{z}).
\end{equation}

Let $z \in \Lambda\cap \mathcal{V}$ such that $d^{\Lambda}(z)<d^{\Lambda}(\bar{z}) $ be fixed. Then there is  an injective  smooth path $\xi=(\alpha,\beta): [0,1] \rightarrow T^*M$ which satisfies the following properties:
\begin{eqnarray}\label{18june1}
\alpha(0)=\bar{z},\quad \alpha (1) = z, \quad \alpha(s) \in \Lambda, \quad \mbox{and} \quad d^{\Lambda}(\alpha(s)) \leq d^{\Lambda}(\alpha(s')) \qquad \forall \,0\leq s'\leq s \leq 1,
\end{eqnarray}
\begin{eqnarray}\label{18june2}
\beta(s) \perp \Delta_{\alpha(s)} \quad \mbox{and} \quad |\beta(s)|^*=1 \qquad \forall\, s \in [0,1],
\end{eqnarray}
and
\begin{eqnarray}\label{18june3}
A:= \int_{0}^1 \beta(s) \cdot \dot{\alpha}(s) \, ds >0.
\end{eqnarray}
Note that (\ref{18june3}) can be satisfied because $\Delta$ is transverse to $\Lambda$. For every $s\in [0,1]$, set $\gamma^s:=\gamma^{\alpha(s)}$ and note that $\gamma^0=\gamma$. By construction, each path $\gamma^s$ has the same sequence of jumps $\{(i_k,j_k)\}_{k\in \N}$ which is associated to sequences of times $\{t^s_k:=t_k^{\gamma^s}\}_{k\in \N}$. For every $s\in [0,1]$, denote by $\psi^s=(\gamma^s,p^s)$ the abnormal lift associated to $\gamma^s$ starting at $(\alpha(s),\beta(s))=\xi(s)$. We may assume without loss of generality that $p^s=p^{\alpha(s)}$ for all $s\in [0,1]$, so that $\psi^s=\psi^{\alpha(s)}$. 

\begin{figure}\label{fig5}
\begin{center}
\includegraphics[width=4.7cm]{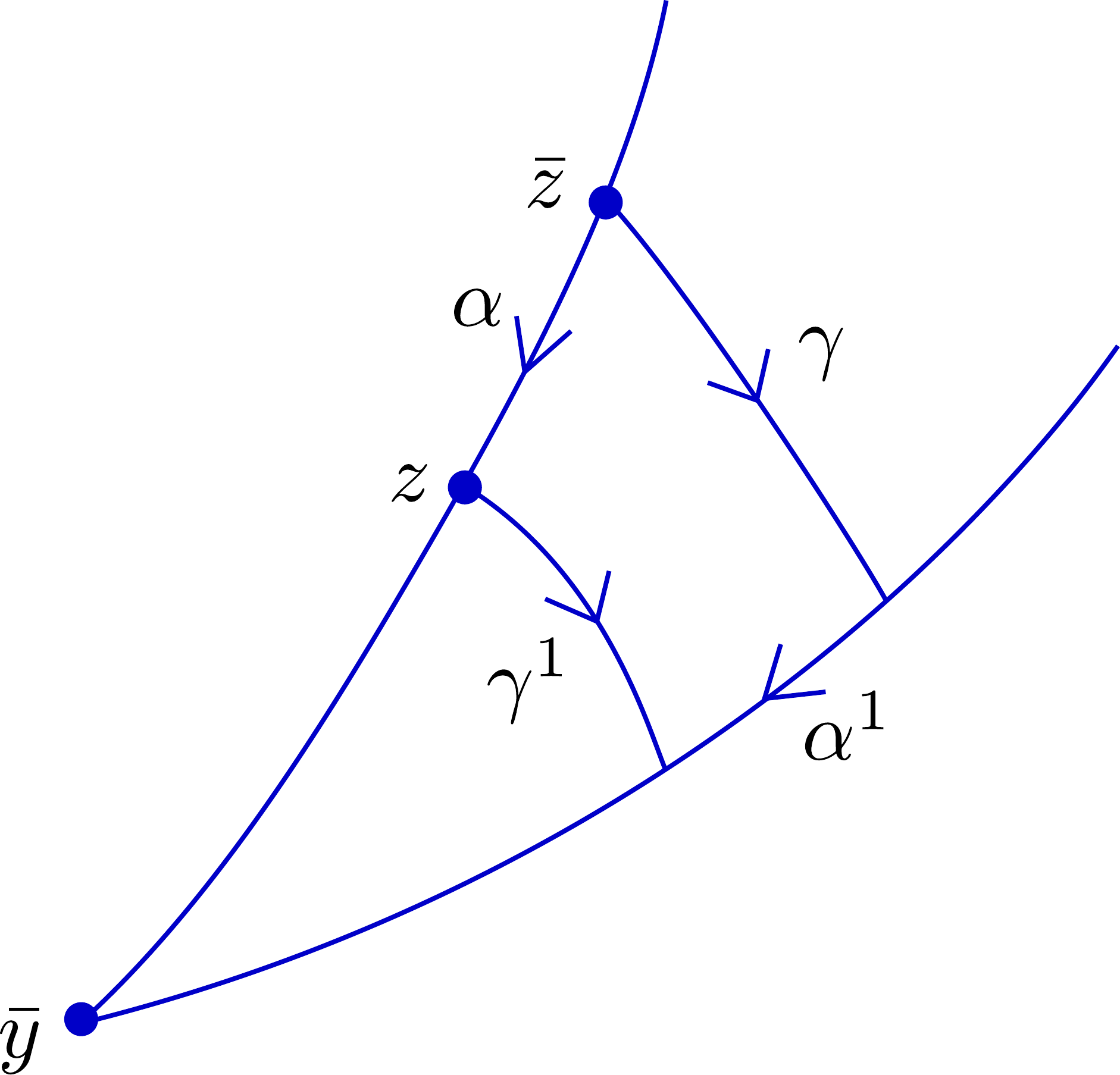}
\caption{The projection of $\mathcal{S}^0$ in $M$}
\end{center}
\end{figure}

From $t_0=0$ to $t_1^s$, the set of lifts $\psi^s=(\gamma^s,p^s):=\psi^{\alpha(s)}$ of the paths $\gamma^s$ starting at $(\alpha^s,\beta^s)$ span a surface $\mathcal{S}^0$ homeomorphic to a 2-dimensional disc  whose boundary is composed by $\xi$,  the lift $\psi^0|_{[0,t_1^0]}$, the lift $\psi^1|_{[0,t_1^1]}$,
and a path $\xi^1=(\alpha^1,\beta^1):[0,1] \rightarrow T^*M$ whose projection is contained in $\Gamma^{J(i_0,j_0)}$
and which connects $\psi^0(t_1^0)$ to $\psi^1(t_1^1)$ (see Figure 5).  Thus, by Stokes' Theorem we have
$$
\int_{\mathcal{S}^0} \omega = \int_{0}^1 \beta(s) \cdot \dot{\alpha}(s) \, ds +  \int_{0}^{t_1^1} p^1(t) \cdot \dot{\gamma}^1(t) \, dt - \int_{0}^1 \beta^1(s) \cdot \dot{\alpha}^1(s) \, ds -  \int_{0}^{t_1^0} p^0(t) \cdot \dot{\gamma}(t) \, dt.
$$
Since $\gamma$ and $\gamma^1$ are both singular horizontal paths we have $p^0(t) \cdot \dot{\gamma}(t) =  p^1(t) \cdot \dot{\gamma}^1(t)=0$ for all $t\in [0,1]$. Moreover, since the derivative of the lifts $\psi^s$ is always contained in the kernel of $\omega_{\vert \Delta^{\perp}}$ (see Appendix \ref{SECSingular}), we have $\int_{\mathcal{S}^0} \omega=0$. As a consequence, we infer that
$$
\int_{0}^1 \beta^1(s) \cdot \dot{\alpha}^1(s) \, ds = \int_{0}^1 \beta(s) \cdot \dot{\alpha}(s) \, ds =A.
$$
Repeating this argument and recalling \eqref{eq:psi bounded}, we get a sequence of arcs $\xi^k=(\alpha^k,\beta^k):[0,1] \rightarrow T^*M$ such that 
$$
\int_{0}^1 \beta^k(s) \cdot \dot{\alpha}^k(s) \, ds =A>0 \qquad \forall\, k \in \N,
$$ 
and 
$$
{\rm length}^g(\alpha^k) \rightarrow 0 \quad \mbox{and} \quad |\beta^k(s)|^* \leq \tilde{K} \qquad \forall \, k \in \N.
$$
This provides the desired contradiction, which proves the result.

\end{proof}

The proof of Theorem \ref{THMSard}  is given hereafter  as a consequence of both Proposition \ref{prop:TopDichSing} and Proposition \ref{prop:MonoFiniteLenght}.

\subsection{Proof of Theorem \ref{THMSard}}

Before starting the proof let us summarize the different types of points $y\in \Sigma$ that can be crossed by a singular horizontal path. 
We distinguish four cases. \\

\noindent First case: $y\in \Sigma^2$.\\
The line foliation is regular in a neighborhood of $y$, so there is an analytic curve such that any singular path containing $y$ is locally contained in this curve. \\

\noindent Second case: $y\in \widetilde{\Sigma}^1_{tan}:=\Sigma^1_{tan}\setminus \widetilde{\Sigma}^0$.\\
By Proposition \ref{prop:TopDichSing} and the fact that $\widetilde{\Sigma}^0$ is locally finite, any singular path passing through $y$ is contained in $\widetilde{\Sigma}^1_{tan}$, that is locally analytic. \\

\noindent Third case: $y\in \widetilde{\Sigma}^0$.\\
The singular paths that contain $y$ are either the branches of ${\Sigma}^1_{tan}$ or the characteristic singular paths. In the first case, these branches are actually contained inside $\widetilde \Sigma^1_{tan}$ with the exception of $y$. In the second case, there are only finitely many characteristic singular paths by Proposition \ref{prop:TopDichSing}, and they are semianalytic by Proposition \ref{prop:characteristic-semianalytic}.  
 \\

\noindent Fourth case: $y\in \Sigma^1_{tr}$.\\
By Lemma \ref{rk:LocalTriviality}, there are finitely many semianalytic  singular horizontal curves that can cross $y$. \\

In conclusion, if we travel along a given singular path $\gamma:[0,1]\rightarrow M$ then bifurcation points may happen only when $\gamma$ crosses the set 
$\widetilde{\Sigma}^0 \cup \Sigma^1_{tr}$. Since $\mathcal{X}^{x,L}_{\Delta,g}$ 
is compact, there are only finitely many points of $\widetilde{\Sigma}^0$ to consider. Moreover, by Lemma \ref{rk:LocalTriviality},  from every bifurcation point in $\Sigma^1_{tr}$  there are only finitely many curves exiting from it. 
By Proposition \ref{prop:TopDichSing} any singular horizontal path interesect $\Sigma^1_{tr}$ finitely many times, but we need to  show that 
the intersection of $\mathcal{X}^{x,L}_{\Delta,g}$ with $\Sigma^1_{tr}$ is finite.  This follows from the fact that $\mathcal{X}^{x,L}_{\Delta,g}$ can be 
constructed from finitely many singular path emanating from $x$, by successive 
finite branching at the points of $\widetilde{\Sigma}^0 \cup \Sigma^1_{tr}$ met by 
the paths.  Let us present this argument precisely.

We associate to $\mathcal{X}^{x,L}_{\Delta,g}$ a tree $T$ constructed 
recursively as follows.  Let the initial vertex $v_0$ of the tree  
represent the point $x$ and let the edges from $v_0$ be in one-to-one correspondence with different singular horizontal paths starting from $x$.  
If such path arrives to a branching point, that is a point of $\widetilde\Sigma_0\cup \Sigma^1_{tr}$, we represent this point as another vertex of the tree 
(even if this point is again $x$).  If a singular path does not arrive at 
$\widetilde\Sigma_0\cup \Sigma^1_{tr}$ we just add formally a (final) vertex.  
In this way we construct  a connected (a priori infinite) locally finite tree.  
We note that any injective singular horizontal path starting at $x$, 
of length bounded by $L$, is represented in $T$ by a finite simple path of the tree (a path with no repeated vertices).  

Suppose, by contradiction, that $T$ is infinite.  
By K\"onig's Lemma (see, {\it e.g.} \cite{wilson96}), the tree $T$ contains a simple path $\omega_\infty$ that starts at $v_0$ and continues from it through infinitely many vertices.  Such path corresponds to a singular horizontal trajectory 
$\gamma_\infty$ that passes infinitely many times through $\widetilde\Sigma_0\cup \Sigma^1_{tr}$. Since any finite subpath of $\omega_\infty$ corresponds to 
a singular horizontal path of length bounded by $L$, $\gamma_\infty$ itself has length bounded by $L$ and crosses infinitely many times $\Sigma^1_{tr}$ (a finite length path cannot pass infinitely many times through 
$\widetilde\Sigma_0\cap \mathcal{X}^{x,L}_{\Delta,g}$ that is finite). 
Hence:\\
- either  $\gamma_\infty$ is monodromic of finite length, and this  
 contradicts Proposition \ref{prop:TopDichSing};\\
 - or the limit point of  $\gamma_\infty$ belongs to $\Sigma^1_{tr}$, which contradicts Lemma \ref{rk:LocalTriviality}. \\
  Therefore, the tree $T$ is finite, and $\mathcal{X}^{x,L}_{\Delta,g}$ consist of finitely many singular horizontal curves.  \\

The last part of Theorem \ref{THMSard} follows from the fact that any smooth manifold can be equipped with a complete Riemannian metric (see \cite{no60}).

\section{Singularities of the characteristic line-foliation}\label{sec:SingCharc}

\subsection{Divergence property}\label{sec:divergence}

In this subsection we introduce some basic results about the divergence of vector fields. The subsection follows a slightly more general setting than the previous section, but which relates to the study of the Sard Conjecture via the local model given in Remark \ref{rk:LocalDescriptMartinet}(i).

We start by considering a nonsingular analytic surface ${\mathscr S}$ with a volume form $\omS$. Denote by $\mathcal{O}_{\mathscr S}$ the sheaf of analytic functions over ${\mathscr S}$. We note that there exists a one-to-one correspondence between $1$-differential forms $\eta \in \Omega^1({\mathscr S})$ and vector fields $\mathcal Z \in Der_{\mathscr S}$ given by
$$
\mathcal Z \longleftrightarrow \eta \qquad \text { if } \eta = i_{{\mathcal Z}}\omS.
$$
This correspondence gives the following formula on the divergence:
\[
\div_{\omS}({\mathcal Z})  \omS = d\eta .
\]
\begin{remark}[Basic properties]\label{rk:localDivergence}\hfill
\begin{itemize}
\item[(i)]
Suppose that $u,v$ are local coordinates on ${\mathscr S}$ such that $\omS = du\wedge dv$. Then the form 
$\eta = \alpha du + \beta dv$ corresponds to $\mathcal Z = \alpha \partial_y - \beta \partial_x$. 
\item[(ii)] Given an analytic function $f: S \to \mathbb{R}$, it holds 
 $$df\wedge \eta = df\wedge  i_{{\mathcal Z}} \omS =  i_{{\mathcal Z}} df \wedge \omS = \mathcal Z(f) \omS .$$
 \item[(iii)] The above results can be easily generalized to $d$-dimensional analytic manifolds, where the one-to-one correspondence is between $d-1$ forms and vector fields (that is, between $\Omega^{d-1}(M)$ and $Der_M$).
 \end{itemize}
\end{remark}

We denote by ${\mathcal Z}(\mathcal{O}_{\mathscr S})$ the ideal sheaf generated by the derivation ${\mathcal Z}$ applied to the analytic functions in $\mathcal{O}_{\mathscr S}$, that is, the ideal sheaf locally generated by the coefficients of ${\mathcal Z}$. In what follows, we  study closely the property $\div_{\omS}(\mathcal Z) \in \mathcal Z(\mathcal{O}_{\mathscr S})$, following \cite[Lemma 2.3 and 3.2]{BR}. The next result shows that the property is independent of the volume form.

\begin{lemma}[Intrinsicality]
Let $\omS$ and $\omS'$ be two volume forms over $\mathscr S$. Then $\div_{\omS}(\mathcal{Z}) \in \mathcal{Z}(\mathcal{O}_{\mathscr S})$ if and only if $\div_{\omS'}(\mathcal{Z})  \in \mathcal{Z}(\mathcal{O}_{\mathscr S})$.
\label{lem:intrinsic}
\end{lemma}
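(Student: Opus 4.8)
The plan is to reduce the statement to the elementary fact that two volume forms on an analytic surface differ by multiplication by a nowhere-vanishing analytic function, together with the transformation rule for the divergence under such a rescaling.

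First I would observe that the statement is local, since both conditions concern membership in the ideal sheaf $\mathcal{Z}(\mathcal{O}_{\mathscr S})$; so we may work in a chart of $\mathscr S$ with coordinates $u,v$. As $\omS$ and $\omS'$ are both volume forms, there is a unit $f \in \mathcal{O}_{\mathscr S}$ (an analytic function with $f(p) \neq 0$ at every point of the chart) such that $\omS' = f\,\omS$. Next I would derive the rescaling formula $\div_{f\omS}(\mathcal{Z}) = \div_{\omS}(\mathcal{Z}) + f^{-1}\mathcal{Z}(f)$. Writing $\eta = i_{\mathcal Z}\omS$, so that $i_{\mathcal Z}(f\omS) = f\eta$ by $\mathcal{O}_{\mathscr S}$-linearity of the interior product, the Leibniz rule and Remark \ref{rk:localDivergence}(ii) give
\[
\div_{f\omS}(\mathcal{Z})\, f\omS = d(f\eta) = df \wedge \eta + f\, d\eta = \mathcal{Z}(f)\,\omS + f\,\div_{\omS}(\mathcal{Z})\,\omS,
\]
where I used the identity $df \wedge \eta = \mathcal{Z}(f)\,\omS$ and the defining relation $d\eta = \div_{\omS}(\mathcal{Z})\,\omS$. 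Dividing by $f\omS$, which is legitimate because $f$ is a unit, yields the claimed formula.

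Finally, since $f$ is a unit, $f^{-1} \in \mathcal{O}_{\mathscr S}$, and $\mathcal{Z}(f)$ belongs to $\mathcal{Z}(\mathcal{O}_{\mathscr S})$ by the very definition of the latter as the ideal sheaf generated by $\mathcal{Z}$ applied to analytic functions; as $\mathcal{Z}(\mathcal{O}_{\mathscr S})$ is an $\mathcal{O}_{\mathscr S}$-module, $f^{-1}\mathcal{Z}(f) \in \mathcal{Z}(\mathcal{O}_{\mathscr S})$. Hence $\div_{\omS'}(\mathcal{Z})$ and $\div_{\omS}(\mathcal{Z})$ differ by an element of $\mathcal{Z}(\mathcal{O}_{\mathscr S})$, which gives both implications simultaneously. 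I do not expect any genuine obstacle here: the only points requiring minor care are justifying that two volume forms differ by a \emph{unit} rather than merely a nonzero function (immediate, since a volume form is nowhere vanishing) and noting that the whole argument is purely local, so no global triviality of the canonical bundle of $\mathscr S$ is needed.
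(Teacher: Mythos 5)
Your proof is correct and follows essentially the same route as the paper's: both reduce to the local identity $\div_{\omS'}(\mathcal Z) = \div_{\omS}(\mathcal Z) + \mathcal Z(f)/f$ for $\omS' = f\omS$ with $f$ a unit, obtained by the same Leibniz-rule computation, and both conclude that $\mathcal Z(f)/f$ lies in the ideal sheaf $\mathcal Z(\mathcal O_{\mathscr S})$.
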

\begin{proof}
Given a point $p \in \mathscr S$, there exists an open neighborhood $U$ of $p$ and a smooth function $F: U \to \mathbb{R}$ which is everywhere non-zero and such that $\omS' = F\cdot  \omS$ in $U$. Therefore,
\[
\begin{aligned}
\div_{\omS'}(\mathcal{Z}) \cdot  \omS' &= d\left( i_{\mathcal{Z}}\omS'  \right) = d\left( F \cdot  i_{\mathcal{Z}}\omS  \right) =  \left[\mathcal{Z}(F)/F  + \div_{\omS}(\mathcal{Z}) \right]\omS',
\end{aligned}
\]
and we conclude easily. 
\end{proof}

Lemma \ref{lem:final} below illustrates the importance of this property; in its statement we use the notion of elementary singularities (see, e.g. \cite[Definition 4.27]{IY})), that we recall in Appendix \ref{app:ReductionLineFoliations} (Definition \ref{def:ElementarySing}). 

\begin{lemma}[Final Singularities]\label{lem:final}
Let $\mathcal{Z}$ be a real analytic vector-field defined in an open neighborhood $U \subset \mathbb{R}^2$ of the origin and $\omega_U$ to be a volume form over $U$. Let $(x,y)$ be a coordinate system defined over $U$ and suppose that: 
\begin{itemize}
\item[(i)] $\div_{\omega_U}(\mathcal{Z}) \in \mathcal{Z}(\mathcal{O}_{U})$; 
\item[(ii)] $\mathcal{Z} = x^{\alpha}y^{\beta}\, \widetilde{\mathcal{Z}}$, for some $\alpha$ and $\beta \in \mathbb{N}$, where $\widetilde{\mathcal{Z}}$ is either regular, or its singular points are isolated elementary singularities.
\end{itemize}
Then the vector field $\widetilde{\mathcal{Z}}$ is tangent to the set $\{x^{\alpha}y^{\beta}=0\}$ and all of its singularities are saddles.
\end{lemma}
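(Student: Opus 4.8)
The plan is to work locally and combine the hypothesis on the divergence with the classification of elementary singularities. First I would use Lemma~\ref{lem:intrinsic} to reduce to a convenient volume form: since the property $\div_{\omega_U}(\mathcal{Z}) \in \mathcal{Z}(\mathcal{O}_U)$ is independent of the choice of volume form, I may take $\omega_U = dx\wedge dy$ in the given coordinate system, so that if $\mathcal{Z} = a(x,y)\,\partial_x + b(x,y)\,\partial_y$ then $\div_{\omega_U}(\mathcal{Z}) = \partial_x a + \partial_y b$, while $\mathcal{Z}(\mathcal{O}_U)$ is the ideal $(a,b)$. Writing $\mathcal{Z} = x^\alpha y^\beta \widetilde{\mathcal{Z}}$ with $\widetilde{\mathcal{Z}} = \tilde a\,\partial_x + \tilde b\,\partial_y$ (so $a = x^\alpha y^\beta \tilde a$, $b = x^\alpha y^\beta \tilde b$), a direct computation gives
\[
\div_{\omega_U}(\mathcal{Z}) = x^\alpha y^\beta\Big(\partial_x \tilde a + \partial_y \tilde b\Big) + \alpha\, x^{\alpha-1} y^\beta \tilde a + \beta\, x^\alpha y^{\beta-1}\tilde b.
\]

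Next I would extract the tangency statement. The hypothesis says $\div_{\omega_U}(\mathcal{Z}) = a\, p + b\, q = x^\alpha y^\beta(\tilde a\, p + \tilde b\, q)$ for some analytic $p,q$. Comparing with the formula above, the term $\alpha\,x^{\alpha-1}y^\beta\tilde a$ must be divisible by $x^\alpha$ modulo the other terms; more precisely, reducing the identity modulo $x$ (assuming $\alpha\ge 1$) kills every term carrying a factor $x^\alpha$ and leaves $\alpha\, x^{\alpha-1}y^\beta\tilde a \equiv 0 \pmod{x^\alpha}$, forcing $x \mid \tilde a$. Symmetrically, if $\beta\ge 1$ then $y\mid \tilde b$. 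This is exactly the assertion that $\widetilde{\mathcal{Z}}$ is tangent to each coordinate axis appearing in $x^\alpha y^\beta$, hence to $\{x^\alpha y^\beta = 0\}$. (When $\alpha=0$ or $\beta=0$ the corresponding condition is vacuous and there is nothing to check.)

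Finally I would rule out nodes and foci among the singularities of $\widetilde{\mathcal{Z}}$. Let $p_0$ be a singular point of $\widetilde{\mathcal{Z}}$, which by hypothesis (ii) is an isolated elementary singularity; by the tangency just established, $p_0$ lies on $\{x^\alpha y^\beta = 0\}$ whenever $\alpha+\beta>0$, and one shows $\widetilde{\mathcal{Z}}$ still satisfies a divergence identity of the same type locally near $p_0$ (again using Lemma~\ref{lem:intrinsic} and the tangency, which lets one divide the previous identity by $x^\alpha y^\beta$ and check the remainder lies in the ideal $(\tilde a, \tilde b)$). For an elementary singularity, being a saddle versus a node/focus is detected by the eigenvalues $\mu_1,\mu_2$ of the linear part: it is a (non-degenerate or saddle-node) saddle precisely when $\mu_1\mu_2 \le 0$ with not both zero, whereas nodes and foci have $\mu_1\mu_2 > 0$ (or complex eigenvalues with nonzero real part, forcing $\mu_1 + \mu_2 \ne 0$). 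The point is that $\mu_1 + \mu_2 = \div(\widetilde{\mathcal{Z}})(p_0)$, and the divergence identity $\div(\widetilde{\mathcal{Z}}) \in (\tilde a, \tilde b)$ forces $\div(\widetilde{\mathcal{Z}})$ to vanish at the singular point $p_0$ (where $\tilde a(p_0) = \tilde b(p_0) = 0$). Hence $\mu_1 + \mu_2 = 0$, which excludes nodes and foci and, among elementary singularities, leaves exactly the saddle case (including the saddle-node, which is the degenerate elementary singularity with one zero eigenvalue — here one still has a hyperbolic-sector/saddle-type local picture, consistent with the terminology adopted in Appendix~\ref{app:ReductionLineFoliations}).

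The step I expect to be the main obstacle is the reduction of the divergence property from $\mathcal{Z}$ to $\widetilde{\mathcal{Z}}$ near a singular point lying on $\{x^\alpha y^\beta = 0\}$: one must be careful that dividing the ideal membership $\div_{\omega_U}(\mathcal{Z}) \in (a,b)$ by the monomial factor $x^\alpha y^\beta$ genuinely yields $\div_{\omega_U}(\widetilde{\mathcal{Z}}) \in (\tilde a,\tilde b)$, which is where the tangency relations $x\mid\tilde a$, $y\mid\tilde b$ (and a short bookkeeping of which monomials are divisible by what) are essential, together with an application of Lemma~\ref{lem:intrinsic} to absorb the extra logarithmic-derivative term $\mathcal{Z}(x^\alpha y^\beta)/(x^\alpha y^\beta) = \alpha\,\tilde a/x + \beta\,\tilde b/y$, which is analytic precisely because of the tangency. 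Once this local divergence property is in hand, the eigenvalue computation $\mu_1 + \mu_2 = 0$ is immediate and the classification of elementary singularities does the rest.
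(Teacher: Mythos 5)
Your reduction to $\omega_U = dx\wedge dy$ via Lemma \ref{lem:intrinsic} and the tangency argument are sound and essentially identical to the paper's: both divide the divergence identity by $x^\alpha y^\beta$ and conclude that the pole-producing terms $\alpha\tilde a/x$ and $\beta\tilde b/y$ must be analytic, hence $x\mid\tilde a$ when $\alpha\ne 0$ and $y\mid\tilde b$ when $\beta\ne 0$.

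The classification of the singularity, however, contains a genuine gap. Your strategy is to show $\div_{\omega_U}(\widetilde{\mathcal{Z}})\in(\tilde a,\tilde b)$ and deduce that the trace $\mu_1+\mu_2=\div_{\omega_U}(\widetilde{\mathcal{Z}})(p_0)$ vanishes at a singular point. This is false when $\alpha+\beta>0$. Dividing the original identity by $x^\alpha y^\beta$ gives
\[
\div_{\omega_U}(\widetilde{\mathcal{Z}}) + \alpha\frac{\tilde a}{x} + \beta\frac{\tilde b}{y} = f\tilde a + g\tilde b,
\]
and while $\tilde a/x,\ \tilde b/y$ are analytic by the tangency, they need not lie in $(\tilde a,\tilde b)$ and need not vanish at the origin: $(\tilde a/x)(0)=\partial_x\tilde a(0)$ and $(\tilde b/y)(0)=\partial_y\tilde b(0)$. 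Evaluating at the singular point therefore yields the weighted relation
\[
(\alpha+1)\,\partial_x\tilde a(0) + (\beta+1)\,\partial_y\tilde b(0) = 0,
\]
not $\partial_x\tilde a(0)+\partial_y\tilde b(0)=0$. When $\alpha\ne\beta$ the trace is generically nonzero. Your suggestion to absorb the logarithmic term by switching volume form also cannot be implemented: the form $x^\alpha y^\beta\,\omega_U$ vanishes on the divisor, so it is not a volume form near the singular point and Lemma \ref{lem:intrinsic} does not apply there. What the paper does instead is to exploit the tangency a second time: since $x\mid\tilde a$ one has $\partial_y\tilde a(0)=0$, so the Jacobian is triangular and $\det\bigl(\mathrm{Jac}(\widetilde{\mathcal{Z}})(0)\bigr)=\partial_x\tilde a(0)\cdot\partial_y\tilde b(0)$; the weighted relation above then forces $\partial_x\tilde a(0)$ and $\partial_y\tilde b(0)$ to have opposite signs (both zero being excluded by the elementary hypothesis), hence $\det<0$ and the singularity is a saddle. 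Note that this also rules out saddle-nodes, so your allowance for them in the conclusion is unnecessary — and would in any case be inconsistent with the lemma as stated, which asserts genuine saddles.
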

\begin{proof}[Proof of Lemma \ref{lem:final}]
By Lemma \ref{lem:intrinsic}, up to shrinking $U$ we can suppose that $\omega_U = dx\wedge dy$. We denote by $A =\mathcal{Z}(x)$ and $B=\mathcal{Z}(y)$. By assumption (ii), these functions are divisible by $x^{\alpha}y^{\beta}$, namely $A=x^{\alpha}y^{\beta} \widetilde{A}$ and $B=x^{\alpha}y^{\beta}\widetilde{B}$. By assumption (i), there exist smooth functions $f$ and $g$ such that
\begin{equation}\label{eq:1}
\partial_xA + \partial_yB = f\cdot A + g\cdot B \quad \text{ and } \quad \alpha \cdot \widetilde{A}/x + \partial_x\widetilde{A} +  \beta\cdot \widetilde{B}/y + \partial_y\widetilde{B} = f\cdot \widetilde{A} + g\cdot \widetilde{B}
\end{equation}
In particular  $\alpha \widetilde{A}/x + \beta \widetilde{B}/y $ does not have poles, which implies that $\widetilde{A}$ is divisible by $x$ if $\alpha \neq 0$, and $\widetilde{B}$ is divisible by $y$ if $\beta \neq 0$. In other words, $\widetilde{\mathcal{Z}}$ is tangent to $\{x^{\alpha}y^{\beta}=0\}$.

Without loss of generality, we can suppose that the origin is the only singularity of $\widetilde{\mathcal{Z}}$. We consider the determinant and the trace of the Jacobian of $\widetilde{\mathcal{Z}}$ at the origin:
\[
\begin{aligned}
\det\bigl(\mbox{Jac}(\widetilde{\mathcal{Z}})(0)\bigr)&= \partial_x\widetilde{A}(0)\cdot \partial_y\widetilde{B}(0) - \partial_y\widetilde{A}(0)\cdot \partial_x\widetilde{B}(0),\\
\mbox{tr}\bigl(\mbox{Jac}(\widetilde{\mathcal{Z}})(0)\bigr)&= \partial_x\widetilde{A}(0)+ \partial_y\widetilde{B}(0).
\end{aligned}.
\]
In order to conclude, thanks to Remark \ref{rk:ElembentarySing}(i) it is enough to prove that $\det\bigl(\mbox{Jac}(\widetilde{\mathcal{Z}})(0)\bigr)<0$. We divide in two cases, depending on the value of $\alpha$ and $\beta$.

First, suppose that $\alpha=\beta=0$ (in particular $A = \widetilde{A}$ and $B=\widetilde{B}$). Then, thanks to \eqref{eq:1},
\[
\mbox{tr}\bigl(\mbox{Jac}(\widetilde{\mathcal{Z}})(0)\bigr) = \partial_xA(0) + \partial_yB(0) = 0.
\]
Since the origin is an elementary singularity of $\widetilde{\mathcal{Z}}$, using Remark \ref{rk:ElembentarySing}(ii) we conclude that the determinant is negative. Thus, the singularity is a saddle point.

Next, without loss of generality we suppose that $\alpha \neq 0$. In this case $x$ divides $\widetilde{A}$, which implies that $\partial_y\widetilde{A}(0)=0$ and $\partial_x\widetilde{A}(0)= \bigl(\widetilde{A}/x\bigr)(0)$. In particular, this yields
\begin{equation}
\label{eq:det}
\det\bigl(\mbox{Jac}(\widetilde{\mathcal{Z}})(0)\bigr) = \partial_x\widetilde{A}(0)\cdot \partial_y\widetilde{B}(0).
\end{equation}
Also, since $\partial_x\widetilde{A}(0)= \bigl(\widetilde{A}/x\bigr)(0)$, and either $\beta=0$ or $\partial_y\widetilde{B}(0)= \bigl(\widetilde{B}/y\bigr)(0)$, using \eqref{eq:1} we get
\[
(\alpha+1)\partial_x\widetilde{A}(0) + (\beta+1)\partial_y\widetilde{B}(0)=0.
\]
It follows that $\partial_x\widetilde{A}(0)$ and $\partial_y\widetilde{B}(0)$ have opposite signs (if they are both zero then the determinant and the trace are zero, contradicting the definition of elementary singularity), and therefore the determinant is negative (see \eqref{eq:det}). Once again, since the origin is an elementary singularity of $\widetilde{\mathcal{Z}}$, using  Remark \ref{rk:ElembentarySing}(ii) we conclude that the singularity is a saddle point.
\end{proof}

Next, suppose that $M$ is a 3-dimensional analytic manifold and denote by $\omega_M$ its volume form. We now start the study over the Martinet surface $\Sigma$, cf. Remark \ref{rk:LocalDescriptMartinet}(i).

Let $\delta\in \Omega^1(M)$ be an everywhere non-singular analytic $1$-form and denote by $h$ the analytic function defined as in equation \eqref{eq:martinetequation}. Denote by $*: \Omega^1 (M) \to \Omega ^2(M)$ the Hodge star operator, cf. \cite[Ch. V]{wells}. We start by a known characterization of $d\delta$ in terms of $\delta$ and $*\delta$:

\begin{lemma}\label{lem:DivFirstFormula}
There exists an analytic form $a\in \Omega^1 (M)$ such that:
\begin{align}\label{eq:decomposition2}
d\delta  = a\wedge \delta + h \langle \delta , *\delta \rangle^{-1} (*\delta).  
\end{align}
\end{lemma}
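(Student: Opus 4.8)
The statement to be proved is Lemma \ref{lem:DivFirstFormula}: the existence of an analytic $1$-form $a$ with $d\delta = a\wedge\delta + h\langle\delta,*\delta\rangle^{-1}(*\delta)$. The natural idea is to decompose $d\delta$, which is a $2$-form on the $3$-manifold $M$, into a part divisible by $\delta$ and a ``transverse'' part. Since $\delta$ is everywhere nonsingular, at each point the $2$-forms of the shape $a\wedge\delta$ span the $2$-dimensional subspace of $\Lambda^2T^*_pM$ consisting of $2$-forms annihilated by wedging with $\delta$ once more in a suitable sense — more precisely, $\{a\wedge\delta : a\in T^*_pM\}$ is exactly the kernel of the map $\omega\mapsto \omega\wedge\delta \in \Lambda^3 T^*_pM$ restricted appropriately, and this kernel has codimension $1$ in $\Lambda^2T^*_pM$. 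So $d\delta$ differs from something in this kernel by a multiple of any fixed $2$-form not in the kernel; the form $*\delta$ is a convenient such representative, since $\delta\wedge *\delta = \langle\delta,*\delta\rangle\,\omega_M \neq 0$ pointwise (here I am using that $\delta$ is nonsingular so $\langle\delta,\delta\rangle > 0$ and $*$ is, up to the metric normalization, an isomorphism — one should check the precise constant, but the key point is $\langle\delta,*\delta\rangle$, interpreted via the pairing making $\delta\wedge *\delta$ a volume-form multiple, is nowhere zero).

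\textbf{Key steps.} First I would fix the scalar $\lambda := h\langle\delta,*\delta\rangle^{-1}$ and observe it is a well-defined analytic function on $M$ since $\langle\delta,*\delta\rangle$ is a nowhere-vanishing analytic function (it equals, up to sign, $|\delta|_g^2$, which is positive because $\delta$ is nonsingular). Second, I would consider the $2$-form $\Theta := d\delta - \lambda\,(*\delta)$ and show $\Theta\wedge\delta = 0$: indeed $d\delta\wedge\delta = \delta\wedge d\delta = h\,\omega_M$ by the Martinet equation \eqref{eq:martinetequation}, while $(*\delta)\wedge\delta = \langle\delta,*\delta\rangle\,\omega_M$, so $\lambda\,(*\delta)\wedge\delta = h\,\omega_M$ as well, and the two cancel. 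Third, I invoke the pointwise linear-algebra fact (a division lemma for $1$-forms): on a manifold, if $\delta$ is a nowhere-vanishing $1$-form and $\Theta$ is a $2$-form with $\Theta\wedge\delta = 0$, then $\Theta = a\wedge\delta$ for some $1$-form $a$; moreover $a$ can be chosen analytic if $\delta$ and $\Theta$ are analytic. This is a standard de Rham / Cartan-type division result — one proves it locally by completing $\delta$ to a coframe $(\delta,\theta_2,\theta_3)$, expanding $\Theta$ in the induced basis of $2$-forms, and reading off that $\Theta\wedge\delta = 0$ forces the $\theta_2\wedge\theta_3$ coefficient to vanish, leaving exactly $\Theta = a\wedge\delta$; analyticity of the coframe (obtainable since $\delta$ is analytic and nonsingular) gives analyticity of $a$. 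Combining the second and third steps yields $d\delta = a\wedge\delta + \lambda\,(*\delta)$, which is the claim.

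\textbf{Main obstacle.} The only genuinely delicate point is verifying that $\langle\delta,*\delta\rangle$ — and hence the normalizing factor — is correctly interpreted and nowhere zero, together with pinning down the convention for $*$ used in \cite{wells} so that $\delta\wedge *\delta$ really is $\langle\delta,*\delta\rangle\,\omega_M$ (or whatever the paper's normalization dictates); once the conventions are fixed this is immediate from $\delta$ being nonsingular. The division lemma for $1$-forms is classical and routine; I would cite it or include the one-paragraph local coframe argument. I expect the write-up to be short: fix $\lambda$, check $(d\delta - \lambda *\delta)\wedge\delta = 0$ via the Martinet equation, then apply division by $\delta$.
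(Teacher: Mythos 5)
Your proof is correct and takes essentially the same approach as the paper's: both rest on the nowhere-vanishing of $\langle\delta,*\delta\rangle$, the identity $\delta\wedge *\delta=\langle\delta,*\delta\rangle\,\omega_M$, the Martinet equation, and the pointwise fact that $2$-forms decompose as $a\wedge\delta$ plus a multiple of $*\delta$. The paper asserts the decomposition $d\delta = a\wedge\delta + b(*\delta)$ first and then solves for $b$ by wedging with $\delta$, whereas you fix $\lambda=b$ up front and apply the division lemma to $d\delta-\lambda(*\delta)$; this is just a reordering of the same computation, with your version making the division step explicit rather than implicit.
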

\begin{proof}[Proof of Lemma \ref{lem:DivFirstFormula}]
Since $\delta$ is nonsingular, the induced scalar product $\langle \delta , *\delta \rangle$ is a nowhere vanishing function  and we have 
\begin{equation}\label{eq:scalarproduct}
\delta \wedge *\delta  = \langle \delta , *\delta \rangle \omM.
\end{equation}
The form $d\delta$ can be decomposed  as
\begin{align}\label{eq:decomposition}
d\delta  = a\wedge \delta + b (*\delta)
\end{align}
where ${a}$ is an analytic $1$-form and $ b$  is an analytic function. Combining \eqref{eq:martinetequation}, \eqref{eq:scalarproduct}, and \eqref{eq:decomposition}, we deduce that
$h=  b \langle \delta , *\delta \rangle $, which proves \eqref{eq:decomposition2}.
\end{proof}

Now, we consider an analytic map $\pi: \mathscr S\to \Sigma \subset M$ from an analytic surface $\mathscr S$ to the Martinet surface $\Sigma$, and we set $\eta := \pi^* (\delta)$. It follows from Lemma \ref{lem:DivFirstFormula} that
\begin{align}\label{eq:relation}
d\eta = \tilde a \wedge \eta
\end{align}
with $\tilde a = \pi^* a$. Let $\mathcal Z$ be the vector field associated to $\eta$, and denote by $\mathcal{Z}(\pi)$ the ideal subsheaf of $\mathcal{Z}(\mathcal{O}_{\mathscr S})$ generated  by the derivation $\mathcal{Z}$ applied to the pullback by $\pi$ of analytic functions on $M$.  

\begin{remark}\label{rk:BasicRemarksPi}\hfill
\begin{itemize}
\item[(i)] For our applications, the map $\pi$ is either going to be an inclusion of the regular part of $\Sigma$ into $M$, or a resolution of singularities of (the analytic space) $\Sigma$ (cf. Theorem \ref{thm:RSgeneral}). 
\item[(ii)] If we write (locally) $\pi=(\pi_1, \pi_2, \pi_3)$, then $\mathcal{Z}(\pi)$ is locally generated by 
$\mathcal Z (\pi_1), \mathcal Z (\pi_2), \mathcal Z (\pi_3)$. 
\end{itemize}
\end{remark}

The next proposition shows that, in the local setting (following Remarks \ref{rk:LocalDescriptMartinet}(i) and \ref{rk:BasicRemarksPi}(i)), the property $\div_{\omS}({\mathcal Z}) \in {\mathcal Z}(\mathcal{O}_{\mathscr S})$ is always satisfied. This can be seen as a reformulation of \cite[Lemmas 2.3, 3.1, and 4.3]{BR}

\begin{proposition}[Divergence bound]\label{prop:DivProperty} 
Let $\eta \in \Omega^1(\mathscr S)$, and let $\mathcal Z$ be the vector field associated to $\eta$.
\begin{itemize}
\item[(i)] If $\eta \in \Omega^1(\mathscr S)$ satisfies \eqref{eq:relation}, then 
 $\div_{\omS}({\mathcal Z}) \in {\mathcal Z}(\mathcal{O}_{\mathscr S})$. 
\item[(ii)] If in addition $\eta = \pi^* (\delta)$,  then $\div_{\omS}({\mathcal Z}) \in \mathcal{Z}(\pi)$. In particular,  for every compact subset $\mathcal K\subset \mathscr S$  there is a constant $K>0$ such that
$$
\left|\div_{\omS}({\mathcal Z}) \right| \le K\left|\pi_* (\mathcal Z)\right|\qquad \text{ on $\mathcal K$}.
$$
\end{itemize}
\end{proposition}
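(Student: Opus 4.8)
The plan is to prove Proposition \ref{prop:DivProperty} in two parts, reducing (i) to the local computation encapsulated in Lemma \ref{lem:final}'s algebra and then bootstrapping (ii) from (i) together with the explicit form $\eta = \pi^*\delta$.

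\textbf{Part (i).} Since the statement is local, I would fix a point $p\in\mathscr S$ and choose analytic coordinates $(u,v)$ near $p$ in which $\omS = du\wedge dv$. Writing $\eta = \alpha\,du + \beta\,dv$, Remark \ref{rk:localDivergence}(i) identifies the associated vector field as $\mathcal Z = \alpha\,\partial_v - \beta\,\partial_u$, and the divergence formula reads $\div_{\omS}(\mathcal Z)\,\omS = d\eta$. The hypothesis \eqref{eq:relation}, i.e.\ $d\eta = \tilde a\wedge\eta$, then says precisely that $d\eta$ lies in the ideal of $2$-forms generated by $\eta$; concretely, if $\tilde a = f\,du + g\,dv$ then expanding $\tilde a\wedge\eta$ gives $d\eta = (f\beta - g\alpha)\,du\wedge dv$, so $\div_{\omS}(\mathcal Z) = f\beta - g\alpha$. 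Since $\mathcal Z(\mathcal O_{\mathscr S})$ is the ideal sheaf locally generated by the coefficients of $\mathcal Z$, namely by $\mathcal Z(u) = -\beta$ and $\mathcal Z(v) = \alpha$ (up to sign), the expression $f\beta - g\alpha$ is an analytic combination of these generators, hence $\div_{\omS}(\mathcal Z) \in \mathcal Z(\mathcal O_{\mathscr S})$. By Lemma \ref{lem:intrinsic} the conclusion is independent of the chosen volume form, which completes (i).

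\textbf{Part (ii).} Now assume in addition $\eta = \pi^*\delta$. The key point is to redo the computation above but keeping track of which functions arise as $\mathcal Z$ applied to \emph{pullbacks of functions on $M$}. The relation $d\eta = \tilde a\wedge\eta$ came from Lemma \ref{lem:DivFirstFormula}, where the $1$-form $\tilde a = \pi^* a$ is itself a pullback; working again in local coordinates $(u,v)$ on $\mathscr S$ with $\omS = du\wedge dv$ and writing $\pi = (\pi_1,\pi_2,\pi_3)$, one expresses the coefficients of $\eta = \pi^*\delta$ and of $\tilde a = \pi^* a$ in terms of the $\pi_i$, their partials, and the coefficients of $\delta, a$ evaluated along $\pi$. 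Substituting into $\div_{\omS}(\mathcal Z) = f\beta - g\alpha$ and carefully regrouping, every term turns out to be an $\mathcal O_{\mathscr S}$-linear combination of $\mathcal Z(\pi_1), \mathcal Z(\pi_2), \mathcal Z(\pi_3)$ — essentially because both $\alpha,\beta$ (coming from $\pi^*\delta$) and $f,g$ (coming from $\pi^* a$) are built out of the $\partial_u\pi_i, \partial_v\pi_i$, and the antisymmetric combination $f\beta - g\alpha$ reproduces exactly the $2\times 2$ minors that also appear in $\mathcal Z(\pi_i) = \alpha\,\partial_v\pi_i - \beta\,\partial_u\pi_i$. This shows $\div_{\omS}(\mathcal Z) \in \mathcal Z(\pi)$. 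For the final quantitative estimate, note that $\mathcal Z(\pi_i)$ is (the $i$-th component of) $\pi_*(\mathcal Z)$ read in a chart on $M$, so $|\mathcal Z(\pi_i)| \le C\,|\pi_*(\mathcal Z)|$ on a compact set, and since $\div_{\omS}(\mathcal Z) = \sum_i c_i\,\mathcal Z(\pi_i)$ with analytic coefficients $c_i$, taking $K = \max_i \sup_{\mathcal K}|c_i|\cdot(\text{const})$ gives $|\div_{\omS}(\mathcal Z)| \le K\,|\pi_*(\mathcal Z)|$ on $\mathcal K$.

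\textbf{Main obstacle.} The genuinely delicate step is the bookkeeping in Part (ii): one must verify that, after pulling back by the possibly-degenerate map $\pi$ (which, by Remark \ref{rk:BasicRemarksPi}(i), can be a resolution of singularities and hence non-immersive), the coefficients appearing in the expansion of $f\beta - g\alpha$ really are \emph{pullbacks of analytic functions on $M$} times the $\mathcal Z(\pi_i)$, rather than merely arbitrary analytic functions on $\mathscr S$ — this is what distinguishes membership in the subsheaf $\mathcal Z(\pi)$ from the weaker membership in $\mathcal Z(\mathcal O_{\mathscr S})$ already obtained in (i). The structural reason it works is that $\tilde a = \pi^* a$ is a pullback, so no ``new'' functions on $\mathscr S$ are introduced; making this precise is the heart of the argument, and it is exactly the point where one invokes the specific form of Lemma \ref{lem:DivFirstFormula} rather than just the abstract relation \eqref{eq:relation}.
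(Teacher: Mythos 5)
Your proposal is correct and takes essentially the same approach as the paper, though the paper is a bit slicker at the crucial step. For (ii), the paper does not carry out the coordinate regrouping you describe; it instead writes $a = \sum g_i\,dx_i$ so that $\tilde a = \pi^*a = \sum (g_i\circ\pi)\,d\pi_i$, and then applies Remark \ref{rk:localDivergence}(ii) with $f=\pi_i$ to get $d\pi_i\wedge\eta = \mathcal Z(\pi_i)\,\omS$ directly, so that
$$\div_{\omS}(\mathcal Z)\,\omS = d\eta = \tilde a\wedge\eta = \sum (g_i\circ\pi)\,d\pi_i\wedge\eta = \sum (g_i\circ\pi)\,\mathcal Z(\pi_i)\,\omS$$
with no need to expand $\alpha,\beta,f,g$ in terms of the partials of $\pi_i$. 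Your ``careful regrouping'' is precisely a coordinate verification of this identity, so it works, but the identity makes it immediate. One small overstatement in your write-up: you claim that both $\alpha,\beta$ \emph{and} $f,g$ being built from $\partial_u\pi_i,\partial_v\pi_i$ is what makes the regrouping work; in fact only the pullback structure on $\tilde a$ (i.e.\ on $f,g$) is used in the computation, since $f\beta - g\alpha = \sum_i(g_i\circ\pi)\bigl(\partial_u\pi_i\,\beta - \partial_v\pi_i\,\alpha\bigr) = -\sum_i(g_i\circ\pi)\,\mathcal Z(\pi_i)$ holds for arbitrary $\alpha,\beta$. The hypothesis $\eta = \pi^*\delta$ enters only to guarantee, via Lemma \ref{lem:DivFirstFormula}, that $\tilde a$ can be chosen of the form $\pi^*a$. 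Your Part (i) and the quantitative estimate at the end are fine and match the paper.
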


\begin{proof}[Proof of Proposition \ref{prop:DivProperty}]
Let  ${a}$ be as in \eqref{eq:decomposition},
and write it in local coordinates on $M$ 
as $a = \sum g_i dx_i$. Then $\pi^* (a)  = \sum (g_i\circ \pi)\,   d\pi_i$, which implies that, in local 
coordinates on $\mathscr S$, we have
$$
\div_{\omS}({\mathcal Z})  \omS = \sum (g_i\circ \pi)  d\pi_i\wedge \eta =  \sum  (g_i\circ \pi) \mathcal Z (\pi_i)  \omS.
$$
The bound follows from the fact that $\pi_* (\mathcal Z) = ( \mathcal Z (\pi_1), \mathcal Z (\pi_2), \mathcal Z (\pi_3))$.  
\end{proof}

\subsection{Resolution of singularities}\label{sec:ResSing}

Here we follow the notation and framework introduced in Appendix \ref{app:ResolutionOfSingularities} and in subsections \ref{sec:framework} and \ref{sec:CharacteristicFoliation}. All definitions and concepts concerning resolution of singularities (e.g. blowings-up, simple normal crossing divisors, strict transforms, etc) are recalled in Appendix \ref{app:ResolutionOfSingularities}.

\begin{theorem}\label{thm:RSgeneral}
There exist an analytic surface $\mathscr{S}$, and a simple normal crossing divisor $E$ over $\mathscr{S}$ (that is, a locally finite union of irreducible smooth divisors, see subsection \ref{SSEC:SNC}), and a proper analytic morphism $\pi : \mathscr{S} \to \Sigma$ (a sequence of admissible blowings-up with exceptional divisor $E$, see Definition \ref{def:SeqAdmBlowings-up}) such that:
\begin{itemize}
\item[(I)] The restriction of $\pi$ to $\mathscr{S}\setminus E$ is a diffeomorphism onto its image $\Sigma \setminus S$ (c.f. Lemma \ref{lem:CharacteristicFoliation}).

\item[(II)] Denote by $\widetilde{\mathscr{L}}$ the strict transform of the foliation $\mathscr{L}$ (cf. subsection \ref{app:ReductionLineFoliations}). Then all singularities of $\widetilde{\mathscr{L}}$ are saddle points.

\item[(III)] The exceptional divisor $E$  is given by the union of two locally finite sets of divisors $E_{tan}$ and $E_{tr}$, where $E_{tan} \cap E_{tr}$ is a locally finite set of points, such that $\widetilde{\mathscr{L}}$ is tangent to $E_{tan}$ and $\widetilde{\mathscr{L}}$ is everywhere transverse to $E_{tr}$. Furthermore, the log-rank of $\pi$ over $E_{tr}\setminus E_{tan}$ is constant equal to $1$ (we recall the definition of log-rank in Appendix \ref{app:HP}).

\item[(IV)] At each point $\bar{z}\in E_{tan}$, there exists an open neighborhood $U_{\bar{z}}$ of $\bar{z}$ such that: 
\begin{itemize}
\item[(i)] Suppose that there exists only one irreducible component of $E_{tan}$ passing through $\bar{z}$. Then there exists a coordinate system $(u,v)$ centered at $\bar{z}$ and defined over $U_{\bar{z}}$, such that:
\begin{itemize}
\item[(a)] The exceptional divisor $E_{tan}$ restricted to $U_{\bar{z}}$ coincides with $\{u=0\}$.

\begin{figure}\label{fig6}
\begin{center}
\includegraphics[width=6cm]{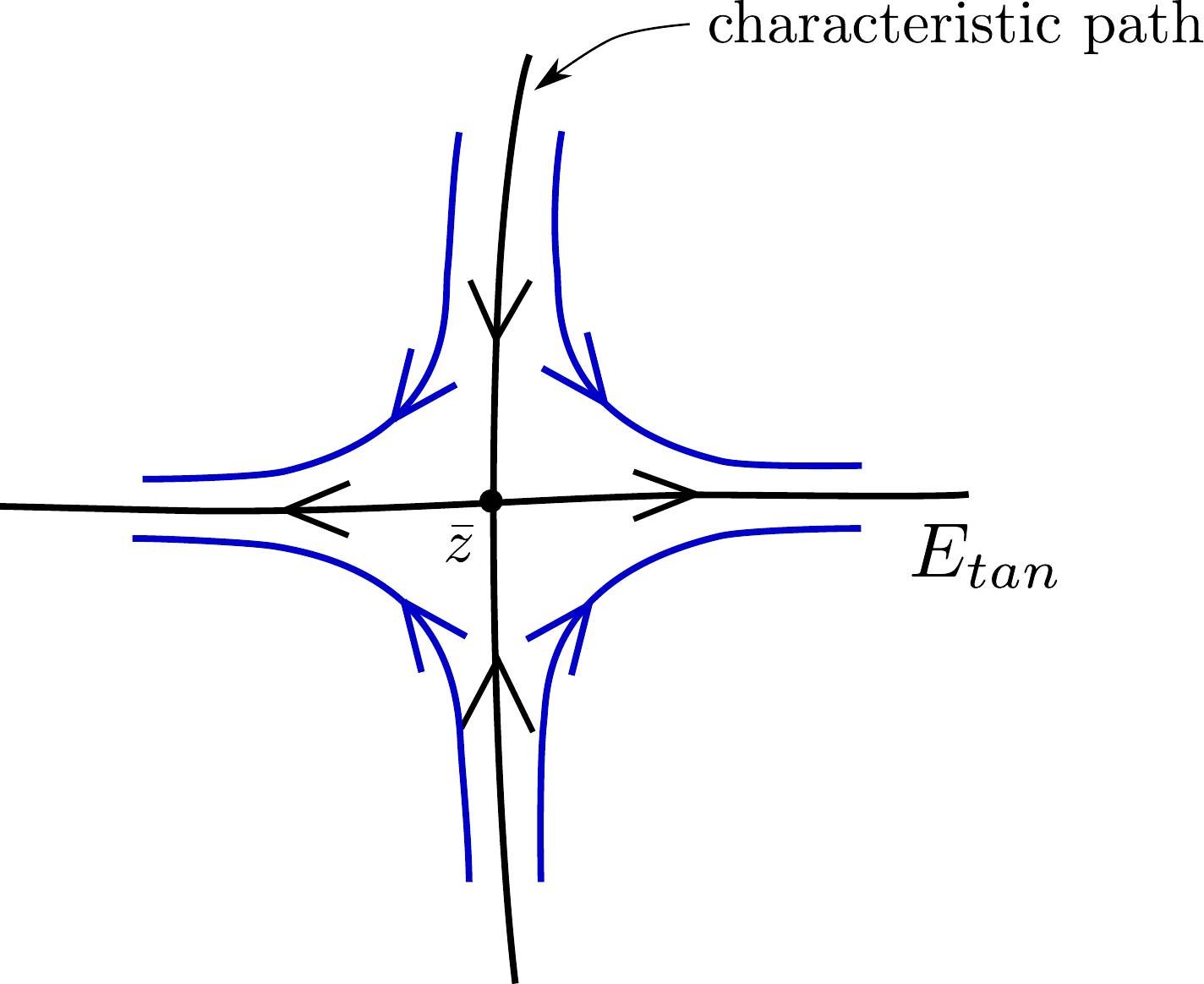}
\caption{A saddle point as in the first case of Theorem \ref{thm:RSgeneral}(IV.i.b)}
\end{center}
\end{figure}

\item[(b)] Either $\bar{z}$ is a saddle point of $\widetilde{\mathscr{L}}$ (see Figure 6); or at each half-plane (bounded by $E_{tan}$) there exist two smooth analytic semi-segments $\Lambda_{\bar{z}}^1$ and $\Lambda_{\bar{z}}^2$ which are transverse to $\widetilde{\mathscr{L}}$ and $E_{tan}$, such that the flow (of a local generator ${\widetilde{\mathcal{Z}}}$) associated to $\widetilde{\mathscr{L}}$ gives rise to a bi-analytic transition map
\[
\phi_{\bar{z}} : \Lambda_{\bar{z}}^1 \to \Lambda_{\bar{z}}^2,
\]
and there exists a rectangle $V_{\bar{z}}$ bounded by $E_{tan}$, $\Lambda_{\bar{z}}^1$, $\Lambda_{\bar{z}}^2$ and a regular leaf $\mathcal{L} \not\subset E_{tan}$ of $\widetilde{\mathscr{L}}$ such that $\bar{z} \in \partial V_{\bar{z}} \setminus (\Lambda_{\bar{z}}^1 \cup \Lambda_{\bar{z}}^2 \cup \mathcal{L})$ (see Figure 7).

\begin{figure}\label{fig7}
\begin{center}
\includegraphics[width=5cm]{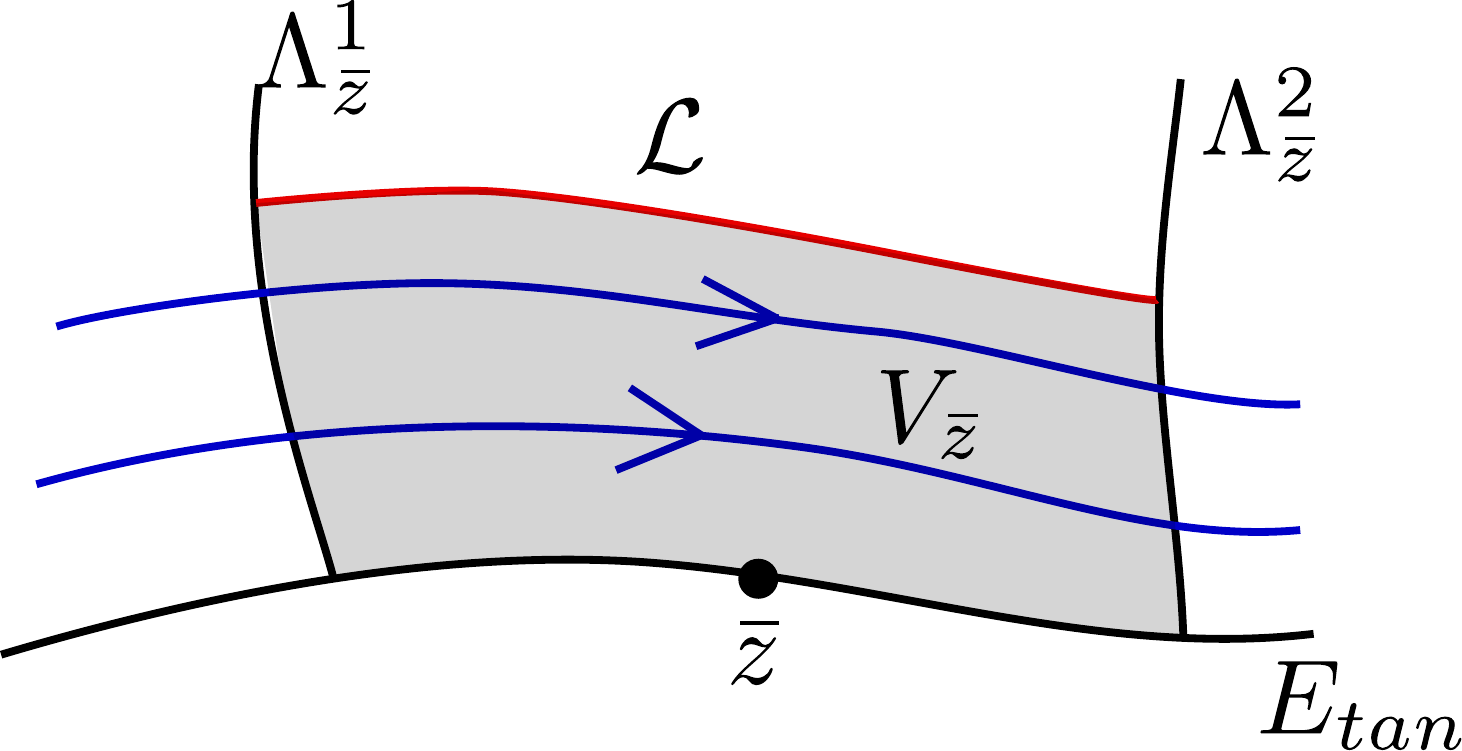}
\caption{Second case of Theorem \ref{thm:RSgeneral}(IV.i.b)}
\end{center}
\end{figure}
\begin{figure}\label{fig7bis}
\begin{center}
\includegraphics[width=5cm]{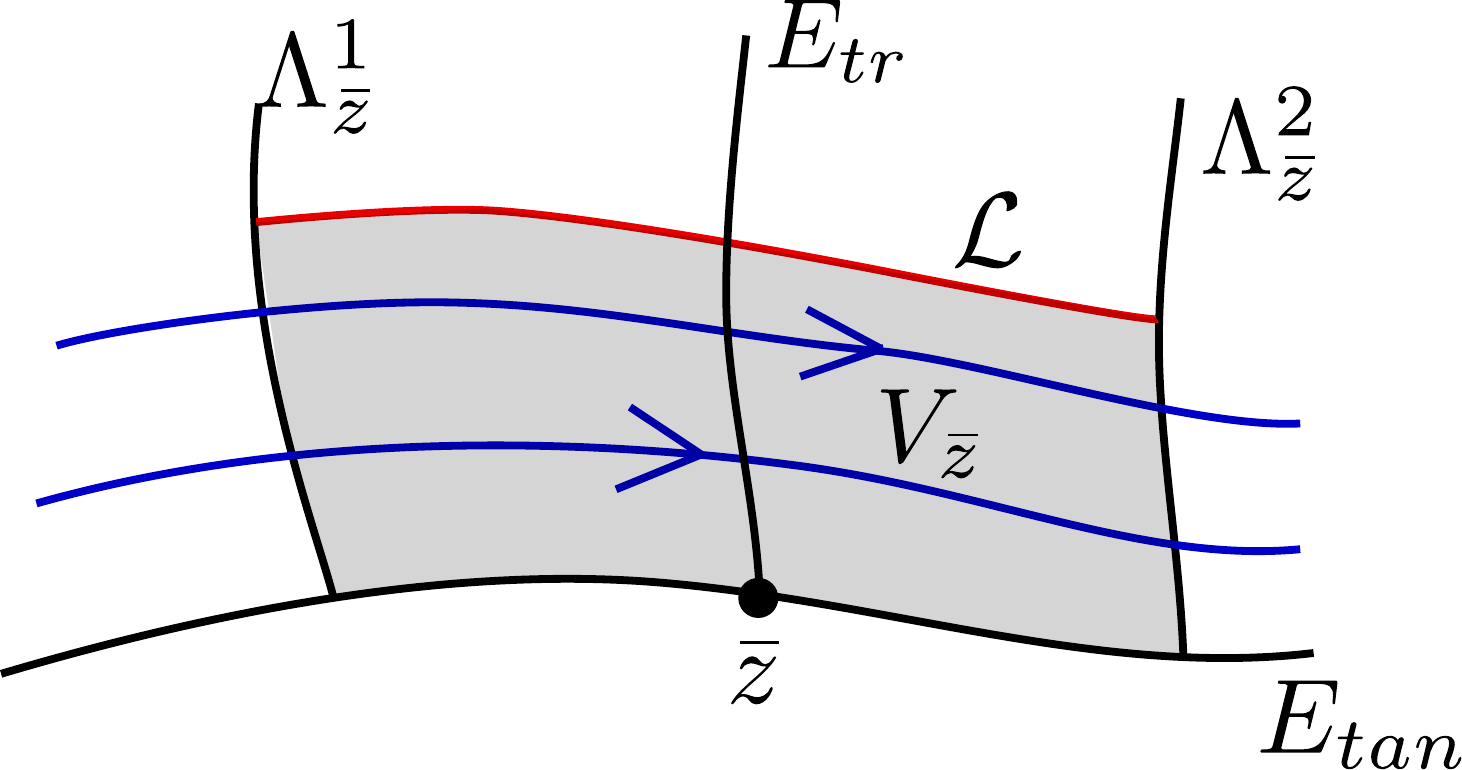}
\caption{Case  (IV.i.c) in Theorem \ref{thm:RSgeneral}}
\end{center}
\end{figure}


\item[(c)] If $\bar{z} \in E_{tr}$, then $\bar{z}$ is a regular point of $\widetilde{\mathscr{L}}$ and $E_{tr} \cap U_{\bar{z}} = \{v=0\}$ does not intersect $\Lambda_{\bar{z}}^1$ nor $\Lambda_{\bar{z}}^2$. Furthermore, the map $\phi_{\bar{z}}$ is the composition of two analytic maps  (see Figure 8):
\[
\phi_{\bar{z}}^1: \Lambda_{\bar{z}}^1 \to  E_{tr}, \qquad \phi_{\bar{z}}^2: E_{tr} \to \Lambda_{\bar{z}}^2.
\]
\end{itemize}

\item[(ii)] Suppose that there exists two irreducible components of $E_{tan}$ passing through $\bar{z}$. Then there exists a coordinate system $\pmb{u}=(u_1,u_2)$ centered at $\bar{z}$ and defined over $U_{\bar{z}}$, such that:
\begin{itemize}
\item[(a)] The exceptional divisor $E_{tan}$ restricted to $U_{\bar{z}}$ coincides with $\{u_1\cdot u_2=0\}$.
\item[(b)] At each quadrant (bounded by $E_{tan}$) there exists two smooth analytic semi-segments $\Lambda_{\bar{z}}^1$ and $\Lambda_{\bar{z}}^2$ which are transverse to $\widetilde{\mathscr{L}}$ and to $E_{tan}$, such that the flow (of a local generator $\widetilde{\mathcal{Z}}$) associated to $\widetilde{\mathscr{L}}$ gives rise to a bijective (but not necessarily analytic) 
transition map
\[
\phi_{\bar{z}} : \Lambda_{\bar{z}}^1 \to \Lambda_{\bar{z}}^2
\]
and there exists a rectangle $V_{\bar{z}}$ bounded by $E_{tan}$, $\Lambda_{\bar{z}}^1$, $\Lambda_{\bar{z}}^2$ and a regular leaf $\mathcal{L} \not\subset E$ of $\widetilde{\mathscr{L}}$ such that $\bar{z} \in \partial V_{\bar{z}} \setminus (\Lambda_{\bar{z}}^1 \cup \Lambda_{\bar{z}}^2 \cup \mathcal{L})$ (see Figure 9).
\begin{figure}\label{fig8}
\begin{center}
\includegraphics[width=5cm]{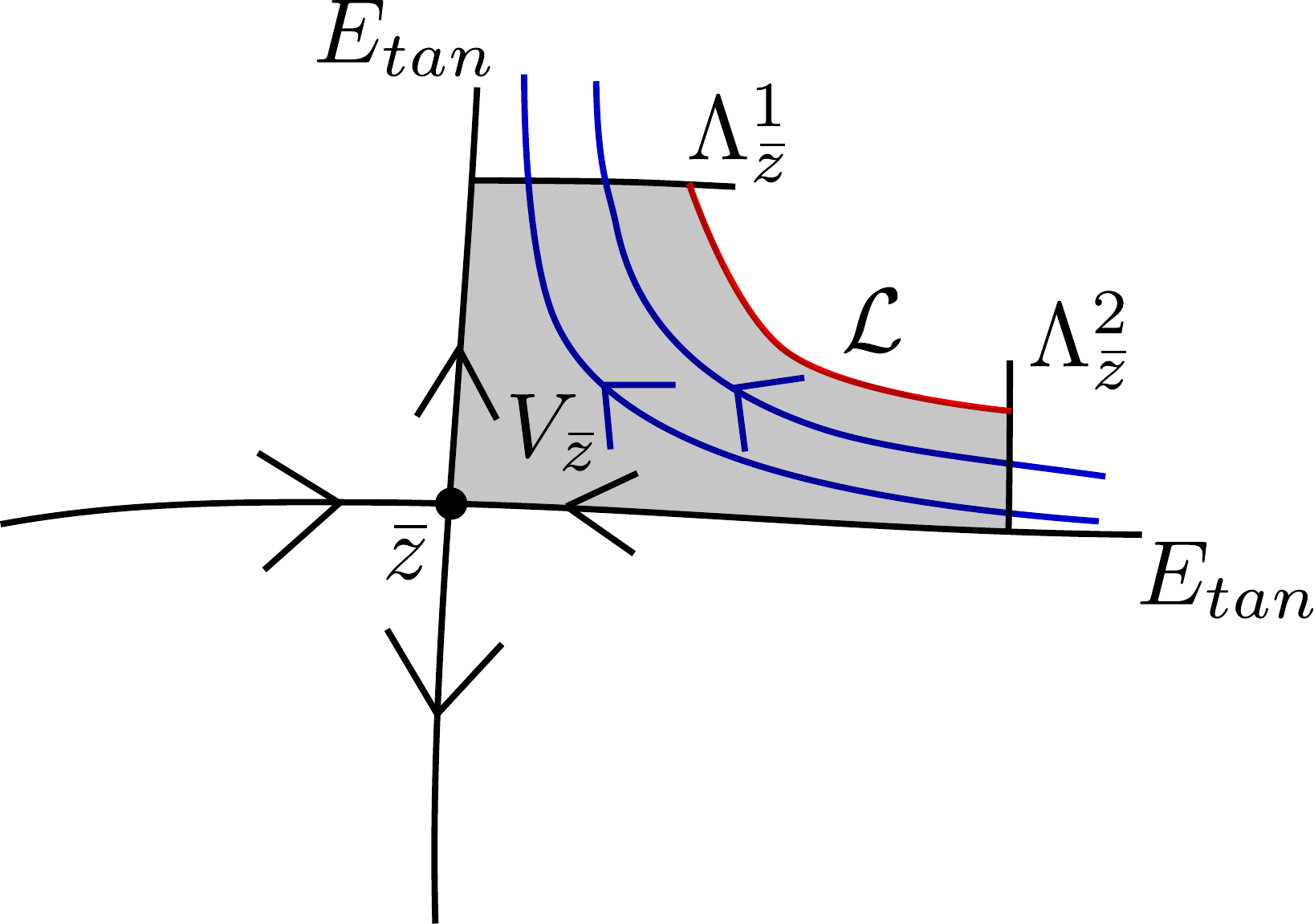}
\caption{Case (IV.ii) in Theorem \ref{thm:RSgeneral}}
\end{center}
\end{figure}
\item[(c)] There exists $\alpha,\,\beta \in \mathbb{N}^2$ such that the pulled-back metric $\pi^{\ast}(g) = g^{\ast}$ is locally bi-Lipschitz equivalent to:
\[
\begin{aligned}
h_{\bar{z}} &= (d \pmb{u}^{\alpha})^2 + (d \pmb{u}^{\beta})^2,\quad \text{ where } \quad \pmb{u}^{\alpha} = u_1^{\alpha_{1}}u_2^{\alpha_{2}} \text{ and } \, \pmb{u}^{\beta}=u_1^{\beta_{1}}u_2^{\beta_{2}}.
\end{aligned}
\]
Furthermore, there exists a vector field $\widetilde {\mathcal Z}$, which locally generates $\widetilde{\mathscr{L}}$, such that:\\
- either $|\widetilde{\mathcal{Z}}(\pmb{u}^{\alpha})| \geq  |\widetilde{\mathcal{Z}}(\pmb{u}^{\beta})|$ everywhere over $V_{\bar{z}}$, and $\widetilde{\mathcal{Z}}(\pmb{u}^{\alpha}) \neq 0$ everywhere over $V_{\bar{z}}\setminus E$;\\
- or $|\widetilde{\mathcal{Z}}(\pmb{u}^{\beta})| \geq  |\widetilde{\mathcal{Z}}(\pmb{u}^{\alpha})|$ everywhere over $V_{\bar{z}}$, and $\widetilde{\mathcal{Z}}(\pmb{u}^{\beta}) \neq 0$ everywhere over $V_{\bar{z}} \setminus E$.
\end{itemize}
\end{itemize}
\end{itemize}
\end{theorem}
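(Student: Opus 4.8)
The plan is to build the surface $\mathscr S$ and the morphism $\pi$ in three successive stages: first resolve the singularities of the analytic space $\Sigma$ (the geometric resolution of the Martinet surface), then resolve the pulled-back characteristic foliation (reduction of singularities of the planar line foliation), and finally perform an additional monomialization of the pulled-back metric $g$ so as to control the lengths. Throughout, the key structural input is the divergence bound of Proposition \ref{prop:DivProperty}, which is stable under blowings-up (since the pull-back of $\eta=\pi^*\delta$ still satisfies a relation of the form \eqref{eq:relation}), and which, combined with Lemma \ref{lem:final}, forces the \emph{saddle} dichotomy at every singular point of the strict transform $\widetilde{\mathscr L}$. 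Concretely, I would invoke the results recalled in Appendix \ref{app:ResolutionOfSingularities} (resolution of singularities of analytic surfaces) to get a first map $\pi_1:\mathscr S_1\to\Sigma$ that is an isomorphism over $\Sigma\setminus S$ and whose exceptional locus is a simple normal crossing divisor; this gives part (I) and the rough shape of $E$.

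Next I would apply the reduction of singularities of planar line foliations (Appendix \ref{app:ReductionLineFoliations}) to the strict transform $\pi_1^*\mathscr L$, obtaining a further sequence of point blowings-up after which all singularities of $\widetilde{\mathscr L}$ are \emph{elementary}. At this point the divergence property enters decisively: the local generator $\widetilde{\mathcal Z}$ of $\widetilde{\mathscr L}$ has the form $u^\alpha v^\beta\,\widetilde{\widetilde{\mathcal Z}}$ where the exponents record tangency to the exceptional divisor; Proposition \ref{prop:DivProperty}(i) guarantees hypothesis (i) of Lemma \ref{lem:final} holds, and hence every elementary singularity is in fact a \emph{saddle}, and $\widetilde{\widetilde{\mathcal Z}}$ is tangent to the divisor. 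This yields (II), and the decomposition $E=E_{tan}\cup E_{tr}$ of (III) is read off by distinguishing the components to which $\widetilde{\mathscr L}$ is tangent (those carried along by a tangent generator) from those to which it is transverse; the log-rank statement over $E_{tr}\setminus E_{tan}$ follows from the computation in the proof of Lemma \ref{lem:CharacteristicFoliation}(i), where $\delta=dx_1+A\,dx_2$ shows the foliation is regular and transverse there, so $\pi$ restricted to a transverse divisor is generically of log-rank $1$.

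The local normal forms in (IV) are then obtained by a careful case analysis at each point $\bar z\in E_{tan}$, according to whether one or two components of $E_{tan}$ pass through $\bar z$. In the one-component case (IV.i), after choosing coordinates $(u,v)$ with $E_{tan}=\{u=0\}$, either $\bar z$ is itself a saddle (Figure 6), or $\bar z$ is a regular point of $\widetilde{\mathscr L}$ on the divisor and the flow-box theorem together with a Poincar\'e-type first-return/transition construction produces the semi-segments $\Lambda^1_{\bar z},\Lambda^2_{\bar z}$ and the rectangle $V_{\bar z}$; the bi-analyticity of $\phi_{\bar z}$ comes from the analytic dependence of the flow, while case (IV.i.c) just records that an intervening transverse divisor factorizes $\phi_{\bar z}$ through it. The delicate case is (IV.ii), two components of $E_{tan}$ meeting at $\bar z$ (a corner), where one cannot expect analyticity of the transition map and one must instead invoke the regularity-of-transition-maps results of Speissegger \cite{speiss2} and the bi-Lipschitz classification of the pulled-back metric from \cite{HP} (the ``log-rank'' / monomialization machinery of Appendix \ref{app:HP}). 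Here I would perform one more stage of blowings-up to monomialize $\pi^*g$ into the form $h_{\bar z}=(d\pmb u^{\alpha})^2+(d\pmb u^{\beta})^2$, and then, using the divergence bound in the form Proposition \ref{prop:DivProperty}(ii) ($|\mathrm{div}_{\omS}\widetilde{\mathcal Z}|\le K|\pi_*\widetilde{\mathcal Z}|$) together with the saddle structure, argue that one of $\widetilde{\mathcal Z}(\pmb u^\alpha)$, $\widetilde{\mathcal Z}(\pmb u^\beta)$ dominates the other throughout $V_{\bar z}$ and is nonvanishing off the divisor --- this is the statement needed later to bound lengths via Lemma \ref{lem:lengthSaddle}. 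The main obstacle I anticipate is precisely this last point: ensuring that a \emph{single} resolution $\pi$ simultaneously monomializes the metric, keeps all foliation singularities saddles, and yields the clean domination of one monomial derivative over the other at every corner, which requires interleaving the metric-monomialization blowings-up with the foliation-reduction blowings-up without destroying the divergence property --- this is where the combination of resolution of singularities, \cite{speiss2}, and \cite{HP} is genuinely needed rather than elementary.
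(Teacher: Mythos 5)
Your overall three-stage architecture — first resolve the analytic space $\Sigma$, then reduce the pulled-back foliation, then control the metric — is broadly in line with the paper's proof, and you have correctly identified the decisive structural input: Proposition \ref{prop:DivProperty} plus Lemma \ref{lem:final} force every elementary singularity of $\widetilde{\mathscr L}$ to be a saddle, which yields (II) and the $E_{tan}/E_{tr}$ decomposition in (III). Parts (I), (II), (III) and (IV.i) are essentially in the right place, and you correctly invoke Lemma \ref{lem:HPmetric} / \cite{HP} for the bi-Lipschitz normal form $h_{\bar z}$ in (IV.ii.c).

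However there are two genuine gaps. First, the ordering: you put the metric monomialization last and explicitly flag "interleaving the metric-monomialization blowings-up with the foliation-reduction blowings-up" as the main obstacle, but you leave it unresolved. The paper resolves it by running the Hsiang--Pati resolution \emph{first} and then exploiting the stability clauses of Theorem \ref{prop:HP}(i)--(ii): the Hsiang--Pati property is preserved under the subsequent point blowings-up needed by the Bendixson--Seidenberg reduction and under principalizations of maximal ideals, so the later stages cannot spoil the metric normal form. With your proposed order (metric last) there is no such stability result to invoke, and monomializing the metric at the end could destroy the saddle structure established in stage two; so the obstacle you name would in fact bite.

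Second, and more concretely, the mechanism you propose for the domination clause of (IV.ii.c) — that one of $\widetilde{\mathcal Z}(\pmb u^\alpha)$, $\widetilde{\mathcal Z}(\pmb u^\beta)$ dominates the other on $V_{\bar z}$ — is not right. You suggest deriving it from the divergence bound of Proposition \ref{prop:DivProperty}(ii) together with \cite{speiss2}, but Speissegger's Hardy-field result plays no role in proving Theorem \ref{thm:RSgeneral} at all (it only enters later, in the proof of Lemma \ref{lem:lengthSaddle}), and the divergence bound is already spent in establishing the saddle dichotomy. The paper's actual argument for the domination is an elementary resolution-theoretic one that you are missing: introduce the analytic set $T=\{\pmb u : |\widetilde{\mathcal Z}(\pmb u^\alpha)|=|\widetilde{\mathcal Z}(\pmb u^\beta)|\}$; if $T$ is two-dimensional the two monomials agree and the claim is trivial, and otherwise $T$ is an analytic curve whose strict transform, after finitely many \emph{combinatorial} blowings-up (centers at intersections of exceptional components), can be separated from the $2$-points of $E_{tan}$, after which the sign of $|\widetilde{\mathcal Z}(\pmb u^\alpha)|-|\widetilde{\mathcal Z}(\pmb u^\beta)|$ is constant off $E$. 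Note in passing that combinatorial blowings-up at a saddle corner preserve both the saddle structure and the Hsiang--Pati form, which is what keeps this extra round of blowings-up compatible with (II) and the metric normal form — another manifestation of why the Hsiang--Pati resolution is done first.
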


\begin{proof}[Proof of Theorem \ref{thm:RSgeneral}]
Denote by $\mathscr{M}$ the reduced analytic space associated with $\Sigma$. By \cite[Theorem 1.3]{HP} (we recall the details in Theorem \ref{prop:HP} below), there exists a resolution of singularities $\pi : \mathscr{S} \to \Sigma \subset M$ via admissible blowings-up which satisfies the {Hsiang-Pati property} (see Appendix \ref{app:HP}). All blowings-up project into the singular set $Sing(\mathscr{M}) \subset S$; we can further suppose that the pre-image of $S$ is contained in the exceptional divisor (which is useful for combinatorial reasons), which guarantees (I). These properties are preserved by further real blowings-up described in Theorem \ref{prop:HP}(i)-(ii). 

Next, by \cite[Theorem 8.14]{IY} (we recall the details in Theorem \ref{thm:RSVectorField} below) we can further compose $\pi$ with a locally finite number of blowings-up of points in the exceptional divisor so that the strict transform of $\mathscr{L}$, which we denote by $\widetilde{\mathscr{L}}$, has only  elementary singularities and is either tangent or transverse to connected components of the exceptional divisor $E$. Denote by $E_{tan}$ the set of connected exceptional divisors tangent to $\mathscr{L}$, and by $E_{tr}$ the remaining ones.

Now, fix a point $\bar{z} \in \mathscr{S}$ and let $\mathcal{W}$ be a sufficiently small neighborhood of $\bar{z}$ so that $\mathcal{W}$ is orientable; in particular, fix a volume form $\omega_{\mathcal{W}}$ defined over $\mathcal{W}$. Next, up to shrinking $\mathcal{W}$, there exists a relatively compact open set $\mathcal{V} \subset M$, with $\pi(\mathcal{W}) \subset \mathcal{V}$, such that $\Delta$ is generated by a $1$-form $\delta \in \Omega^1(\mathcal{V})$ (cf. Remark \ref{rk:LocalDescriptMartinet}(i)).

Consider the vector field $\mathcal{Z}$ (which is defined over $\mathcal{W}$) given by:
\begin{equation}\label{eq:TransformFoliation}
i_{\mathcal{Z}} \omega_{\mathcal{W}} = \pi^{\ast} \delta. 
\end{equation}
By Proposition \ref{prop:DivProperty} we have that $\div_{\omega_{\mathcal{S}}}(\mathcal{Z}) \in \mathcal{Z}(\pi)$. Now, denote by $\widetilde{\mathcal{Z}}$ a local generator of $\widetilde{\mathscr{L}}$ defined over $\mathcal{W}$; we note that $\widetilde{\mathcal{Z}}$ is given by the division of $\mathcal{Z}$ by as many powers as possible of the exceptional divisor (c.f. Lemma \ref{lem:final}(ii)). It follows from Lemma \ref{lem:final} that all singularities of $\widetilde{\mathscr{L}} \cap \mathcal{W}$ are saddles, and that the foliation is tangent to connected components of the exceptional divisors where the log-rank of $\pi$ is zero (because, by equation \eqref{eq:TransformFoliation}, the vector field $\mathcal{Z}$ is divisible by powers of these exceptional divisors). In particular, the log-rank over $E_{tr}\setminus E_{tan}\cap \mathcal{W}$ must be equal to $1$. Since $\mathcal{V}$ was arbitrary, we conclude Properties (II) and (III).

Next, we provide an argument over $2$-points in order to prove $(IV)(ii)$. Let $\bar{z}$ be a point in the intersection of two connected components of $E_{tan}$. Since $\widetilde{\mathscr{L}}$ is tangent to $E_{tan}$, we deduce that $\bar{z}$ is a saddle point of $\widetilde{\mathscr{L}}$. Now, by Lemma \ref{lem:HPmetric}, the pulled back metric $g^{\ast}$ is locally (at $\bar{z}$) bi-Lipschitz equivalent to the metric 
\[
h_{\bar{z}} = (d \pmb{u}^{\alpha})^2 + (d \pmb{u}^{\beta})^2.
\] 
Recalling that $\widetilde{\mathcal{Z}}$ is a local generator of $\widetilde{\mathscr{L}}$, we 
consider the locally defined analytic set $T = \{\pmb{u}\,|\,|\widetilde{\mathcal{Z}}(\pmb{u}^{\alpha})| = |\widetilde{\mathcal{Z}}(\pmb{u}^{\beta})|\}$.

If $T$ is a 2-dimensional set then $|\widetilde{\mathcal{Z}}(\pmb{u}^{\alpha})| =  |\widetilde{\mathcal{Z}}(\pmb{u}^{\beta})|$ everywhere on a neighborhood of $\bar{z}$, and by the existence of transition maps close to saddle points (see, e.g. \cite[Section 2.4]{ADL}), we conclude easily Properties (IV.ii.a), (IV.ii.b), and (IV.ii.c).

Therefore, we can suppose that $T$ is an analytic curve. We claim that, up to performing combinatorial blowings-up (that is, blowings-up whose centers are the intersection of exceptional divisors), we can suppose that $T \subset E_{tan}$ (we recall that the argument is only for $2$-points). As a result, without loss of generality, we locally have: either $|\widetilde{\mathcal{Z}}(\pmb{u}^{\alpha})|>  | \widetilde{\mathcal{Z}}(\pmb{u}^{\beta})|$, or $|\widetilde{\mathcal{Z}}(\pmb{u}^{\beta})|>  |\widetilde{ \mathcal{Z}}(\pmb{u}^{\alpha})|$ everywhere outside the exceptional divisor $E_{tan}$. Hence, again by the existence of transition maps close to saddle points, we conclude the proof of Properties (IV.ii.a), (IV.ii.b), and (IV.ii.c).

In order to prove the claim, consider a sequence of combinatorial blowings-up so that the strict transform $T^{st}$ of $T$ does not intersect $2$-points. By direct computation over local charts, the pull-back of the metric $h_{\bar{z}}$ again satisfies equation \eqref{eq:HP12} over every $2$-point in the pre-image of $\bar{z}$ (with different $\alpha$ and $\beta$). Now, denote by $\bar{T}$ the analogue of the set $T$, but computed after the sequence of combinatorial blowings-up; since $\widetilde{\mathscr{L}}$ is a line foliation (therefore, generated by one vector field), we conclude that $T^{st}$ and $\bar{T}$ coincide everywhere outside of the exceptional divisor, which proves the claim.


Finally, let $\bar{z}$ be a point contained in only one connected component of $E_{tan}$ and assume that $\bar{z}$ is not a singularity of $\widetilde{\mathscr{L}}$. Then, up to taking a sufficiently small neighborhood of $\bar{z}$, the flow-box Theorem (see e.g. \cite[Theorem 1.12]{ADL} or \cite[Theorem 1.14]{IY}) implies properties (IV.i.a), (IV.i.b), and (IV.i.c). This concludes the proof.
\end{proof}

\begin{remark}\label{rk:tangencies}
As follows from Lemma \ref{lem:final},  $\widetilde{\mathscr{L}}$ 
is tangent to a component $F$ of the exceptional divisor if and only if so is the pull-back foliation $\pi^* \mathscr L$.  Indeed, in the language of the local vector fields generating these foliations, Lemma \ref{lem:final} yields ``$\mathcal Z$ is tangent to $F$ if and only if so is $\widetilde {\mathcal Z}$''. 
\end{remark}

\begin{lemma}[Compatibility of stratifications]\label{lem:CompatibilityStratificaiton}
Recall the notation of Lemma \ref{lem:StratFol} and of Theorem \ref{thm:RSgeneral}. Up to adding a locally finite number of points to $\Sigma^0$, we have:
\[
\pi (E_{tr}\setminus E_{tan}) \subset \Sigma_{tr}^1 , \text{ and } \pi
(E_{tan}) \subset \Sigma^0 \cup \Sigma^1_{tan}.
\]
\end{lemma}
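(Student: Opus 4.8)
The plan is to treat each irreducible component of $E$ separately and to translate, via Remark~\ref{rk:tangencies}, the tangency (resp.\ transversality) of $\widetilde{\mathscr{L}}$ to a component $F$ into a condition on the $1$-form $\delta$ along the curve $\pi(F)$. First I would record two elementary facts. Since all the centers of the blowings-up composing $\pi$ lie in $S$ (they lie in $Sing(\mathscr{M})\subset S$, or in the exceptional divisor constructed so far, which maps into $S$ by induction), we have $\pi(E)\subset S$; combined with the surjectivity of $\pi$ and with $\pi(\mathscr{S}\setminus E)=\Sigma\setminus S$ from Theorem~\ref{thm:RSgeneral}(I), this gives $\pi(E)=S$. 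Moreover, since $\pi$ is proper and analytic and the components of $E$ form a locally finite family of analytic curves, for each component $F$ the image $\pi(F)$ is a closed semianalytic subset of $S$ of dimension $\le 1$: a single point if $\pi|_F$ is constant, a $1$-dimensional semianalytic set otherwise.

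The conceptual heart is a local computation. Fix a point $\bar z$ of a component $F$ of $E$ which is not a crossing point of two components (crossing points form a locally finite subset of $\mathscr{S}$, to be absorbed into $\Sigma^0$ below); choose local coordinates $(u,v)$ centered at $\bar z$ with $F=\{u=0\}$, a local volume form $\omega_{\mathcal{W}}$, and a local analytic $1$-form $\delta$ generating $\Delta$ as in Remark~\ref{rk:LocalDescriptMartinet}(i). Let $\mathcal{Z}$ be the vector field with $i_{\mathcal{Z}}\omega_{\mathcal{W}}=\pi^{\ast}\delta$, so $\mathcal{Z}$ generates the pulled-back foliation $\pi^{\ast}\mathscr{L}$ (cf.\ \eqref{eq:TransformFoliation}). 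By Remark~\ref{rk:localDivergence}(i) one has $\pi^{\ast}\delta = w\,\bigl(\mathcal{Z}(u)\,dv-\mathcal{Z}(v)\,du\bigr)$ for a nowhere vanishing analytic $w$, hence, restricting to $F=\{u=0\}$,
\[
(\pi|_{F})^{\ast}\delta \;=\; w(0,v)\,\mathcal{Z}(u)(0,v)\,dv .
\]
Thus $(\pi|_{F})^{\ast}\delta\equiv 0$ iff $\mathcal{Z}$ is tangent to $F$, which by Remark~\ref{rk:tangencies} (a consequence of the divergence property, Lemma~\ref{lem:final}, applicable here by Proposition~\ref{prop:DivProperty} and \eqref{eq:relation}) holds iff $\widetilde{\mathscr{L}}$ is tangent to $F$, i.e.\ iff $F\subset E_{tan}$. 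Consequently: if $F\subset E_{tan}$ then $(\pi|_{F})^{\ast}\delta\equiv 0$, so either $\pi|_F$ is constant or $\pi(F)$ is a curve whose tangent line lies everywhere in $\ker\delta=\Delta$; if $F$ is a component of $E_{tr}$ then $\widetilde{\mathscr{L}}$ is transverse to $F$ at every point (Theorem~\ref{thm:RSgeneral}(III)), so $\mathcal{Z}$ is nowhere tangent to $F$, $(\pi|_{F})^{\ast}\delta$ is nowhere zero, $\pi|_F$ is an immersion, and $\pi(F)$ is an immersed analytic curve whose tangent line is nowhere contained in $\Delta$. In particular a component of $E$ contracted by $\pi$ to a point cannot be transverse to $\widetilde{\mathscr{L}}$, hence it belongs to $E_{tan}$.

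Finally I would do the bookkeeping on the stratification. Using $S=\bigcup_F\pi(F)$ and local finiteness, enlarge $\Sigma^0$ by the locally finite set $Z$ consisting of: the images of the crossing points of components of $E$; the images of the components $F$ with $\pi|_F$ constant; the singular points of the $1$-dimensional semianalytic set $S$; and, if necessary, finitely many more points (locally) so that the resulting decomposition is still a regular semianalytic stratification of $\Sigma$ with the local-triviality property of Lemma~\ref{lem:StratFol}. With $\Sigma^2=\Sigma\setminus S$ unchanged, each $1$-dimensional stratum of $S\setminus\Sigma^0$ is an analytic curve, everywhere tangent or everywhere transverse to $\Delta$, and one puts the former ones in $\Sigma^1_{tan}$ and the latter in $\Sigma^1_{tr}$. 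By the previous paragraph, outside $Z$ every $1$-dimensional piece of $\pi(E_{tan})$ is a $\Delta$-tangent analytic curve inside $S$, hence coincides locally with the unique $1$-dimensional branch of $S$ through each of its generic points, so it lies in $\Sigma^1_{tan}$; thus $\pi(E_{tan})\subset\Sigma^0\cup\Sigma^1_{tan}=\widetilde\Sigma$. Symmetrically, outside $Z$ every $1$-dimensional piece of $\pi(E_{tr})$ is a $\Delta$-transverse analytic curve inside $S$, hence lies in $\Sigma^1_{tr}$; and the remaining points $\pi(E_{tr})\cap Z$ all belong to $\pi(E_{tan})$, because each of them is the image of a singular point of $S$ (or of a crossing of components of $E$), over which $\pi$ has created exceptional divisor(s) contracted to a point — hence lying in $E_{tan}$ by the previous paragraph — so that $\pi^{-1}$ of that point meets $E_{tr}$ only inside $E_{tan}$. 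This gives $\pi(E_{tr}\setminus E_{tan})\subset\Sigma^1_{tr}$.

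I expect the main obstacle to be precisely this last point: showing that the finitely many ``bad'' points of $\pi(E_{tr})$ (singularities of $S$, degeneracies of transversality, points of the old $\Sigma^0$) are already attained inside $E_{tan}$, so that moving them to $\Sigma^0$ does not break the clean inclusion $\pi(E_{tr}\setminus E_{tan})\subset\Sigma^1_{tr}$. This uses the combinatorial structure of the resolution $\pi$ from Theorem~\ref{thm:RSgeneral} — in essence, that every exceptional divisor produced over the singular locus of $\Sigma$ (in particular every component contracted to a point) is $\widetilde{\mathscr{L}}$-invariant and thus belongs to $E_{tan}$. The local normal-form computation together with the equivalence $F\subset E_{tan}\iff(\pi|_F)^{\ast}\delta\equiv 0$ is the substantive ingredient; the rest is routine semianalytic geometry.
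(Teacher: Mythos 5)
Your proposal lands on the same underlying mechanism as the paper's proof: Remark \ref{rk:tangencies} (where the divergence bound from Lemma \ref{lem:final} and Proposition \ref{prop:DivProperty} enters) identifies tangency of $\widetilde{\mathscr{L}}$ to a component $F$ of $E$ with tangency of $\pi^{*}\mathscr{L}$ to $F$, which is then translated into tangency of $\Delta$ along $\pi(F)$. The technical route differs. You compute $(\pi|_F)^{*}\delta = w\,\mathcal{Z}(u)|_{u=0}\,dv$ directly and read off both the immersion property of $\pi|_F$ (for $F\subset E_{tr}$) and the tangency dichotomy for $\pi(F)$; the paper instead invokes the log-rank from Theorem \ref{thm:RSgeneral}(III) together with the Hsiang-Pati expression \eqref{eq:HP1}, from which the isomorphism $d\pi|_{TF}\colon TF\to T\pi(F)$ follows for log-rank-$1$ points. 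Your computation is more self-contained and makes transparent why the log-rank is $1$ over $E_{tr}\setminus E_{tan}$ and why a contracted component must lie in $E_{tan}$. The paper's log-rank route buys cleaner control of the degeneracies: absorbing into $\Sigma^0$ the locally finite (by Remark \ref{rk:LogRank}) set of points whose fiber meets the log-rank-$0$ locus, the map $\pi$ has constant log-rank $1$ over the remaining fibers, so the same remark applies uniformly to components of $E_{tan}$ as well as $E_{tr}$, while in your version the case $F\subset E_{tan}$ requires a separate generic-point argument since $(\pi|_F)^{*}\delta\equiv 0$ gives no immersion information.

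The step where your write-up is looser than the paper is precisely the one you flag yourself: the assertion that every point of $\pi(E_{tr})$ absorbed into the enlarged $\Sigma^0$ already lies in $\pi(E_{tan})$, ``because $\pi^{-1}$ of that point meets $E_{tr}$ only inside $E_{tan}$.'' This is plausible but not established by the argument given; a component of $E_{tr}$ could a priori pass through the fiber over a singular point of $S$ at a point off $E_{tan}$. The paper closes this gap by defining the enlargement of $\Sigma^0$ directly in terms of log-rank: it adds the images of the log-rank-$0$ locus, so that over any $\bar{w}\in\Sigma^1_{tr}$ the log-rank along the whole fiber is $1$; then the contradiction ``$\bar z\in\pi^{-1}(\bar{w})\cap E_{tan}$ forces, via remark (2), a $\Delta$-tangent direction along $\Sigma^1_{tr}$'' gives $\pi(E_{tan})\cap\Sigma^1_{tr}=\emptyset$ cleanly, and the transversality of $\widetilde{\mathscr{L}}$ to $E_{tr}$ together with constant log-rank $1$ gives $\pi(E_{tr}\setminus E_{tan})\subset\Sigma^1_{tr}$. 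Your route can be completed, but the contracted-divisor heuristic should be replaced (or supplemented) by this log-rank characterization of the added $\Sigma^0$-points.
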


\begin{proof}[Proof of Lemma \ref{lem:CompatibilityStratificaiton}]
We start by making two remarks:

$(1)$ up to adding a locally finite number of points to $\Sigma^0$,  without loss of generality we can assume that $\Sigma^0$ contains all points $\bar{w}\in \Sigma$ where $\pi$ has $\mbox{log rk}$ equal to $0$ over the fiber of $\bar{w}$. 

$(2)$ Let $F$ be an irreducible exceptional divisor of $E$ where the log-rank is constant equal to $1$. Then $\pi(F)$ is an analytic curve over $\Sigma$; furthermore, by expression \eqref{eq:HP1}, it follows that $d\pi|_{T F}: TF \to T\pi(F)$ is an isomorphism. In particular, $\Delta$ is tangent to $\pi(F)$ at $\bar{w} = \pi(\bar{z})$ if and only if  $\pi^* \mathscr L$ is tangent to $F$ at $\bar{z}$.
By Remark \ref{rk:tangencies}, this latter property is equivalent to the tangency of  $\widetilde{\mathscr{L}}$  to $F$ at 
$\bar{z}$.

Now, by Theorem \ref{thm:RSgeneral}(I), we know that $\pi( E_{tan} \cup E_{tr}) \subset  \Sigma^0 \cup \Sigma^1_{tan}\cup \Sigma^1_{tr}$. Therefore, by the second remark, it is clear that $\pi (E_{tr}) \subset \Sigma^1_{tr}$. Next, let $\bar{w} \in \Sigma_{tr}^1$ and note that the $\mbox{log rk}$ can be assumed to be constant equal to $1$ along the fiber $\pi^{-1}(\bar{w})$, thanks to the first remark. Moreover, if we assume by contradiction that there exists $\bar{z} \in \pi^{-1}(\bar{w})$ which belongs to $E_{tan}$, we get a contradiction with the second remark. We conclude easily.
\end{proof}

\begin{remark}
Unlike for the complex analytic spaces, a resolution map of a real analytic space 
is not necessarily surjective and its image equals the closure of the regular part.  For instance for the Whitney umbrella 
$\{(x,y,z)\in \R^3; y^2= z x^2 \}$, the singular part is the vertical line $\{x=y=0\}$, 
and the image of any resolution map equals $\{(x,y,z)\in \R^3; y^2= z x^2, z\ge 0 \}$ and 
does not contain "the handle" $\{x=y=0, z<0\}$.  
\end{remark}

\subsection{Proof of Proposition \ref{prop:TopDichSing}}\label{sec:ThmTopDichSing}

We follow the notation of Theorem \ref{thm:RSgeneral}. Without loss of generality, we may suppose that the pre-image of $\Sigma^0$ contain all points over which $\pi$ has log-rank equal to $0$. Next, we note that the singular set of $\widetilde{\mathscr{L}}$ is a locally finite set of discrete points contained in $E_{tan}$. By Lemma \ref{lem:CompatibilityStratificaiton} and the fact that $\pi$ is proper, we conclude that there exists a locally finite set of points $\widetilde{\Sigma}^0 \subset \Sigma^0 \cup \Sigma^1_{tan} = \widetilde{\Sigma}$ whose pre-image contain all singular points of $\widetilde{\mathscr{L}}$ and all points where log-rank of $\pi$ is zero. Apart from adding a locally finite number of points to $\widetilde{\Sigma}^0$, we can suppose that $\Sigma^0 \subset \widetilde{\Sigma}^0$.

Now, let $\gamma: [0,1) \to \Sigma$ be a convergent transverse-singular trajectory such that  $\bar{y}:=\lim_{t\rightarrow 1} \gamma(t) \in \widetilde{\Sigma}$. Denote by $\widetilde{\gamma}$ the strict transform of $\gamma([0,1))$ under $\pi$, that is
\[
\widetilde{\gamma} := \overline{\pi^{-1}(\gamma([0,1))\setminus \Sigma^{1}_{tr})}.
\]
By hypothesis, we know that the topological limit of $\widetilde{\gamma}$, which is defined by
\[
\omega(\widetilde{\gamma}) := \bigcap_{t\in [0,1)} \overline{\pi^{-1}(\gamma([t,1))\setminus \Sigma^{1}_{tr})},
\]
is contained in the pre-image of $\bar{y}$, say $F =\pi^{-1}(\bar{y}) \subset E_{tan}$. 

Now, suppose by contradiction that $\bar{y} \notin \widetilde{\Sigma}^0$. In this case, $\widetilde{\mathscr{L}}$ is an everywhere regular foliation over $F$, and $\pi$ has log-rank equals to $1$ over $F$. By equation \eqref{eq:HP1} and Theorem \ref{thm:RSgeneral}(IV.i.b), we conclude that the topological limit of $\widetilde{\gamma}$ must contain an open neighborhood of $F$ in $E_{tan}$, which projects into a $1$-dimensional analytic set over $\Sigma$. This is a contradiction with the definition of convergent transverse-singular trajectory, which implies that $\bar{y} \in \widetilde{\Sigma}^0$. 


We now need the following:
\begin{proposition} \label{prop:characteristic}
A convergent transverse-singular trajectory $\gamma:[0,1) \rightarrow \Sigma$ is characteristic, if and only if, the topological limit $\omega (\tilde{\gamma})$ is a singular point $\bar{z}$ of $\widetilde{\mathscr{L}}$ and in this case 
$\widetilde\gamma$ is a characteristic orbit of an analytic vector field that generates  
$\widetilde{\mathscr{L}}$ in a neighborhood of $\bar{z}$.  
\end{proposition}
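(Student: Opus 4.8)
The plan is to transfer the topological dichotomy for planar analytic vector fields (Proposition \ref{prop:TopDichPlanar}) from the resolution space $\mathscr{S}$, where the foliation $\widetilde{\mathscr{L}}$ is (locally) generated by an analytic vector field with at worst saddle-type singularities by Theorem \ref{thm:RSgeneral}(II), down to $\Sigma$ via the resolution map $\pi$. The first half of the argument has essentially been established already in the preceding paragraphs: we have shown that $\bar{y}=\lim_{t\to 1}\gamma(t)\in\widetilde{\Sigma}^0$, that the topological limit $\omega(\widetilde{\gamma})$ is contained in the fiber $F=\pi^{-1}(\bar{y})\subset E_{tan}$, and (in the case $\bar{y}\notin\widetilde{\Sigma}^0$ one reaches a contradiction) that $\widetilde{\mathscr{L}}$ has only finitely many singular points on $F$, all saddles. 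So I would begin by arguing that $\omega(\widetilde{\gamma})$, being a connected, compact, flow-invariant subset of the union of $F$ and the leaves of $\widetilde{\mathscr{L}}$, and contained in the closure of a single orbit, must either be a single singular point of $\widetilde{\mathscr{L}}$ or consist of regular points of $\widetilde{\mathscr{L}}$ lying on $F$ together with possibly some separatrices; the saddle structure (no centers, no limit cycles in the relevant local picture, by Theorem \ref{thm:RSgeneral}(II)--(IV)) rules out the latter possibility unless $\omega(\widetilde{\gamma})$ is a single point.

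For the forward implication ($\gamma$ characteristic $\Rightarrow$ $\omega(\widetilde{\gamma})$ is a saddle and $\widetilde{\gamma}$ a characteristic orbit), I would argue contrapositively: if $\omega(\widetilde{\gamma})$ is \emph{not} a single singular point of $\widetilde{\mathscr{L}}$, then by the previous paragraph it must contain a regular point $\bar{z}$ of $\widetilde{\mathscr{L}}$ on $E_{tan}$. Apply Theorem \ref{thm:RSgeneral}(IV.i.b): either $\bar z$ lies in a flow-box for $\widetilde{\mathscr{L}}$ transverse to $E_{tan}$, forcing $\widetilde{\gamma}$ to exit a neighborhood of $E_{tan}$ in finite ``time'' (contradicting $\bar z\in\omega(\widetilde{\gamma})$), or one is in the rectangle/transition-map situation where the return picture gives monodromy. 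Using the local section $\Lambda_{\bar z}^1$ (or a branch of $E_{tr}$, if one meets $\Sigma^1_{tr}$ infinitely often, via case (IV.i.c)) and pushing it down by $\pi$ to a section $\Lambda\subset\Sigma$ of $\mathscr{L}$ at $\bar y$ as in Definition \ref{def:SingCharcMonod}(i), the monodromic return of $\widetilde{\gamma}$ translates into $\gamma([0,1))\cap\Lambda$ being infinite — i.e., $\gamma$ is monodromic, contradicting the hypothesis. Hence $\omega(\widetilde{\gamma})=\{\bar z\}$ with $\bar z$ a saddle; and then $\widetilde{\gamma}$ is a regular orbit of the analytic generator $\widetilde{\mathcal{Z}}$ converging to $\bar z$, so Proposition \ref{prop:TopDichPlanar} applies and $\widetilde{\gamma}$ is either characteristic or monodromic as a planar orbit. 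A monodromic planar orbit at $\bar z$ again yields, via a local section at $\bar z$ and its image under $\pi$, that $\gamma$ is monodromic — excluded. So $\widetilde{\gamma}$ is a characteristic orbit.

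For the converse ($\omega(\widetilde{\gamma})=\{\bar z\}$ a saddle and $\widetilde{\gamma}$ characteristic $\Rightarrow$ $\gamma$ characteristic), I would show directly that $\gamma$ cannot be monodromic. Suppose it were, with section $\Lambda\subset\Sigma$ at $\bar y$; pulling $\Lambda$ back by $\pi$ (and using that $\pi$ restricted to $\mathscr{S}\setminus E$ is a diffeomorphism onto $\Sigma\setminus S$, Theorem \ref{thm:RSgeneral}(I)), $\widetilde{\gamma}$ would meet the lift $\widetilde{\Lambda}$ — a local transversal to $\widetilde{\mathscr{L}}$ at $\bar z$, possibly after noting $\widetilde\Lambda$ accumulates on $\bar z$ along $E_{tan}$, handled via the transition-map description of Theorem \ref{thm:RSgeneral}(IV) — in infinitely many points. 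But a characteristic planar orbit has a well-defined limiting tangent direction at $\bar z$ (its secant curve converges), so it can cross any fixed transversal at $\bar z$ only finitely often; contradiction. Therefore $\gamma$ is characteristic.

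\textbf{Main obstacle.} The delicate point is the precise matching, at the level of the map $\pi$ on the fiber $F\subset E_{tan}$, between ``monodromic'' in the planar/resolution-space sense and ``monodromic'' in the sense of Definition \ref{def:SingCharcMonod} downstairs on $\Sigma$: the fiber $F$ may be a chain of several divisor components, the trajectory may pass through $2$-points of $E_{tan}$ (case (IV.ii), where the transition map need not even be analytic) and may hit $\Sigma^1_{tr}$ infinitely often (case (IV.i.c)), so one must carefully combine the various local transition maps of Theorem \ref{thm:RSgeneral}(IV) and check that a genuinely infinite sequence of returns to a single section persists after projection (and is not, say, an artifact of the blow-down collapsing $F$ to the point $\bar y$). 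Handling the multi-component/$2$-point case — where one chooses the section to be a branch of $E_{tr}$, or a leaf-adapted transversal, and tracks the combinatorics of the divisor chain — is where the real work lies; the single-component regular-or-saddle case is comparatively routine given Proposition \ref{prop:TopDichPlanar} and the flow-box theorem.
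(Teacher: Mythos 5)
Your proposal is correct and follows essentially the same route as the paper's (much terser) proof: reduce to the topological limit $\omega(\widetilde\gamma)$, show that more than one limit point forces monodromy, and conclude in the single-point case from the saddle structure supplied by Theorem \ref{thm:RSgeneral}(II). One minor redundancy: your appeal to Proposition \ref{prop:TopDichPlanar} to rule out a monodromic planar orbit at $\bar z$ is unnecessary, since $\bar z$ is a saddle and saddles admit no monodromic orbits at all — any orbit converging to a saddle is one of its finitely many separatrices, which is exactly how the paper closes the argument in one line.
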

\begin{proof}[Proof of Proposition \ref{prop:characteristic}]
Let $\gamma$ be a  convergent transverse-singular trajectory.  If $\omega(\widetilde{\gamma})$ contains more than one point then $\gamma$ is monodromic.  Therefore we may assume that $\omega(\widetilde{\gamma})= \{\bar{z}\}$ and then, clearly, $\bar{z}$ must be a singular point of $\widetilde{\mathscr{L}}$.  Since 
all singular points are saddles there are only finitely many orbits of the associated vector field that converge to the singular point and all of them are the characteristic orbits.  
\end{proof}

As a consequence of the last proposition, we can now prove the following result, which concludes the proof of Proposition \ref{prop:TopDichSing}.

\begin{proposition} \label{prop:characteristic-semianalytic}
Let   $\gamma:[0,1) \rightarrow \Sigma$ be a (convergent transverse-singular) characteristic trajectory, then $\gamma([0,1))$ is a semianalytic curve. \end{proposition}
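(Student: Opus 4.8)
The plan is to deduce Proposition \ref{prop:characteristic-semianalytic} from Proposition \ref{prop:characteristic} together with the local normal form for saddle points. By Proposition \ref{prop:characteristic}, if $\gamma:[0,1)\to\Sigma$ is a characteristic trajectory then its strict transform $\widetilde\gamma$ converges to a single point $\bar z$ which is a saddle singularity of $\widetilde{\mathscr L}$, and $\widetilde\gamma$ is a characteristic orbit of an analytic vector field $\widetilde{\mathcal Z}$ generating $\widetilde{\mathscr L}$ near $\bar z$. The first step will be to recall that, near a saddle point, an analytic vector field admits (at least) two analytic separatrices, and — crucially — the only orbits converging to the saddle are precisely contained in (or equal to) these separatrices; in particular each characteristic orbit converging to $\bar z$ is a semianalytic subset of $\mathscr S$ (it is an analytic arc, namely a branch of a separatrix, minus the singular point, possibly restricted to a subinterval). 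This is where I would invoke the reduction of singularities for planar vector fields recalled in Appendix \ref{app:ReductionLineFoliations}, or directly the structure of elementary saddles (cf. Remark \ref{rk:ElembentarySing} and \cite[Section 2.4]{ADL} or \cite{IY}).

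Second, I would push this semianalyticity down to $\Sigma$ via $\pi$. The image of a semianalytic set under a proper analytic map is subanalytic, not a priori semianalytic; to get semianalyticity one uses that $\pi$ is a finite composition of blowings-up (so in particular it is a finite proper analytic map onto its image, restricted to a neighborhood of the compact arc $\widetilde\gamma\cup\{\bar z\}$), and that the image of a semianalytic curve (a $1$-dimensional set) under such a map is again a semianalytic curve. Concretely: $\pi(\widetilde\gamma)$ is the image of an analytic arc, hence is a connected $1$-dimensional subanalytic set; since it is a curve, one can apply the curve selection / Puiseux description (as in Remark \ref{rk:LocalPuiseux} and Appendix \ref{App:semianalytic}) to conclude it is semianalytic. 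One must also take care of the portion of $\gamma$ that meets $\Sigma^1_{tr}$, which was removed when forming the strict transform: by Proposition \ref{prop:TopDichSing}(i) a characteristic trajectory satisfies $\gamma([\bar t,1))\subset\Sigma^2$ for some $\bar t$, so near the limit point there is no intersection with $\Sigma^1_{tr}$, and on the compact part $\gamma([0,\bar t])$ one uses Lemma \ref{rk:LocalTriviality}(ii) (plus the local analyticity on $\Sigma^2$ from Lemma \ref{lem:CharacteristicFoliation}(i)) to see that each crossing contributes a semianalytic arc and there are only finitely many such crossings; a finite union of semianalytic arcs meeting along finitely many points is a semianalytic curve.

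Third, I would assemble these pieces: $\gamma([0,1))$ decomposes (up to the limit point $\bar y$, which is added to form the closure) as a finite union of analytic arcs — those lying in $\Sigma^2$ where $\mathscr L$ is regular, those crossing $\Sigma^1_{tr}$ handled by Lemma \ref{rk:LocalTriviality}(ii), and the final arc near $\bar y$ which is the $\pi$-image of (a branch of) a separatrix of the saddle $\widetilde{\mathcal Z}$. Since the trajectory is convergent and characteristic, the ``secant condition'' excludes spiralling, so these arcs chain together in a locally finite way and the total is a compact connected semianalytic set of dimension $1$, i.e. a semianalytic curve in the sense of the paper.

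The main obstacle will be the passage from subanalytic to semianalytic under $\pi$: a priori the image of a semianalytic set under a proper analytic morphism is only subanalytic, so one genuinely needs the one-dimensionality of the set (a subanalytic curve is semianalytic) together with the fact that $\pi$ is a locally finite composition of blowings-up (so that, restricted to a neighborhood of the relatively compact arc, it is a finite branched cover onto its image). A secondary technical point is bookkeeping at the limit point $\bar y$: one must check that only finitely many separatrix-branches of $\widetilde{\mathcal Z}$ at $\bar z$ actually arise as limits of the strict transform (which follows from the saddle being an elementary singularity with finitely many orbits converging to it), and that $\pi$ maps these branches to genuine arcs of $\Sigma$ rather than collapsing them — here one uses that the log-rank statement in Theorem \ref{thm:RSgeneral}(III)–(IV) controls how $\pi$ behaves along $E_{tan}$.
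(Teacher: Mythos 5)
Your proof follows essentially the same route as the paper: the strict transform $\widetilde\gamma$ is (a sub-arc of) an analytic separatrix of a saddle singularity of $\widetilde{\mathscr L}$, hence semianalytic (the paper cites the Briot--Bouquet stable manifold theorem at this step), and semianalyticity is then pushed down through $\pi$ using the fact, recorded in Remark \ref{rk:semiref}, that the image of a semianalytic curve under an analytic map is again semianalytic --- which is precisely the ``one-dimensional sets behave better than higher-dimensional ones'' phenomenon you correctly flag as the main obstacle. Your extra bookkeeping (the $\Sigma^1_{tr}$-crossings, the worry about $\pi$ collapsing a separatrix) is subsumed by that single citation, and the only point worth tightening is the appeal to Proposition~\ref{prop:TopDichSing}(i): since the present proposition is exactly what completes the proof of Proposition~\ref{prop:TopDichSing}(i), it is cleaner to derive the fact that $\gamma([\bar t,1))\subset\Sigma^2$ directly from Proposition~\ref{prop:characteristic} (the transverse separatrix at $\bar z$ lies off $E$ near $\bar z$, so $\pi$ of its germ lies in $\Sigma^2$) rather than quoting the not-yet-established statement.
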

\begin{proof}[Proof of Proposition \ref{prop:characteristic-semianalytic}]
The strict transform $\widetilde \gamma$ of $\gamma$ is a characteristic orbit of a saddle singularity, and therefore, it 
is semianalytic by the stable manifold Theorem of Briot and Bouquet \cite{bb1856} (see, e.g. \cite[Theorem 2.7]{ADL}).  To conclude, we note that the image of a semianalytic curve by a proper analytic map is semianalytic, see Remark \ref{rk:semiref}.
\end{proof}



\subsection{Proof of Proposition \ref{prop:ComparisonMonodromic}}\label{sec:ProofPropComparisonMonodromic}

We follow the notation of Theorem \ref{thm:RSgeneral}, Proposition \ref{prop:ComparisonMonodromic}, and Remark \ref{rk:ProofPropComparisonMonodromic}. Without loss of generality, we may assume that there exists an open neighborhood $\mathcal{W}$ of $\bar{y}$ such that $\gamma([0,1)) \subset \mathcal{W}$, $\lambda([0,1)) \subset \mathcal{W}$, and either $\gamma ([0,1))\cap \Sigma_{tr}^1 = \lambda ([0,1))\cap \Sigma_{tr}^1 = \emptyset$ with sequences of times  $\{t_k^{\gamma}\}_{k\in \N}, \{t_k^{\lambda}\}_{k\in \N}$, or $\gamma ([0,1))\cap \Sigma_{tr}^1 \neq \emptyset$, $\lambda ([0,1))\cap \Sigma_{tr}^1 \neq \emptyset$ are infinite with a common sequence of jumps $\{(i_k,j_k)\}_{k\in \N}$ associated respectively with $\{t_k^{\gamma}\}_{k\in \N}$ and $\{t_k^{\lambda}\}_{k\in \N}$.

The Riemmanian metric $g$ is bi-lipschitz equivalent to an analytic metric over $\mathcal{W}$. Since Proposition \ref{prop:ComparisonMonodromic} is invariant by local bi-lipschitz equivalence of metrics, we suppose without loss of generality that $g$ is analytic. We denote by $\mathcal{V}$ the pre-image of $\mathcal{W}$ by $\pi$, and by $g^{\ast}$ the pull-back of $g$ by $\pi$ (which is analytic and degenerated over $E$).

We recall that $\gamma(0)$ and $\lambda(0)$ are assumed to belong to the same section $\Lambda$ and that $d^{\Lambda}(\lambda(0))<d^{\Lambda}(\gamma(0))$. We denote by $\widetilde{\gamma}$ and $\widetilde{\lambda}$ the strict transform of $\gamma$ and $\lambda$ (defined  as in the proof of Proposition \ref{prop:TopDichSing}).

Since the transition maps $T^{\Lambda}$ and $T^{j_k}_{i_k}$ satisfy  property  \eqref{eq:PoincareTransitionProp} and \eqref{eq:TransitionPropI} respectively, we note that
\begin{eqnarray}\label{eq:distance-section}
d^{\Lambda}\left(\gamma(t^{\gamma}_k)\right) < d^{\Lambda} \left(\lambda(t^{\lambda}_k)\right) \qquad \forall\,k\in \N
\end{eqnarray}
in the case $\gamma ([0,1))\cap \Sigma_{tr}^1 = \lambda ([0,1))\cap \Sigma_{tr}^1 = \emptyset$, and
\begin{eqnarray}\label{eq:distance-section2}
d^{i_k}\left(\gamma(t_k^{\gamma})\right) < d^{i_k}\left(\lambda(t_k^{\lambda})\right) \qquad \forall  k\in \N
\end{eqnarray}
in the case where $\gamma ([0,1))\cap \Sigma_{tr}^1 \neq \emptyset$ and $\lambda ([0,1))\cap \Sigma_{tr}^1 \neq \emptyset$ are infinite.

Finally, since $\pi$ is a proper morphism, in order to prove Proposition \ref{prop:ComparisonMonodromic} it is enough to show a similar result, locally, at every point on the resolution space which belongs to the topological limit $\omega(\widetilde{\gamma}) = \omega(\widetilde{\lambda})$.

Since $\gamma$ is monodromic, if a point $\bar{z} \in \omega(\widetilde{\gamma})$ is a saddle of $\widetilde{\mathscr{L}}$, then there are two connected components of $E_{tan}$ which contain $\bar{z}$ (in other words, it satisfies the normal form given in Theorem \ref{thm:RSgeneral}(IV.ii)). Therefore, for each $\bar{z} \in \omega(\widetilde{\gamma})$, either the normal form (IV.i) or (IV.ii) of Theorem \ref{thm:RSgeneral} is verified. We study these two possibilities separately (we do not distinguish $(IV.i.b)$ and $(IV.i.c)$ in this part of the proof). In both cases, given local sections $\Lambda_{\bar{z}}^{j}$ for $j=1,2$  we consider the distance functions
\[
d_{\bar{z}}^j: \Lambda_{\bar{z}}^{j} \to \mathbb{R}
\]
where $d_{\bar{z}}^j(p)$ is the length (via $g^{\ast}$) of the half arc contained in $\Lambda_{\bar{z}}^j$ that joins $p$ to $E_{tan}$, and $\mbox{length}^{g^{\ast}}$ denotes the length with respect to $g^{\ast}$.

The next lemma handles the first case.

\begin{lemma}\label{lem:lengthRegular}
Recalling the notation of Theorem \ref{thm:RSgeneral}(IV.i),
assume that  there exists only one connected component of $E_{tan}$ which contains $\bar z$. For each point $p \in \Lambda^1_{\bar{z}}$, denote by $L(p)$ the half-leaf of $\widetilde{\mathscr{L}}$ whose boundary is given by $p$ and $\phi_{\bar{z}}(p)$. Then there exists $\epsilon>0$ (which depends only on the neighborhood of $\bar{z}$) such that
\[
d^1_{\bar{z}}(p) < d^1_{\bar{z}}(q)< \epsilon \quad \implies \quad {\rm length}^{g^{\ast}}(L( p)) \leq {\rm length}^{g^{\ast}}(L(q))
\]
\end{lemma}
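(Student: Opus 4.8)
The plan is to reduce the statement to a one-variable monotonicity property in flow-box coordinates. Since here $\bar z$ is a regular point of $\widetilde{\mathscr L}$ lying on a single component of $E_{tan}$, the flow-box theorem used to produce Theorem \ref{thm:RSgeneral}(IV.i) provides analytic coordinates $(s,w)$ near $\bar z$ in which $\widetilde{\mathscr L}=\langle\partial_s\rangle$, $E_{tan}=\{w=0\}$, the half-plane under consideration is $\{w\ge 0\}$ (the other one being treated symmetrically), and $\Lambda^1_{\bar z}=\{s=0\}$, $\Lambda^2_{\bar z}=\{s=1\}$; one may always arrange the latter by translating and rescaling $s$ along the leaves, which is legitimate since, $\bar z$ lying strictly between $\Lambda^1_{\bar z}\cap E_{tan}$ and $\Lambda^2_{\bar z}\cap E_{tan}$ on the leaf $E_{tan}$, the flow time along nearby leaves from $\Lambda^1_{\bar z}$ to $\Lambda^2_{\bar z}$ is a nowhere-zero analytic function near $\bar z$. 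Then for $p=(0,w)$ the half-leaf $L(p)$ is $\{(s,w):0\le s\le 1\}$ and $\phi_{\bar z}(p)=(1,w)$. Writing $g^{\ast}=E\,ds^2+2F\,ds\,dw+G\,dw^2$ with $E,F,G$ analytic (recall that $g$, hence $g^{\ast}$, has already been reduced to analytic), $E,G\ge 0$, and $g^{\ast}$ nondegenerate off $E$, we obtain
\[
\ell(w):={\rm length}^{g^{\ast}}\bigl(L((0,w))\bigr)=\int_0^1\sqrt{E(s,w)}\,ds,\qquad
\rho(w):=d^1_{\bar z}\bigl((0,w)\bigr)=\int_0^{w}\sqrt{G(0,t)}\,dt.
\]
Since $\Lambda^1_{\bar z}$ meets $E$ only at $(0,0)$ we have $G(0,t)>0$ for $0<t<w_0$, so $\rho$ is a strictly increasing homeomorphism of $[0,w_0)$ onto some $[0,\delta_1)$; hence for $\epsilon\le\delta_1$ the hypothesis $d^1_{\bar z}(p)<d^1_{\bar z}(q)<\epsilon$ becomes $0\le w_p<w_q<\rho^{-1}(\epsilon)$. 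It therefore suffices to produce $\epsilon'>0$ such that $\ell$ is non-decreasing on $[0,\epsilon')$, and then take $\epsilon:=\min\{\rho(\epsilon'),\delta_1\}$.

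I would then establish two properties of $\ell$. First, $\ell$ extends continuously to $w=0$ with $\ell(0)=\int_0^1\sqrt{E(s,0)}\,ds$, and this value equals $0$: indeed $\bar z$ belongs to the topological limit $\omega(\widetilde\gamma)$ of a convergent transverse-singular trajectory, and, $\widetilde{\mathscr L}$ being regular along $\{w=0\}$, the pieces of $\widetilde\gamma$ lying in the flow box at its successive returns near $\bar z$ are precisely the segments $L((0,w_k))$ with $w_k\to 0$; their limit $\{w=0,\,0\le s\le 1\}$ thus lies in $\omega(\widetilde\gamma)$, which is contained in the fibre $\pi^{-1}(\bar y)$ (as in the proof of Proposition \ref{prop:TopDichSing}). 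Hence $\pi$ is constant along that segment, so $\partial_s\pi\equiv 0$ there and $E(s,0)=|\partial_s\pi(s,0)|^2_g\equiv 0$. Second, $\ell$ is eventually monotone near $0^{+}$: setting $F(a,w):=\int_0^a\sqrt{E(s,w)}\,ds$, so that $\partial_a F=\sqrt{E(a,w)}$ has subanalytic right-hand side and $\ell(w)=F(1,w)$, the function $\ell$ is definable in an o-minimal structure extending the subanalytic one (the Pfaffian closure), hence is monotone and continuous on a sufficiently small interval $(0,\epsilon')$; this is exactly the kind of tameness provided by the Hardy-field results recalled in the appendices.

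Combining the two, $\ell$ is monotone on $(0,\epsilon')$, continuous at $0$, nonnegative, and $\ell(0)=0$; an eventually decreasing function with these properties is forced to vanish identically near $0$, so in every case $\ell$ is non-decreasing on $[0,\epsilon')$, which yields the lemma with $\epsilon=\min\{\rho(\epsilon'),\delta_1\}$. I expect the eventual-monotonicity input to be the main obstacle: one cannot argue pointwise, since $\partial_w E(s,w)$ need not have constant sign near $\{w=0\}$ — e.g. $E=w^2(s-w)^2$ has $\partial_wE<0$ on $\{w<s<2w\}$ while $\ell$ is still increasing near $0$ — so it is genuinely necessary to integrate the $s$-variable out and invoke a finiteness theorem rather than an elementary estimate. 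The structural input $E(s,0)\equiv 0$, which rests on resolution of singularities and on the placement of $\omega(\widetilde\gamma)$ inside a fibre of $\pi$, is the second essential ingredient; note that without it (that is, if $\pi$ had log-rank $1$ along $E_{tan}$ near $\bar z$) the monotonicity could genuinely fail.
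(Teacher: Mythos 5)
Your proof follows the same overall plan as the paper's: pass to flow-box coordinates near the regular point $\bar z$, study $\ell(w):={\rm length}^{g^{\ast}}(L((0,w)))$ as a function of the foot-point on $\Lambda^1_{\bar z}$, observe that $\ell$ extends continuously by $\ell(0)=0$, and deduce monotonicity near $0$. The difference is in how the last step is justified. The paper asserts that $\ell$ is \emph{analytic} ``since all objects are analytic'' and concludes monotonicity from $\ell\ge 0$, $\ell(0)=0$; implicitly this requires the integrand $\sqrt{g^{\ast}(\widetilde{\mathcal Z},\widetilde{\mathcal Z})}$ to be analytic up to $E$, which is not manifest: $g^{\ast}$ degenerates on $E$, and the square root of a nonnegative analytic function vanishing along $\{w=0\}$ need not be analytic there (your example $E=w^2(s-w)^2$ already illustrates the kind of non-uniform vanishing one has to worry about; in fact for something like $E(s,w)=w^{2}(s^2+w^{2})$ one gets logarithmic terms in $\ell$). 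You instead argue for \emph{eventual monotonicity} via tameness: $\ell$ is a parametrized integral of a subanalytic function, hence definable in an o-minimal structure, hence eventually monotone. This is a weaker property than analyticity but is exactly what is needed, and it is robust — so your proof patches a step that the paper's argument handles somewhat cavalierly. The verification that $\ell(0)=0$, via the observation that the segment $\{w=0,\ 0\le s\le 1\}$ lies in $\omega(\widetilde\gamma)\subset\pi^{-1}(\bar y)$ and hence $\partial_s\pi\equiv 0$ there, is correct and usefully unpacks the paper's terse assertion ``it is equal to zero if and only if $p\in E$.''

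One small misattribution: you credit the eventual-monotonicity to ``the Hardy-field results recalled in the appendices,'' but Appendix~\ref{APPHardy} (Speissegger's quasianalytic Hardy field, \cite{speiss2}) concerns transition maps of hyperbolic saddles and is used in Lemma~\ref{lem:lengthSaddle}, not parametrized integrals. The correct input for your step is the theory of parametrized integration of subanalytic functions (Lion--Rolin, Comte--Lion--Rolin, or Cluckers--Miller: such integrals are ``constructible''/$\log$-analytic, hence definable in $\R_{\mathrm{an},\exp}$, which is o-minimal), not the Pfaffian/Ilyashenko Hardy field per se. With that citation corrected, your argument is a valid and in fact more careful rendering of the same proof.
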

\begin{proof}[Proof of Lemma \ref{lem:lengthRegular}]
Note that $\widetilde{\mathscr{L}}$ is non-singular, so there exists a non-singular locally defined vector field $\widetilde{\mathcal{Z}}$ which generates $\widetilde{\mathscr{L}}$. Denote by $\varphi^{\widetilde{\mathcal{Z}}}_p(t)$ the flow of $\widetilde{\mathcal{Z}}$ with time $t$ and initial condition $p$. Since $\widetilde{\mathcal{Z}}$ is non-zero, for each $p\in \Lambda^1_{\bar{z}}$ the minimal time $t_p$ so that $\varphi_p^{\widetilde{\mathcal{Z}}}(t_p) \in \Lambda^2_{\bar{z}}$ is an analytic function in $p$. It follows that the function
\[
{\rm length}^{g^{\ast}}(L(p)) = \int_{0}^{t_p} \left| {\widetilde{\mathcal{Z}}}(\varphi^{\widetilde{\mathcal{Z}}}_p(s))\right|^{g^{\ast}}ds
\]
is analytic over $\Lambda^1_{\bar{z}}$, since all objects are analytic. Furthermore, ${\rm length}^{g^{\ast}}(L(p)) \geq 0$, and it is equal to zero if and only if $p\in E$. This implies the desired monotonicity property.
\end{proof}

We now handle the second case.

\begin{lemma}\label{lem:lengthSaddle}
Recalling the notation of Theorem \ref{thm:RSgeneral}(IV.ii),
assume that there exist two connected component of $E_{tan}$ which contains $\bar z$. For each point $p \in \Lambda^1_{\bar{z}}$, denote by $L(p)$ the half-leaf of $\widetilde{\mathscr{L}}$ whose boundary is $p$ and $\phi_{\bar{z}}(p)$. Then there exists $K>0$ and $\epsilon>0$ (which depends only on the neighborhood of $\bar{z}$) such that
\[
d^1_{\bar{z}}(p) < d^1_{\bar{z}}(q)<\epsilon \quad \implies \quad {\rm length}^{g^{\ast}}(L( p)) \leq K\, {\rm length}^{g^{\ast}}(L(q))
\]
\end{lemma}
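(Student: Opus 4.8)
The plan is to follow the analytic reduction underlying Lemma~\ref{lem:lengthRegular}, but to replace the length of the half-leaf $L(p)$ by the variation of the distinguished monomial $\pmb u^\alpha$ along $L(p)$, and then — since $\phi_{\bar z}$ need not be analytic here — to substitute for the analyticity argument of Lemma~\ref{lem:lengthRegular} the tameness (o-minimal definability) of the saddle transition map provided by \cite{speiss2}.

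First I would normalize the data using Theorem~\ref{thm:RSgeneral}(IV.ii.c): up to a bi-Lipschitz change we may take $g^{\ast}=h_{\bar z}=(d\pmb u^\alpha)^2+(d\pmb u^\beta)^2$ on $U_{\bar z}$, and, relabelling $\alpha,\beta$, we may assume $|\widetilde{\mathcal Z}(\pmb u^\alpha)|\ge|\widetilde{\mathcal Z}(\pmb u^\beta)|$ on $V_{\bar z}$ together with $\widetilde{\mathcal Z}(\pmb u^\alpha)\ne 0$ on $V_{\bar z}\setminus E$. Parametrising $L(p)$ by the flow $\varphi^{\widetilde{\mathcal Z}}$ on an interval $[0,t_p]$ (finite for $p\notin E_{tan}$, since the passage near a hyperbolic saddle takes finite time), and using that $(|\widetilde{\mathcal Z}|^{h_{\bar z}})^2=\widetilde{\mathcal Z}(\pmb u^\alpha)^2+\widetilde{\mathcal Z}(\pmb u^\beta)^2$, hence $|\widetilde{\mathcal Z}(\pmb u^\alpha)|\le|\widetilde{\mathcal Z}|^{h_{\bar z}}\le\sqrt2\,|\widetilde{\mathcal Z}(\pmb u^\alpha)|$, one gets
\[
{\rm length}^{g^{\ast}}(L(p))\ \asymp\ \int_0^{t_p}\Bigl|\tfrac{d}{ds}\,\pmb u^\alpha\bigl(\varphi^{\widetilde{\mathcal Z}}_s(p)\bigr)\Bigr|\,ds .
\]
Since the interior of $L(p)$ stays in the connected set $V_{\bar z}\setminus E$, on which $\widetilde{\mathcal Z}(\pmb u^\alpha)$ has a constant sign, the integrand does not change sign, the integral telescopes, and therefore
\[
{\rm length}^{g^{\ast}}(L(p))\ \asymp\ \bigl|\pmb u^\alpha(\phi_{\bar z}(p))-\pmb u^\alpha(p)\bigr| ,
\]
with multiplicative constants depending only on $\bar z$.

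Next I would parametrise $\Lambda^1_{\bar z}$ by $g^{\ast}$-arc length $\sigma=d^1_{\bar z}$ from $E_{tan}$, writing $p(\sigma)$ for the corresponding point, and set $G(\sigma):=|\pmb u^\alpha(\phi_{\bar z}(p(\sigma)))-\pmb u^\alpha(p(\sigma))|$, so that ${\rm length}^{g^{\ast}}(L(p(\sigma)))\asymp G(\sigma)$. The key input is that, by the regularity of transition maps at hyperbolic saddles \cite{speiss2}, the bijection $\phi_{\bar z}$ is definable in an o-minimal structure containing the restricted analytic functions; since $G$ is obtained from $\phi_{\bar z}$ by composing with the subanalytic functions $\pmb u^\alpha$, $d^1_{\bar z}$ and their inverses, $G$ is definable as well. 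Moreover $G(\sigma)>0$ for $\sigma>0$, because $\pmb u^\alpha$ is strictly monotone along the non-trivial leaf $L(p(\sigma))$. By the monotonicity theorem for o-minimal structures, $G$ is monotone on some interval $(0,\epsilon)$, and I would split into two cases. If $\lim_{\sigma\to0^+}G(\sigma)=0$, then $G$ is nondecreasing on $(0,\epsilon)$, so for $d^1_{\bar z}(p)<d^1_{\bar z}(q)<\epsilon$ we get ${\rm length}^{g^{\ast}}(L(p))\lesssim G(d^1_{\bar z}(p))\le G(d^1_{\bar z}(q))\lesssim{\rm length}^{g^{\ast}}(L(q))$. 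If instead $\lim_{\sigma\to0^+}G(\sigma)>0$, then after shrinking $\epsilon$ the function $G$ is bounded above and below by positive constants on $(0,\epsilon]$, and the same inequality follows trivially. In either case this yields the constant $K$ and proves the lemma.

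The main obstacle is the definability step: a priori, as $\sigma\to0^+$ the half-leaf $L(p(\sigma))$ makes a passage arbitrarily close to the saddle $\bar z$, and nothing elementary prevents $G$ from oscillating there — in which case no uniform monotone comparison could hold; it is precisely the tameness of the Dulac-type transition map $\phi_{\bar z}$ coming from \cite{speiss2}, rather than any soft dynamical argument, that rules this out. A secondary subtlety to treat with care is that, in contrast with Lemma~\ref{lem:lengthRegular}, $\pmb u^\alpha$ restricted to $\Lambda^1_{\bar z}$ or $\Lambda^2_{\bar z}$ may fail to vanish at $E_{tan}$ (this occurs exactly when one of the two components of $E_{tan}$ through $\bar z$ has log-rank $1$), which is why the second case above — where ${\rm length}^{g^{\ast}}(L(\cdot))$ stays bounded away from $0$ — genuinely arises and must be handled separately.
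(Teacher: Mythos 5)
Your overall strategy matches the paper's: normalize the metric to $h_{\bar z}=(d\pmb u^\alpha)^2+(d\pmb u^\beta)^2$, use the constant sign of $\widetilde{\mathcal Z}(\pmb u^\alpha)$ on $V_{\bar z}\setminus E$ to reduce ${\rm length}^{g^\ast}(L(p))$ up to a factor $\sqrt2$ to $|\pmb u^\alpha(p)-\pmb u^\alpha(\phi_{\bar z}(p))|$, and then appeal to the tameness of the Dulac--type transition map from \cite{speiss2} to obtain eventual monotonicity of this comparison function near the corner. Your closing case split (whether or not the limit is $0$) is also the right way to turn eventual monotonicity into the stated two-sided comparison. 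The first two steps coincide with the paper's inequalities \eqref{eq:metricInequalities1}--\eqref{eq:metricInequalities2}.

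The genuine issue is the justification of the tameness step. You assert that \cite{speiss2} gives that $\phi_{\bar z}$ is ``definable in an o-minimal structure containing the restricted analytic functions'', and then conclude the definability of $G$ by composing with the subanalytic maps $\pmb u^\alpha$, $d^1_{\bar z}$ and their inverses. But \cite{speiss2} does not assert o-minimality: it constructs a \emph{Hardy field} $\mathcal F$ of germs at $+\infty$, closed under field operations and differentiation, which contains (precomposed with $\exp(-x)$) the transition maps of hyperbolic saddles; the relevant consequence (Lemma~\ref{lem:HardyField} of the paper) is eventual monotonicity of each element of $\mathcal F$. A Hardy field is \emph{not} automatically closed under arbitrary compositions with subanalytic reparametrizations, so it is not licit to push $\phi_{\bar z}$ through $d^1_{\bar z}$ and its inverse and still claim membership in $\mathcal F$; this is precisely the step that your proposal leaves unsupported. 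The paper avoids this by choosing explicit analytic parametrizations $\lambda_i:[0,1]\to\Lambda^i_{\bar z}$ normalized so that $\pmb u^\alpha\circ\lambda_i(t)=t^{a_i}$; with this choice,
$$\pmb u^\alpha\circ\lambda_1\circ\exp(-x)-\pmb u^\alpha\circ\phi_{\bar z}\circ\lambda_1\circ\exp(-x)
=\exp(-a_1 x)-\bigl(\lambda_2^{-1}\circ\phi_{\bar z}\circ\lambda_1\circ\exp(-x)\bigr)^{a_2}$$
is visibly a \emph{field} combination of $\exp(-x)$ and the Speissegger germ $\lambda_2^{-1}\circ\phi_{\bar z}\circ\lambda_1\circ\exp(-x)\in\mathcal F$, hence lies in $\mathcal F$, and Lemma~\ref{lem:HardyField} then applies. (The subsequent reparametrization from the $\pmb u^\alpha$-coordinate $t$ to the arc-length $d^1_{\bar z}$ is analytic and monotone on the fixed section $\Lambda^1_{\bar z}$, so it is harmless.) To repair your argument you should either import this explicit parametrization and keep entirely within the Hardy-field framework, or else invoke a theorem that actually delivers an o-minimal structure in which the Ilyashenko transition maps are definable (this is known from later work, but it is not what \cite{speiss2} states and is a stronger hypothesis than the proof needs).

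Your ``secondary subtlety'' about $\pmb u^\alpha$ possibly not vanishing on $E_{tan}$ is a fair observation, and it is exactly what the second branch of your case split absorbs; the paper handles it implicitly by the normalization $\pmb u^\alpha\circ\lambda_i(t)=t^{a_i}$, which is only non-degenerate when the exponent is positive, and by concluding briefly from the inequalities.
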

\begin{proof}[Proof of Lemma \ref{lem:lengthSaddle}]
Up to changing $K>0$, it suffices to prove the result for a metric that is  bi-lipschitz equivalent to $g^{\ast}$. Therefore, without loss of generality we assume that $g^{\ast} = h_{\bar{z}} = (d \pmb{u}^{\alpha})^2+(d \pmb{u}^{\beta})^2$ (see Theorem \ref{thm:RSgeneral}(IV.ii.b)). Although the definition of $\alpha$ and $\beta$ are not symmetric (see \eqref{eq:HP12}), this does not interfere in this part of the proof; so we assume that $\widetilde{\mathcal{Z}}(\pmb{u}^{\alpha}) \neq 0$ everywhere outside of $E$, and that $|\widetilde{\mathcal{Z}}(\pmb{u}^{\alpha})|\geq |\widetilde{\mathcal{Z}}(\pmb{u}^{\beta})|$ everywhere in the neighborhood of $\bar{z}$, where $\widetilde{\mathcal{Z}}$ is a local generator of $\widetilde{\mathscr{L}}$ (the other case is analogous).

Denote by $\varphi^{\widetilde{\mathcal{Z}}}_p(t)$ the flow of $\widetilde{\mathcal{Z}}$ with time $t$ and initial condition $p$. For each $p\in \Lambda^1_{\bar{z}}\setminus E$ the minimal time $t_p$ so that $\varphi_p^{\widetilde{\mathcal{Z}}}(t_p) \in \Lambda^2_{\bar{z}}$ is an analytic function over $\Lambda^1_{\bar{z}}\setminus E$, but it does not admit an analytic extension to $\Lambda^1_{\bar{z}}\cap E$. In particular, the function
\[
{\rm length}^{g^{\ast}}(L(p)) = \int_{0}^{t_p} \left| \widetilde{\mathcal{Z}}(\varphi_p^{\widetilde{\mathcal{Z}}}(s))\right|^{g^{\ast}}ds
\]
does not admit an analytic extension to $\Lambda^1_{\bar{z}}\cap E$. Nevertheless, we note that:
\[
\begin{aligned}
{\rm length}^{g^{\ast}}(L(p)) &= \int_{0}^{t_p} \sqrt{\left[\widetilde{\mathcal{Z}}(\pmb{u}^{\alpha})\circ\varphi_p^{\widetilde{\mathcal{Z}}}(s)\right]^2  + \left[\widetilde{\mathcal{Z}}(\pmb{u}^{\beta})\circ\varphi_p^{\widetilde{\mathcal{Z}}}(s)\right]^2}\,ds\\
&=\int_{0}^{t_p} \sqrt{ d(\pmb{u}^{\alpha} \circ \varphi_p^{\widetilde{\mathcal{Z}}}(s) )^2 + d(\pmb{u}^{\beta} \circ \varphi_p^{\widetilde{\mathcal{Z}}}(s) )^2}\, ds
\end{aligned}
\]
Now, since $\widetilde{\mathcal{Z}}(\pmb{u}^{\alpha})$ is analytic and vanishes only on $E$, we conclude that $d (\pmb{u}^{\alpha} \circ \varphi_p^{\widetilde{\mathcal{Z}}}(s)) $ is of constant sign outside of $E$. On the one hand, this implies that
\begin{equation}\label{eq:metricInequalities1}
\begin{aligned}
{\rm length}^{g^{\ast}}(L(p)) &\geq \int_{0}^{t_p} |d(\pmb{u}^{\alpha} \circ \varphi_p^{\widetilde{\mathcal{Z}}}(s) |ds\\
&= \left|\pmb{u}^{\alpha}(p) - \pmb{u}^{\alpha}\circ \phi_{\bar{z}}(p)\right|
\end{aligned}
\end{equation}
On the other hand, from the fact that $|d(\pmb{u}^{\alpha} \circ \varphi_p^{\widetilde{\mathcal{Z}}}(s)) | \geq |d(\pmb{u}^{\beta} \circ \varphi_p^{\widetilde{\mathcal{Z}}}(s)) |$, we conclude that
\begin{equation}\label{eq:metricInequalities2}
\begin{aligned}
{\rm length}^{g^{\ast}}(L(p)) &\leq \sqrt{2} \int_{0}^{t_p} |d(\pmb{u}^{\alpha} \circ \varphi_p^{\widetilde{\mathcal{Z}}}(s) |ds\\
&= \sqrt{2} \left|\pmb{u}^{\alpha}(p) - \pmb{u}^{\alpha}\circ \phi_{\bar{z}}(p)\right|
\end{aligned}
\end{equation}
Although the function $\phi_{\bar{z}}(p)$ is analytic outside of $E$, it does not admit an analytic extension to $E$ and the treatment of this case differs from the one in Lemma \ref{lem:lengthRegular}.

In order to be precise, denote by $\lambda_i:[0,1] \to \Lambda^i_{\bar{z}}$ an analytic parametrizations of the sections $\Lambda^i_{\bar{z}}$ such that $\lambda_i(0) \in E$, 
for $i=1,2$. They can be always choosen so that $\pmb{u}^{\alpha} \circ \lambda_i(t) = t^{a_i}$ for some $a_i \in \mathbb{N}$. Now, by \cite[Theorem 1.4]{speiss2}, the composition $\lambda_2^{-1} \circ \phi_{\bar{z}} \circ \lambda_1 \circ (\exp (-x))$ belongs to a Hardy field $\mathcal{F}$ of germs of function at infinity which also contains the exponential function. Thus, since it is a field, it follows that also the function
\[
\begin{aligned}
\pmb{u}^{\alpha} \circ \lambda_1 \circ (\exp (-x))- \pmb{u}^{\alpha} \circ \phi_{\bar{z}}\circ \lambda_1 \circ (\exp (-x)) =\\
= \left(\exp (-x)\right)^{a_1} -  \left(\lambda_2^{-1} \circ\phi_{\bar{z}}\circ \lambda_1 \circ (\exp (-x))\right)^{a_2}
\end{aligned}
\]
belongs to $\mathcal{F}$. Since $\mathcal{F}$ is a Hardy field, Lemma \ref{lem:HardyField} implies that the function
\[
\pmb{u}^{\alpha}\circ \lambda_1(t) - \pmb{u}^{\alpha}\circ \phi_{\bar{z}} \circ \lambda_1(t)
\]
is monotone for $t$ sufficiently close to $0$ (that is, for $p = \lambda_1(t)$ sufficiently close to $E$). We conclude easily from this observation and the inequalities \eqref{eq:metricInequalities1} and \eqref{eq:metricInequalities2}.

\end{proof}

As observed before, Proposition \ref{prop:ComparisonMonodromic} now follows from the two lemmas above and the previous considerations made in all this section.


\appendix

\section{Singular horizontal paths}\label{SECSingular}

Let $M$ be a smooth connected manifold of dimension $n\geq 3$ and $\Delta$ a totally nonholonomic distribution of rank $m<n$ on $M$. As in the introduction, we assume that $\Delta$ is generated globally on $M$ by a family of $k$ smooth vector fields $X^1, \ldots, X^k$. For every $i=1, \ldots, k$,  we define the Hamiltonian $h^i :T^*M \rightarrow \R$ by 
$$
h^i(x,p) := p\cdot X^i(x) \qquad \forall\,\psi=(x,p) \in T^*M,
$$
and denote by $\vec{h}^i$ the corresponding Hamiltonian vector field on $T^*M$ with respect to canonical symplectic structure $\omega$.  Then we set
$$
\vec{\Delta}_\psi :=  \Bigl\{\vec{h}^1(\psi), \ldots, \vec{h}^k(\psi)\Bigr\} \qquad \forall\, \psi \in T^*M.
$$
By construction, $\vec{\Delta}$ is a smooth distribution of rank $m$ in $T^*M$ which does not depend on the choice of the family $X^1, \ldots, X^k$. Let $\Delta^{\perp}$ be  the annihilator of $\Delta$ in $T^*M$, defined by  
$$
\Delta^\perp:=\{(x,p)\in T^*M\,\vert\,p\perp \Delta_x\}\subset T^*M,
$$
and $\pi:T^*M\to M$ the canonical projection. Singular horizontal paths  can be characterized as follows (we leave the reader to check that  \cite[Proposition 1.11]{riffordbook} can be formulated in this way).  

\begin{proposition}\label{PROPsing}
Let $\gamma : [0,1] \rightarrow M$  be fixed, then the two following properties  are equivalent:
\begin{itemize}
\item[(i)] $\gamma$ is a singular horizontal path with respect to $\Delta$.
\item[(ii)] There exists  $\psi:[0,1]\rightarrow \Delta^{\perp}\setminus \{0\}$ such that $\psi$ is
a horizontal path with respect to $\vec{\Delta}$ and $\pi(\psi)=\gamma$.
\end{itemize}
\end{proposition}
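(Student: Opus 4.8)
The final statement to prove is Proposition~\ref{PROPsing}, characterizing singular horizontal paths via lifts to the annihilator $\Delta^\perp$. Let me sketch the proof.

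\medskip

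The plan is to reduce the statement to the Pontryagin Maximum Principle (or rather to the well-known first-order characterization of singular controls), translating between the control-theoretic language used in the Introduction and the symplectic language of $\vec\Delta$ and $\Delta^\perp$. First I would recall that, fixing a point $x$ and writing $\gamma = \mathbf{x}(\cdot\,;x,u)$ for a control $u \in \mathcal U^x$, the control $u$ is singular precisely when there exists a nonzero covector field $p:[0,1]\to T^*M$ along $\gamma$ — an \emph{abnormal lift} — solving the adjoint equation $\dot p(t) = -\sum_i u_i(t)\, \partial_x\big(p\cdot X^i\big)(\gamma(t),p(t))$ together with the constraint $p(t)\cdot X^i(\gamma(t)) = 0$ for all $i=1,\dots,k$ and a.e.\ $t$; this is exactly the content of \cite[Proposition 1.11]{riffordbook}. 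The constraint $h^i(\psi(t))=0$ for all $i$ says precisely that $\psi(t)=(\gamma(t),p(t))$ lies in $\Delta^\perp\setminus\{0\}$, since $\Delta_{\gamma(t)} = \mathrm{Span}\{X^1(\gamma(t)),\dots,X^k(\gamma(t))\}$. So the two pieces to match are: (a) the adjoint ODE versus the statement that $\psi$ is horizontal for $\vec\Delta$, and (b) the fact that the constraint is propagated by the dynamics.

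\medskip

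For (a), the key computation is that a curve $\psi(t)=(\gamma(t),p(t))$ with $\dot\gamma(t)=\sum_i u_i(t)X^i(\gamma(t))$ satisfies the Hamiltonian adjoint equations for the time-dependent Hamiltonian $\sum_i u_i(t) h^i$ if and only if $\dot\psi(t) = \sum_i u_i(t)\,\vec h^i(\psi(t))$, i.e.\ $\dot\psi(t)\in\vec\Delta_{\psi(t)}$ — which is exactly the assertion that $\psi$ is a horizontal path with respect to $\vec\Delta$. Here one uses that the Hamiltonian vector field $\vec h^i$ of $h^i(x,p)=p\cdot X^i(x)$ projects under $\pi$ to $X^i$, so that $\pi(\psi)=\gamma$ is automatic once the lift solves $\dot\psi=\sum u_i\vec h^i$. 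Thus $(i)\Rightarrow(ii)$ follows by taking $\psi=(\gamma,p)$ with $p$ the abnormal lift, and $(ii)\Rightarrow(i)$ follows by reading the same equivalence backwards: an integral curve of $\vec\Delta$ in $\Delta^\perp\setminus\{0\}$ projects to some $\gamma$, provides (for the $u$ realizing $\dot\gamma=\sum u_i X^i$) a nontrivial solution of the adjoint system with the annihilation constraint, hence witnesses that $u$ is singular. One should also note, as remarked after the definition in the Introduction, that singularity of a horizontal path is independent of the chosen generating family and of the parametrizing control, so no care about the choice of $u$ is needed beyond existence.

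\medskip

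I expect the only genuine subtlety — and the main point to get right — is the \emph{propagation of the constraint}: one must check that if $h^i(\psi(0))=0$ for all $i$ and $\psi$ solves $\dot\psi=\sum_j u_j\vec h^j$, then $h^i(\psi(t))\equiv 0$. This is the statement that $\vec\Delta$ is tangent to $\Delta^\perp$ along $\Delta^\perp$, equivalently $\frac{d}{dt}h^i(\psi(t)) = \sum_j u_j(t)\{h^i,h^j\}(\psi(t)) = \sum_j u_j(t)\, p(t)\cdot[X^j,X^i](\gamma(t))$, which vanishes because $[X^j,X^i](\gamma(t))\in [\Delta,\Delta]_{\gamma(t)}$ and the abnormal lift of a singular curve in fact annihilates $[\Delta,\Delta]$ as well (equivalently, the Poisson bracket identity closes up along the trajectory by Gronwall once one writes it as a linear homogeneous ODE for the vector $(h^1(\psi),\dots,h^k(\psi))$). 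Since the reference \cite[Proposition 1.11]{riffordbook} already packages precisely this, the proof reduces to the bookkeeping of the equivalence above, and I would simply refer to that proposition after making the dictionary explicit; the excerpt itself says ``we leave the reader to check,'' so a short paragraph spelling out the dictionary $\{h^i=0\ \forall i\}\Leftrightarrow \Delta^\perp$ and $\dot\psi\in\vec\Delta \Leftrightarrow$ adjoint equations suffices.
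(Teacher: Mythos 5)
Your first two paragraphs contain a correct proof and match what the paper intends: Proposition~\ref{PROPsing} is stated as a direct reformulation of \cite[Proposition~1.11]{riffordbook}, left to the reader, and the dictionary you spell out (constraint $h^i(\psi)\equiv 0 \Leftrightarrow \psi \in \Delta^{\perp}$; adjoint ODE plus $\dot\gamma = \sum_i u_i X^i(\gamma) \Leftrightarrow \dot\psi = \sum_i u_i \vec h^i(\psi) \Leftrightarrow$ horizontality for $\vec\Delta$, with $\pi\circ\psi=\gamma$ automatic because $\vec h^i$ projects to $X^i$) is exactly the required translation in both directions.

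The third paragraph is a red herring, and a couple of its claims are incorrect. No constraint-propagation step is needed here: in direction (ii)$\Rightarrow$(i) the hypothesis is that $\psi$ takes values in $\Delta^{\perp}\setminus\{0\}$, so $h^i(\psi(t))\equiv 0$ is given rather than derived; and in direction (i)$\Rightarrow$(ii) the fact that $p(t)\perp \Delta_{\gamma(t)}$ holds for all $t$ is already part of the characterization in \cite[Proposition~1.11]{riffordbook}, not something to be re-established. The assertion that the abnormal lift ``in fact annihilates $[\Delta,\Delta]$'' is not true in general at this level of generality---that is the Goh condition, a second-order necessary condition for \emph{minimizing} singular controls, not a consequence of first-order singularity. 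Differentiating the constraint only yields $\sum_j u_j(t)\,p(t)\cdot[X^j,X^i](\gamma(t))=0$ for a.e.\ $t$, which is weaker. Likewise, $\{h^i,h^j\}$ is not in general a linear combination of $h^1,\dots,h^k$ with smooth coefficients, so the proposed Gronwall argument for a ``linear homogeneous ODE'' in $(h^1(\psi),\dots,h^k(\psi))$ does not close. None of this affects the validity of the proof in your first two paragraphs, but the paragraph should be removed or rewritten so as not to suggest a nonexistent subtlety or an unsupported annihilation statement.
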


The following characterization is due to Hsu \cite{hsu91} and plays a major role in the proof of Proposition \ref{prop:MonoFiniteLenght}. We recall that, for every $\psi \in T^*M$, $\mbox{ker} (\omega_{\vert \Delta^{\perp}})_{\psi}$ is defined as
$$
\mbox{ker} (\omega_{\vert \Delta^{\perp}})_{\psi} := \Bigl\{ \xi \in T_{\psi}\Delta^{\perp} \, \vert \, \omega_{\psi}(\xi,\eta)=0, \, \forall\,\eta \in T_{\psi}\Delta^{\perp} \Bigr\} =  \left( T_{\psi} \Delta^{\perp} \right)^{\omega} \cap \, T_{\psi} \Delta^{\perp},
$$
where $ \left( T_{\psi} \Delta^{\perp} \right)^{\omega} $    denotes the symplectic complement of $ T_{\psi} \Delta^{\perp}$. 

\begin{proposition}\label{PROPcharacteristic}
Let $\gamma : [0,1] \rightarrow M$  be fixed, then the two following properties  are equivalent:
\begin{itemize}
\item[(i)] $\gamma$ is a singular horizontal path with respect to $\Delta$.
\item[(ii)] There exists an absolutely continuous curve $\psi:[0,1]\rightarrow \Delta^{\perp}\setminus\{0\}$ with derivatives in $L^2$ such that $\pi(\psi)=\gamma$ and 
$$
\dot{\psi}(t)\in \mbox{ker} (\omega_{\vert \Delta^{\perp}})_{\psi(t)} \quad \mbox{for a.e. } t\in [0,1].
$$
\end{itemize}
\end{proposition}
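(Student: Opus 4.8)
The plan is to obtain Proposition~\ref{PROPcharacteristic} from Proposition~\ref{PROPsing} by translating the constraint ``$\psi$ is horizontal with respect to $\vec{\Delta}$ and takes values in $\Delta^{\perp}$'' into the intrinsic symplectic condition appearing in~(ii). The whole argument reduces to a pointwise linear-algebra identity, namely that for every $\psi\in\Delta^{\perp}$
\[
\ker\bigl(\omega_{\vert \Delta^{\perp}}\bigr)_{\psi}\;=\;\vec{\Delta}_{\psi}\cap T_{\psi}\Delta^{\perp}.
\]
Granting this, the equivalence is almost formal: an absolutely continuous curve $\psi$ with $L^2$ derivative that stays in the submanifold $\Delta^{\perp}$ has $\dot\psi(t)\in T_{\psi(t)}\Delta^{\perp}$ for a.e.\ $t$ (differentiate a local defining submersion along $\psi$), so the condition $\dot\psi(t)\in\vec{\Delta}_{\psi(t)}$ of Proposition~\ref{PROPsing}(ii) becomes equivalent to $\dot\psi(t)\in\vec{\Delta}_{\psi(t)}\cap T_{\psi(t)}\Delta^{\perp}=\ker(\omega_{\vert \Delta^{\perp}})_{\psi(t)}$; and conversely $\ker(\omega_{\vert \Delta^{\perp}})_{\psi}\subset\vec{\Delta}_{\psi}$, so a curve satisfying~(ii) is automatically horizontal with respect to $\vec{\Delta}$.

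To prove the identity, first recall that $\ker(\omega_{\vert \Delta^{\perp}})_{\psi}=(T_{\psi}\Delta^{\perp})^{\omega}\cap T_{\psi}\Delta^{\perp}$ by definition of the symplectic complement, so it suffices to show $(T_{\psi}\Delta^{\perp})^{\omega}=\vec{\Delta}_{\psi}$. I would argue locally near $x_0=\pi(\psi)$: choose $m$ of the vector fields, say $X^1,\dots,X^m$, forming a local frame of $\Delta$; then $\Delta^{\perp}=\{h^1=\dots=h^m=0\}$ on a neighbourhood of $\psi$, and since the $dh^i$ are linearly independent there (already on fibre directions, their fibre parts being $X^1(x),\dots,X^m(x)$), $\Delta^{\perp}$ is a codimension-$m$ submanifold with $T_{\psi}\Delta^{\perp}=\bigcap_{i=1}^{m}\ker(dh^i)_{\psi}$. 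Now $v\in(T_{\psi}\Delta^{\perp})^{\omega}$ iff the covector $\iota_v\omega_{\psi}$ annihilates $\bigcap_i\ker(dh^i)_{\psi}$, i.e.\ iff $\iota_v\omega_{\psi}\in\operatorname{span}\{(dh^1)_{\psi},\dots,(dh^m)_{\psi}\}$; since $dh^i=\iota_{\vec{h}^i}\omega$ and $\omega$ is nondegenerate, this is equivalent to $v\in\operatorname{span}\{\vec{h}^1(\psi),\dots,\vec{h}^m(\psi)\}$. Finally, writing the remaining generators as $X^i=\sum_{j\le m}a_{ij}X^j$ near $x_0$ gives $h^i=\sum_{j\le m}a_{ij}h^j$, hence $\vec{h}^i(\psi)=\sum_{j\le m}a_{ij}(x_0)\,\vec{h}^j(\psi)$ on $\Delta^{\perp}$ (the terms involving $\operatorname{grad}a_{ij}$ drop out because they carry a factor $h^j(\psi)=0$), so $\vec{\Delta}_{\psi}=\operatorname{span}\{\vec{h}^1(\psi),\dots,\vec{h}^m(\psi)\}=(T_{\psi}\Delta^{\perp})^{\omega}$. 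I would add the remark that, because $\{h^i,h^j\}(\psi)=\psi\cdot[X^i,X^j]$ need not vanish on $\Delta^{\perp}$, this subspace is in general strictly smaller than $\vec{\Delta}_{\psi}$, which is exactly what makes the characteristic direction one-dimensional over the Martinet surface.

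With the identity in hand I would assemble the two implications as above, invoking Proposition~\ref{PROPsing} in both directions and (if one prefers to phrase horizontality through the control system) a routine measurable selection to write $\dot\psi=\sum_i u_i\vec{h}^i(\psi)$ with $u\in L^2$. I do not foresee a genuine obstacle: the only points requiring care are the claim $(T_{\psi}\Delta^{\perp})^{\omega}=\vec{\Delta}_{\psi}$ and the elementary fact that an absolutely continuous curve confined to a submanifold is tangent to it almost everywhere; the proposition is, in essence, the symplectic reformulation of the abnormal-extremal characterization of Proposition~\ref{PROPsing}, following Hsu~\cite{hsu91}.
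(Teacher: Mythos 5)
Your proposal is correct and follows essentially the same route as the paper: both arguments reduce the statement to the pointwise identity $\ker(\omega_{\vert\Delta^{\perp}})_{\psi}=\vec{\Delta}_{\psi}\cap T_{\psi}\Delta^{\perp}$ and then read off the equivalence from Proposition~\ref{PROPsing}. The only difference lies in how the auxiliary identity $(T_{\psi}\Delta^{\perp})^{\omega}=\vec{\Delta}_{\psi}$ is obtained: the paper proves the inclusion $T_{\psi}\Delta^{\perp}\subset(\vec{\Delta}_{\psi})^{\omega}$ from $dh^i=\iota_{\vec{h}^i}\omega$ and then concludes by a dimension count, whereas you determine the symplectic orthogonal directly by computing the annihilator of $T_{\psi}\Delta^{\perp}=\bigcap_i\ker(dh^i)_{\psi}$ inside $T^*_{\psi}(T^*M)$ and passing through $\omega$; this is marginally more explicit and has the small advantage of making visible why only the $m$ frame fields matter (the extra $\vec{h}^i$ differ by terms carrying a factor $h^j(\psi)=0$), a point the paper's dimension count leaves implicit.
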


\begin{proof}[Proof of Proposition \ref{PROPcharacteristic}]
First, we  note that  
$$
T_{\psi} \Delta^{\perp} = \bigl(\vec{\Delta} _\psi\bigr)^{\omega} \qquad \forall \,\psi \in \Delta^{\perp}.
$$
As a matter of fact, if $\xi \in T_{\psi}(T^*M)$ satisfies $\xi\cdot h^i (\psi)=0$ for some $i=1, \ldots,m,$ then $\omega_{\psi}(\xi,\vec{h}^i)=0$. This shows that $T_{\psi} \Delta^{\perp}$ is contained in the symplectic complement of $\vec{\Delta}_\psi $. Thus, since both spaces have the same dimension $2n-m$, they must coincide.

Thanks to this fact we deduce that $\left( T_{\psi} \Delta^{\perp} \right)^{\omega}=\vec{\Delta}_\psi$, hence 
 \begin{eqnarray}\label{eqcharac}
\mbox{ker} (\omega_{\vert \Delta^{\perp}})_{\psi} = \left( T_{\psi} \Delta^{\perp} \right)^{\omega} \cap \, T_{\psi} \Delta^{\perp} = \vec{\Delta}_\psi  \cap \, T_{\psi} \Delta^{\perp} \qquad \forall \,\psi \in \Delta^{\perp}.
\end{eqnarray}

Let us now prove that (i) $\Rightarrow$ (ii). By Proposition \ref{PROPsing}, if $\gamma : [0,1] \rightarrow M$  is a singular horizontal path with respect to $\Delta$, it is the projection of a curve $\psi:[0,1]\rightarrow \Delta^{\perp}$ which is horizontal with respect to $\vec{\Delta}$. Thus $\dot{\psi}(t)\in \vec{\Delta}_{\psi(t)}\cap T_{\psi(t)} \Delta^{\perp}$ for a.e. $t \in [0,1]$, and \eqref{eqcharac} yields the result.

To prove that (ii) $\Rightarrow$ (i) it suffices to observe that, thanks to (\ref{eqcharac}) and Proposition \ref{PROPsing}, any absolutely continuous curve $\psi:[0,1]\rightarrow \Delta^{\perp}$ with derivatives in $L^2$ and with $\dot{\psi}(t)\in \mbox{ker} (\omega_{\vert \Delta^{\perp}})_{\psi(t)}$ projects onto a curve which is horizontal and singular with respect to $\Delta$.
\end{proof}

Finally, we conclude this section with a uniform bound on the norm of the lift of singular horizontal paths. For this purpose, we equip $M$ with a Riemannian metric $g$, and denote the corresponding dual norm on $T^*M$ as $|\cdot |^*$ (namely, $|\psi|^*$ stands for the norm of $p$ at $x$, where $\psi=(x,p)$ and $p\in T_x^*M$).

\begin{proposition}\label{PROPpbounded}
Let $\mathcal{K}$ be a compact set in $M$ and $\ell>0$ be fixed. Then there is $K=K(\mathcal{K},\ell)>0$ such that, given a singular horizontal path $\gamma :[0,1] \rightarrow \mathcal{K}$ of length less than $\ell$, any lift $\psi:[0,1]\rightarrow \Delta^{\perp}$ given by Proposition \ref{PROPsing}(ii) or \ref{PROPcharacteristic}(ii) with $|\psi(0)|^*\leq 1$ satisfies 
$$
|\psi(t)|^* \leq K \qquad \forall \,t\in [0,1].
$$
\end{proposition}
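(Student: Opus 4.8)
The plan is to control the norm of the abnormal lift by a Gr\"onwall-type argument applied to the Hamiltonian system governing $\psi$. First I would work in local coordinates: since $\mathcal K$ is compact, cover it by finitely many coordinate charts on which $\Delta$ is generated by smooth vector fields $X^1,\dots,X^k$ with bounds on the fields and their first derivatives that are uniform over $\mathcal K$. Writing $\psi=(x,p)$ and using Proposition \ref{PROPsing}(ii), the lift $\psi$ is a horizontal path for $\vec\Delta$, so there exist controls $u_1(t),\dots,u_k(t)$ (the very same controls that represent $\gamma$, since $\pi(\psi)=\gamma$) with
\[
\dot\psi(t)=\sum_{i=1}^k u_i(t)\,\vec h^i(\psi(t))\qquad\text{for a.e. }t\in[0,1].
\]
In coordinates $\vec h^i$ has the form $(X^i(x),\,-p\cdot D_xX^i(x))$, so the $x$-component just reproduces $\dot\gamma$, while the $p$-component satisfies a \emph{linear} ODE in $p$ with coefficients of the form $\sum_i u_i(t)\,D_xX^i(\gamma(t))$.

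The key step is then the estimate
\[
|\dot p(t)|\ \le\ C_{\mathcal K}\,\Bigl(\sum_{i=1}^k|u_i(t)|\Bigr)\,|p(t)|\qquad\text{for a.e. }t,
\]
where $C_{\mathcal K}$ bounds $\sup_{\mathcal K}|D_xX^i|$ over the finitely many charts; here I am using that $\gamma$ stays in $\mathcal K$. By Gr\"onwall's inequality this yields
\[
|p(t)|\ \le\ |p(0)|\,\exp\!\Bigl(C_{\mathcal K}\!\int_0^1\sum_{i=1}^k|u_i(s)|\,ds\Bigr).
\]
To finish I need to bound $\int_0^1\sum_i|u_i|$ by something depending only on $\ell$. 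Since $\dot\gamma(t)=\sum_i u_i(t)X^i(\gamma(t))$ and the $X^i$ span $\Delta$, one can choose the controls (by selecting a minimal-norm representative, which does not change $\gamma$ or its singularity) so that $|u(t)|\le c_{\mathcal K}|\dot\gamma(t)|$ for a uniform constant $c_{\mathcal K}$; then $\int_0^1|u|\le \int_0^1 c_{\mathcal K}|\dot\gamma|\,dt\cdot\sqrt{?}$ — more precisely $\int_0^1\sum_i|u_i|\le \sqrt k\,c_{\mathcal K}\,\mathrm{length}^g(\gamma)\le \sqrt k\,c_{\mathcal K}\,\ell$ after passing through an arc-length reparametrization (which affects neither $|p(t)|$ pointwise nor the singularity, since both notions are reparametrization-invariant). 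Combining, one gets $|p(t)|^*\le |p(0)|^*\exp(\sqrt k\,c_{\mathcal K}C_{\mathcal K}\,\ell)=:K$, using $|\psi(0)|^*\le 1$; and the comparison between the coordinate norm and $|\cdot|^*$ costs only another uniform constant over $\mathcal K$.

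For the lift coming from Proposition \ref{PROPcharacteristic}(ii) rather than \ref{PROPsing}(ii), I would note that the proof of Proposition \ref{PROPcharacteristic} shows the two notions of lift coincide (the condition $\dot\psi\in\ker(\omega|_{\Delta^\perp})_\psi$ is equivalent to $\dot\psi\in\vec\Delta_\psi\cap T_\psi\Delta^\perp$, i.e.\ to being horizontal with respect to $\vec\Delta$), so no separate argument is needed. The main obstacle is purely a matter of bookkeeping: making the constants genuinely uniform across the finite atlas covering $\mathcal K$ and ensuring the choice of controls $u$ with $|u(t)|\lesssim|\dot\gamma(t)|$ is legitimate (it is, because one may always take the $u$ of minimal $L^2$-norm pointwise, a measurable selection, without altering the path or its abnormality). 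Once that is set up, the Gr\"onwall estimate and the length bound do the rest.
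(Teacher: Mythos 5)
Your proposal is correct and follows essentially the same route as the paper: cover $\mathcal K$ by finitely many charts, pick controls $u$ with $|u(t)|$ comparable to $|\dot\gamma(t)|$, observe that the $p$-component of the lift satisfies a linear ODE with coefficient of size $|u(t)|$, and apply Gr\"onwall together with the bound $\int_0^1|\dot\gamma|\le\ell$. The only cosmetic difference is that the paper runs Gr\"onwall on $\alpha(t)=1+|\psi(t)|^*$ rather than on $|p(t)|$ directly, and your worry about arc-length reparametrization is unnecessary since $\int_0^1|\dot\gamma|\,dt$ already equals the length for any absolutely continuous parametrization.
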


\begin{proof}[Proof of Proposition \ref{PROPpbounded}]
Let $\mathcal{K}$ be a compact set in $M$ and $\ell>0$ be fixed. For each $x\in \mathcal{K}$, there is an open neighborhood $U^x$ of $x$ and $m$ smooth vector fields $X^1_m, \ldots, X^m_x$ defined over a neighborhood $V^x$ of $\bar{U}^x$ such that 
$$
\Delta_y = \mbox{Span} \Bigl\{ X^1_x(y), \ldots, X^m_x(y)\Bigr\} \qquad \forall\,y \in U^x.
$$
By compactness of $\mathcal{K}$ there are $x_1, \ldots, x_N$ in $\mathcal{K}$ such that 
$$
\mathcal{K}\subset \bigcup_{i=1}^N U^{x_i}.
$$
Therefore if we multiply each $X_{x_i}$ by a cut-off function vanishing outside $V^{x_i}$, we can construct a family of smooth vector fields $X^1, \ldots, X^k$ (with $k=m\cdot N$) on $M$ which generate $\Delta$ over $\mathcal{K}$ such that for every $x\in \mathcal{K}$ and every $v\in \Delta_x \subset T_xM$ there is $u\in \R^k$ such that 
$$
v= \sum_{i=1}^k u_i X^i (x) \qquad \mbox{and} \qquad C_1 \, |v| \leq |u| \leq C_2 \, |v|,
$$
for some constants $C_1, C_2>0$ independent of $v$.  Let $\gamma :[0,1] \rightarrow \mathcal{K}$  be a singular horizontal path  of length less than $\ell$ and $\psi=(\gamma,p):[0,1]\rightarrow \Delta^{\perp}$  a lift of $\gamma$ (given by Proposition \ref{PROPsing}(ii) or \ref{PROPcharacteristic}(ii)) with $|\psi(0)|^*\leq 1$. There is $u\in L^2([0,1],\R^k)$ such that for a.e. $t\in [0,1]$, there holds
$$
\left\{
\begin{array}{rcl}
\dot{\gamma} (t) & = & \sum_{i=1}^k u_i(t) X^i(\gamma(t)) \\
\dot{p}(t) & = & - \sum_{i=1}^k u_i(t) \, p(t)\cdot d_{\gamma(t)} X^i
\end{array}
\right.
$$
and
$$
C_1 \, |\dot{\gamma}(t)| \leq |u(t)| \leq C_2 \, |\dot{\gamma}(t)|.
$$
Then, if we define $\alpha:[0,1] \rightarrow \R$ by $\alpha(t) := 1+|\psi(t)|^*$, there holds
$$
\dot{\alpha}(t) \leq C |u(t)| \, \alpha(t) \leq C |\dot{\gamma}(t)| \, \alpha(t).
$$
Thus, noticing that $\alpha(0)\leq 2$, it follows by Gronwall Lemma that 
$$
\alpha(t)\leq \alpha(0)\,e^{C\int_0^t|\dot\gamma|}\leq 2\,e^{C\ell}, 
$$
as desired.
\end{proof}

\section{Semianalytic curves}\label{App:semianalytic}

We recall the basic facts on semianalytic sets of dimension 1 that we need in this paper. For detailed presentations of semianalytic sets we refer the reader to  \cite{lojasiewicz}, \cite{BMihes}. 

Let $M$ be a real analytic manifold of dimension $n$. A subset $X$ of $M$ is semianalytic if each $y\in M$ has a neighborhood $U$ such that $X\cap U$
 is a finite union of sets of the form 
 $$
 \Bigl\{x\in U \, \vert \, f(x) = 0,  g_1 (x)  < 0, \ldots ,g_l(x)< 0\Bigr\},
 $$ 
 with $f,g_1,\ldots ,g_l :\to \R$ analytic.  Every semianalytic set admits a locally finite stratification into nonsingular strata which are locally closed analytic submanifolds of dimension $k \in \{0,\ldots,n\}$ and semianalytic sets. The dimension of a semianalytic set is defined as the maximum of the dimensions of its strata, and it coincides with its Hausdorff dimension.  

 \begin{definition} 
In this paper we call \emph{a semianalytic curve of $M$} any compact connected semianalytic subset of $M$ of Hausdorff dimension at most $1$.  
 \end{definition}

\begin{remark}\label{rk:semiref}
The image of a semianalytic curve by an  analytic map 
is again semianalytic. This follows for instance from \cite[Theorem 1, p. 92]{lojasiewicz}.  
(We note that this property fails for compact semianalytic sets of higher dimension.  
In this case  such images are not necessarily semianalytic, but they are subanalytic, cf. \cite{BMihes}.) 
\end{remark}

By the existence of a Whitney regular stratification, see \cite{lojasiewicz} or \cite{wall}, 
we have the following.:

\begin{lemma}[Regular stratification of $X$]\label{lem:Stratcurve}
Let $X$ be a semianalytic curve of $M$. Then there exists a stratification of $X$,
$$
X = X^0 \cup X^1,
$$
that satisfies the following properties:  $X^0$ is finite and $X^1$ is a finite union of 1-dimensional strata.  
Every 1-dimensional stratum $\Pi$ is  a connected locally closed analytic submanifold of $M$ and a semianalytic set, and, moreover, its closure $\overline \Pi$
in $M$ is a $C^ 1$ submanifold with boundary. 
\end{lemma}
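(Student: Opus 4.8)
The plan is to obtain the desired stratification in two stages: first produce a finite semianalytic stratification with the correct combinatorial shape by invoking the existence of Whitney regular stratifications, and then refine it so that the closure of each $1$-dimensional stratum is a $C^1$ submanifold with boundary. For the first stage I would take a Whitney regular semianalytic stratification of $X$ satisfying the condition of the frontier (which exists by \cite{lojasiewicz} or \cite{wall}) and refine it so that every stratum is connected. Since the Hausdorff dimension of $X$ is at most $1$, every stratum is then a connected, locally closed, analytic submanifold of $M$ of dimension $0$ or $1$, and a semianalytic set; and since $X$ is compact and the stratification locally finite, there are only finitely many strata. Setting $X^0$ to be the union of the $0$-dimensional strata and $X^1$ the union of the $1$-dimensional ones already yields a decomposition with all the stated properties except possibly the last one (that $\overline{\Pi}$ is a $C^1$ submanifold with boundary), and by the condition of the frontier we have $\overline{\Pi}\setminus\Pi\subset X^0$, a finite set, for every $1$-dimensional stratum $\Pi$.

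For the second stage I would use the local structure of semianalytic curves. Near each point $p\in X^0$ the germ of $X$ is, by the local conic structure of semianalytic sets (see \cite{lojasiewicz}, \cite{BMihes}), the union of $p$ and finitely many \emph{half-branches}, each a $1$-dimensional semianalytic arc admitting, by the Puiseux parametrization theorem for semianalytic curves (\cite{lojasiewicz}; cf.\ \cite{pawlucki} and Remark \ref{rk:LocalTrivialityPuiseux}), an injective real-analytic parametrization $\xi:[0,\varepsilon)\to M$ with $\xi(0)=p$. Subdividing the $1$-dimensional strata at finitely many further interior points, which I add to $X^0$ (a move compatible with Whitney regularity and the condition of the frontier, so the properties of the first stage are preserved), I would arrange that each $1$-dimensional stratum $\Pi$ meets a small neighborhood of each of its frontier points in $\overline{\Pi}$ along a single such half-branch; this is possible because $X$ is compact and only finitely many half-branches occur near each point of $X^0$.

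Finally, for the $C^1$ claim: writing such a parametrization in a local analytic chart centered at $p$ as $\xi(t)=t^{m}v(t)$ with $v$ real-analytic, $v(0)\ne 0$, and $m\ge 1$ the order of vanishing of $\xi$ at $0$, one computes $\dot\xi(t)=t^{m-1}\bigl(m\,v(t)+t\,\dot v(t)\bigr)$, so the unit tangent $\dot\xi(t)/|\dot\xi(t)|$ converges to $v(0)/|v(0)|$ as $t\to 0^+$, while the arclength satisfies $\sigma(t)\sim|v(0)|\,t^{m}$; reparametrizing the half-branch by arclength then produces a $C^1$ embedding of an interval $[0,\ell)$ onto the half-branch together with its endpoint $p$, with nonvanishing derivative at $0$. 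Gluing these parametrizations at the two ends of $\Pi$, and using that $\Pi$ is an embedded analytic arc (or analytic circle) away from $X^0$, shows that $\overline{\Pi}$ is a $C^1$ submanifold of $M$ with boundary contained in $\overline{\Pi}\setminus\Pi$ (and empty boundary precisely when $\Pi$ is already closed in $M$). The main obstacle is this last point: one must go beyond mere continuity of the Puiseux parametrization and establish convergence of the tangent direction — this is essentially Whitney's condition (a) between $\Pi$ and the boundary point, and it is what forces the preliminary subdivision so that each frontier point is approached along a single half-branch, thereby ruling out corners or self-tangencies that would obstruct $\overline{\Pi}$ from being a manifold with boundary.
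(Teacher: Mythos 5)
Your proof is correct, and it follows the same route the paper intends while filling in the detail the paper outsources to its references: the paper simply asserts the lemma ``by the existence of a Whitney regular stratification, see \cite{lojasiewicz} or \cite{wall},'' and records the local description of $\overline\Pi$ (the $C^1$ graph/Puiseux form) separately in Remark \ref{rk:LocalPuiseux} by citing \cite[Proposition 2]{pawlucki} and \cite{wall}. You invoke the same sources only for stage one (existence of a Whitney regular, frontier-condition, connected-stratum decomposition with finitely many strata, the finiteness following from compactness and local finiteness), and then give a hands-on derivation of the $C^1$ closure: write each half-branch as a Puiseux arc $\xi(t)=t^m v(t)$ in an analytic chart with $v(0)\ne 0$, observe that $\dot\xi(t)=t^{m-1}\bigl(mv(t)+t\dot v(t)\bigr)$ so the unit tangent converges to $v(0)/|v(0)|$, note $\sigma(t)\sim |v(0)| t^m$, and reparametrize by arclength to get a $C^1$ half-open embedding with nonvanishing derivative at the endpoint. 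This computation is sound, and the preliminary subdivision — ensuring each $1$-dimensional stratum approaches each of its frontier points along a single half-branch, which precludes a corner at a frontier point reached from both ends of a loop-like stratum — is exactly what is needed to make the manifold-with-boundary conclusion hold. What you gain is a self-contained proof of the $C^1$ property that the paper cites as known; you lose nothing except brevity.

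One small inaccuracy worth flagging: for a pair $(\Pi,\{p\})$ with $p$ a $0$-dimensional stratum, Whitney condition (a) is vacuous (the tangent space at $p$ to $\{p\}$ is zero, so the containment is automatic), and even condition (b) does not by itself deliver \emph{existence} of the limit tangent direction along $\Pi$ as one approaches $p$. That existence is a genuinely stronger fact about $1$-dimensional semianalytic sets, which is precisely what the Puiseux parametrization in your argument provides. So your parenthetical ``this is essentially Whitney's condition (a)'' mislabels the mechanism, but since your proof does not actually rely on Whitney (a) for this step, the slip is cosmetic and does not affect correctness.
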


Similarly to Remark \ref{rk:LocalTrivialityPuiseux}, we have the following strengthening of the above property.

\begin{remark}\label{rk:LocalPuiseux}\hfill
\begin{itemize}
\item[(i)]
By \cite[Proposition 2]{pawlucki} or  \cite{wall} (proof of Proposition p.342), any stratification as in Lemma \ref{lem:Stratcurve} satisfies, in addition, the following property:   For every 1-dimensional stratum $\Pi$ and every  $p\in \overline \Pi \setminus \Pi$ there exist a positive integer $k$ and  a local system of analytic coordinates 
$(x_1,x')$, $x'=(x_2, \ldots , x_{n})$, at $p$ such that $\overline \Pi$ is given by the graph $\{x'= \varphi (x_1)\}$, defined locally on $\{x_1\ge 0 \}$, where $\varphi $ is $C^1$ and the mapping $t\mapsto \varphi  (t^k)$ is analytic.  This implies in particular that  $\varphi$ is of class $C^{1,1/k}$ (see Remark \ref{rk:LocalTrivialityPuiseux} for a proof of the latter property). 
\item[(ii)] 
It follows by the above results that every semianalytic curve admits a continuous piecewise analytic parameterization $\gamma : [0,1] \to M$. In other words, there exists a finite set 
$0=t_0 < t_1 \cdots <t_N =1$ such that $\gamma$ restricted to each subinterval 
$[t_i, t_{i+1}]$ is analytic (i.e. extends analytically through the endpoints), and the endpoints are the only possible critical points of such restriction.  
\end{itemize}
\end{remark}





\section{Hardy fields}\label{APPHardy}

\emph{A Hardy field} $\mathcal{F}$ is a field of germs at $+\infty $ of functions from $\R$ to $\R$ (that is, for each $f\in \mathcal{F}$, there exists $\bar{z}_f \in \mathbb{R}$ such that $f:(\bar{z}_f,\infty)\to \mathbb{R}$ is well-defined) which is closed under differentiation. Since every non-zero function in $\mathcal{F}$ admits an inverse (in $\mathcal{F}$), any element of a Hardy field is eventually either strictly positive, strictly negative, or zero. Therefore, since the field is closed under differentiation, it is well-known that:

\begin{lemma}\label{lem:HardyField}
Let $f:(\bar{z},\infty)\to \mathbb{R}$ be a function in a Hardy field. Then there exists $\bar{w} \in (\bar{z},\infty)$, such that the restriction $f|_{(\bar{w},\infty)}:(\bar{w},\infty) \to \mathbb{R}$ is either strictly monotone or constant.
\end{lemma}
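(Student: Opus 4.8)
The statement to prove is Lemma~\ref{lem:HardyField}: a function $f$ in a Hardy field is eventually strictly monotone or constant. The plan is to argue by induction on the order of vanishing of successive derivatives, exploiting the two defining closure properties of a Hardy field --- stability under differentiation and the fact that it is a \emph{field} (so every nonzero element is eventually invertible, hence eventually of constant sign).

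First I would record the elementary but crucial \emph{sign trichotomy}: if $h \in \mathcal{F}$ is not the zero germ, then $1/h \in \mathcal{F}$ is a well-defined germ at $+\infty$, so $h$ is nowhere zero on some interval $(\bar w,\infty)$; being continuous there, it has constant sign on $(\bar w,\infty)$. Thus every element of $\mathcal{F}$ is eventually $>0$, eventually $<0$, or eventually $\equiv 0$. Next I would apply this to $f' \in \mathcal{F}$ (which lies in $\mathcal{F}$ because $\mathcal{F}$ is closed under differentiation). Three cases arise: if $f' \equiv 0$ near $+\infty$ then $f$ is eventually constant; if $f' > 0$ near $+\infty$ then $f$ is eventually strictly increasing; if $f' < 0$ near $+\infty$ then $f$ is eventually strictly decreasing. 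In all three cases the conclusion of the lemma holds.

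That is essentially the whole argument --- the lemma is in fact an immediate consequence of the two axioms --- so there is no serious obstacle here; the only point requiring a word of care is that ``strictly monotone'' must be read on a possibly smaller interval $(\bar w,\infty)$, and that one uses the mean value theorem to pass from the sign of $f'$ on $(\bar w,\infty)$ to strict monotonicity of $f$ on that same interval. I would phrase the statement accordingly, exactly as it is already written in the excerpt (``there exists $\bar w \in (\bar z,\infty)$ such that $f|_{(\bar w,\infty)}$ is either strictly monotone or constant''). If one wanted the stronger classical fact that $f$ itself tends to a limit in $\mathbb{R}\cup\{\pm\infty\}$, one would iterate: apply the trichotomy to $f'$, then (in the monotone case) to $f$ to get the limit --- but that stronger statement is not needed here, so I would not pursue it.

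In summary, the proof is: (1) from the field axiom, every nonzero germ in $\mathcal{F}$ is eventually of constant (nonzero) sign; (2) $f' \in \mathcal{F}$ by closure under differentiation; (3) apply (1) to $f'$ and invoke the mean value theorem on the interval where $f'$ has constant sign (or is identically zero). The expected ``main obstacle'' is really just bookkeeping of the interval on which monotonicity holds; there is no analytic difficulty.
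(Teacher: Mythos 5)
Your proof is correct and takes exactly the same route as the paper, which likewise derives the lemma from the sign trichotomy for nonzero germs (a consequence of invertibility in the field) applied to $f' \in \mathcal{F}$; the paper simply states this as "well-known" rather than spelling out the mean-value-theorem step. Nothing to add.
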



Following a work of Ilyashenko,  Speissegger constructs in \cite{speiss2} a Hardy field $\mathcal F$ that contains, via composition with $-\log t$, all transition maps of hyperbolic singularities (i.e. saddles) of planar analytic vector fields, or equivalently all such transition maps composed with $\exp (-x)$ belong to $\mathcal F$.  In particular, it follows that all algebraic combinations, sums and products, of such transition maps are monotone.  We use this result in the proof of Lemma \ref{lem:lengthSaddle}.

\section{Resolution of singularities}\label{app:ResolutionOfSingularities}

In what follows, $M$ is a real-analytic manifold and we denote by $\mathcal{O}_M$ the sheaf of analytic functions over $M$. Given a point  $\bar{z} \in M$, we denote by $\mathcal{O}_{M,\bar{z}}$ the analytic function germs at $\bar{z}$ and by $m_{\bar{z}}$ the maximal ideal of $\mathcal{O}_{M,\bar{z}}$. Given an ideal sheaf $\mathcal{I}$ of $\mathcal{O}_M$ and a point $\bar{z}\in M$, the order of $\mathcal{I}$ at $\bar{z}$ is defined by
\[
\sup\{r\in \mathbb{N}\,\vert \, \mathcal{I} \cdot \mathcal{O}_{M,\bar{z}} \subset m_{\bar{z}}^r \}
\]
The \emph{zero set} of $\mathcal{I}$, which we denote by $V(\mathcal{I})$, is the set of points where the order of $\mathcal{I}$ is at least one.

\subsection{Blowings-up}\label{SSEC:SNC}


\paragraph{Blowing-up of $\mathbb{R}^n$.} We start by briefly recalling the definition of blowings-up over $\mathbb{R}^n$ (see \cite[Sections 8B and 8C]{IY} or \cite[Section 3.1]{ADL} for a more complete discussion). Let us fix a coordinate system $(x_1, \ldots, x_n)$ of $\mathbb{R}^n$ and we consider a sub-manifold $\mathcal{C} = \{ x_1 =0,\ldots, x_t=0\}$ for some $1\leq t\leq n$. Let $\mathbb{P}^{t-1}$ be the real projective space of dimension $t-1$ with homogenous coordinates $(y_1,\ldots, y_t)$. We consider the set $\widetilde{M} \subset \mathbb{R}^{n} \times \mathbb{P}^{t-1}$ given by:
\[
\widetilde{M} = \left\{(x,y) \in \mathbb{R}^{n} \times \mathbb{P}^{t-1}\,\vert \, x_i y_j = x_j y_i, \, \forall\,i=1,\ldots,n,\,j=1,\ldots,t \right\}.
\]
Note that $\widetilde{M}$ is an analytic manifold. The restriction of the projection map $\tau: \mathbb{R}^{n} \times \mathbb{P}^{t-1} \to \mathbb{R}^{n}$ to $\widetilde{M}$, which we denote by $\sigma:\widetilde{M} \to \mathbb{R}^{n}$, is called a \emph{blowing-up} (with center $\mathcal{C} = \{ x_1 =0,\ldots, x_t=0\}$). The set $F = \sigma^{-1}(\mathcal{C})$ is said to be the \emph{exceptional divisor} of $\sigma$. Note that $\sigma$ is a diffeomorphism from $\widetilde{M} \setminus F$ into its image $\mathbb{R}^n \setminus \mathcal{C}$.

\paragraph{Blowings-up in general manifolds.} It is well-known that the definition of blowing-up can be extended to general analytic manifolds. This means that, given a nonsingular analytic irreducible submanifold $\mathcal{C}$ of $M$, there exists a proper analytic map $\sigma: \widetilde{M} \to M$ such that, at every point $\bar{z} \in \mathcal{C}$, the map $\sigma$ locally coincide with the model of the previous paragraph. The sub-manifold $\mathcal{C}$ is said to be the \emph{center} of the blowing-up $\sigma$. For a precise definition and further details about blowings-up, we refer the reader to \cite[Section II.7]{hart}.

\paragraph{Simple normal crossing divisors.} A smooth divisor $F$ over $M$ is a nonsingular and connected
analytic hypersurface of $M$. We denote by $\mathcal{I}_F$ the reduced and coherent ideal sheaf of $\mathcal{O}_M$ whose zero set is $F$. Note that, at each point $\bar{z} \in F$, there exists a coordinate system $x= (x_1,\ldots,x_n)$ centered at $\bar{z}$ such that, locally, $\mathcal{I}_F = (x_1)$. 

A \emph{simple normal crossing divisor} over $M$, which we call \emph{SNC divisor} for short, is a set $E$ which is a union of smooth divisors and, at each point $\bar{z} \in E$, there exists a coordinate system $x= (x_1,\ldots,x_n)$ centered at $\bar{z}$ such that, locally, $E = (x_1 \cdots x_l=0)$ for some $1 \leq l\leq n$. 

\begin{remark}
In the literature, a SNC divisor $E$ is a finite union of divisors. Here, we allow $E$ to have countable number of divisors in order to simplify the notation used for a resolution of singularities in the analytic category (c.f. Definition \ref{def:SeqAdmBlowings-up} below).  Indeed, in the analytic category it is usual to present resolution of singularities in term of relatively compact sets $U\subset M$; note that the restriction $E \cap U$ is a finite union of divisors.
\end{remark}

Given a blowing-up $\sigma: \widetilde{M} \to M$ with center $\mathcal{C}$, we note that the pre-image of $\mathcal{C}$ is a divisor, which we call the \emph{exceptional divisor} associated to $\sigma$.

\paragraph{Admissible blowings-up.} Consider a SNC divisor $E$ over $M$. A blowing-up $\sigma: \widetilde{M} \to M$ is said to be \emph{admissible} (in respect to $E$) if the center of blowing-up $\mathcal{C}$ has normal crossings with $E$, that is, at each point $\bar{z} \in \mathcal{C}$, there exists a coordinate system $x=(x_1,\ldots,x_n)$ centered at $\bar{z}$ and a sub-list $(i_1,\dots, i_t)$ of $(1,\ldots, n)$ such that, locally, $E = (x_1 \cdots x_l =0)$ and $\mathcal{C} = (x_{i_1}=0,\ldots,x_{i_t}=0)$. Now, consider the set $\widetilde{E}$ given by the union of the pre-image (under $\sigma$) of $E$ with the exceptional divisor $F$ (of $\sigma$); if the blowing-up is admissible, it is not difficult to see that $\widetilde{E}$ is a SNC divisor. From now on, we denote an admissible blowing-up by $\sigma: (\widetilde{M},\widetilde{E}) \to (M,E)$.

\paragraph{Sequence of admissible blowings-up.} A \emph{finite sequence of admissible blowings-up} is given by
\[
(M_r,E_r) \xrightarrow{\sigma_r} \ldots  \xrightarrow{\sigma_2} (M_1,E_1)\xrightarrow{\sigma_1} (M_0,E_0)
\]
where each successive blowing-up $\sigma_{i+1}: (M_{i+1},E_{i+1} )\to (M_i,E_i)$ is admissible (in respect to the exceptional divisor $E_i$). More generally, we abuse notation and we consider:

\begin{definition}\label{def:SeqAdmBlowings-up}
A \emph{sequence of admissible blowings-up} is a proper analytic morphism
\[
\sigma: (\widetilde{M},\widetilde{E}) \to (M,E)
\]
which is \emph{locally} a finite composition of admissible blowings-up. In other words, for each relatively compact $U \subset M$, the restricted morphism $\sigma|_{\sigma^{-1}(U)}$ is a finite sequence of admissible blowings-up.
\end{definition}

\subsection{Resolution of singularities of an analytic hypersurface}

Let $M$ be an analytic manifold and $E$ a SNC divisor. Consider an analytic (space) hypersurface $\Sigma \subset M$, and denote by $\mathcal{I}$ the (principal) reduced and coherent ideal sheaf whose zero set is $\Sigma$. The \emph{singular set} of $\Sigma$ (as an analytic space), which we denote by $Sing(\Sigma)$, is the set of points $\bar{z}\in M$ over which $\mathcal{I}$ has order at least two.

Given an admissible blowing-up $\sigma: (\widetilde{M},\widetilde{E}) \to (M,E)$ with center $\mathcal{C} \subset M$ and exceptional divisor $F$, we denote by $\sigma^{\ast}(\mathcal{I})$ the total transform of $\mathcal{I}$ (that is, the ideal sheaf which is generated by germs $f \circ \sigma$, where $f$ is a germ belonging to $\mathcal{I}$). We define the \emph{strict transform} $\widetilde{\mathcal{I}}$ of $\mathcal{I}$ by
\[
\widetilde{\mathcal{I}} =  \mathcal{I}_F^{-r} \cdot \sigma^{\ast}(\mathcal{I}),
\]
where $r$ is the maximal natural number such that $\widetilde{\mathcal{I}}$ is well-defined. The strict transform $\widetilde{\Sigma}$ of $\Sigma$ is the zero set of $\widetilde{\mathcal{I}}$. Note that we can extend the definition of strict transform to sequences of admissible blowings-up in a trivial way. 

Roughly, a resolution of singularities of $\Sigma$ is a sequence of admissible blowings-up $\sigma:(\widetilde{M},\widetilde{E}) \to (M,E)$, which is an isomorphism outside of $Sing(\Sigma)$, such that $\widetilde{\Sigma} \subset \widetilde{M}$ is an analytic smooth hypersurface which is transverse to the divisor $\widetilde{E}$: this means that, at every point $x\in \widetilde{E}$, there exists a coordinate system $(x_1,\ldots, x_n)$ centered at $x$ so that locally $\widetilde{E} =\{x_1 \cdots x_l =0\}$ and $\widetilde{\Sigma} = \{x_n=0\}$. This last condition guarantees that $\widetilde{E}|_{\widetilde{\Sigma}}$ is also a SNC divisor, and we may consider the pair $(\widetilde{\Sigma},\widetilde{E}|_{\widetilde{\Sigma}})$.



The classical Theorem of Hironaka adapted to the real-analytic setting (see e.g. \cite{bm97,w05}) and specialized to hypersurfaces, yields the following enunciate:

\begin{theorem}[Resolution of Singularities]\label{thm:RShypersurfaces}
Let $M$ be a real-analytic manifold, $E$ be a SNC crossing divisor over $M$. and $\Sigma \subset M$ be a reduced and coherent analytic (space) hypersurface of $M$. Then there exists a resolution of singularities of $\Sigma$. In other words, there exists a proper analytic morphism
\[
\sigma: (\widetilde{M},\widetilde{E}) \to (M,E)
\]
which is a sequence of admissible blowings-up (see Definition \ref{def:SeqAdmBlowings-up}) such that the strict transform of $\widetilde{\Sigma}$ of $\Sigma$ is smooth and transverse to $\widetilde{E}$ and the restriction of $\sigma$ to $\widetilde{M}\setminus \widetilde{E}$ is diffeomorphism onto its image $M\setminus Sing(\Sigma)$. 
\end{theorem}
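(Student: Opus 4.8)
The plan is to obtain Theorem~\ref{thm:RShypersurfaces} as a specialization of the canonical embedded resolution of singularities in the real-analytic category (Bierstone--Milman \cite{bm97}, see also \cite{w05}) to the case of a principal ideal sheaf, and then to globalize the finite towers of blowings-up produced by that algorithm into a locally finite sequence of admissible blowings-up in the sense of Definition~\ref{def:SeqAdmBlowings-up}.

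First I would work over a relatively compact open subset $U \subset M$ and feed the resolution algorithm with the reduced coherent principal ideal sheaf $\mathcal{I}|_U$ cutting out $\Sigma \cap U$, together with the SNC divisor $E|_U$ regarded as the initial (``old'') exceptional divisor. The algorithm returns a finite composition of blowings-up $\sigma_U : (\widetilde M_U, \widetilde E_U) \to (U, E|_U)$ in which each successive center is a smooth analytic submanifold having normal crossings with the currently accumulated exceptional divisor; hence every blowing-up is admissible and $\widetilde E_U$ --- which by definition records both the total transform of $E$ and all the new exceptional components --- is again an SNC divisor. Because $\mathcal{I}$ is principal, its strict transform stays principal, so $\widetilde\Sigma_U$ is still a hypersurface, and by the output guarantees of the algorithm $\widetilde\Sigma_U$ is smooth with $\widetilde\Sigma_U \cup \widetilde E_U$ a simple normal crossing divisor; in particular $\widetilde\Sigma_U$ is transverse to $\widetilde E_U$ in the sense of the statement. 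Finally, all the centers are chosen inside the closed analytic locus where the relevant resolution invariant is maximal; this locus sits above $Sing(\Sigma\cap U)$ (and, when $E$ is nonempty, above the locus where $\Sigma \cup E$ fails to be a transverse union of smooth pieces), so $\sigma_U$ is an isomorphism away from the exceptional divisor, which --- in the applications of the paper, where one starts from $E = \emptyset$ --- gives exactly that the restriction of $\sigma_U$ to $\widetilde M_U \setminus \widetilde E_U$ is a diffeomorphism onto $U \setminus Sing(\Sigma\cap U)$.

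The second step is the globalization. The decisive feature I would invoke is that the Bierstone--Milman (and \cite{w05}) resolution is canonical, i.e.\ functorial with respect to open immersions and local analytic isomorphisms. Hence for two overlapping relatively compact charts the resolutions produced over each restrict to canonically isomorphic resolutions over the overlap, and they patch to a single proper analytic morphism $\sigma : (\widetilde M, \widetilde E) \to (M, E)$ whose restriction over the preimage of every relatively compact open set is a finite sequence of admissible blowings-up --- that is, a sequence of admissible blowings-up as in Definition~\ref{def:SeqAdmBlowings-up}. All three conclusions of the theorem (smoothness of $\widetilde\Sigma$, transversality of $\widetilde\Sigma$ to $\widetilde E$, and $\sigma$ restricting to a diffeomorphism $\widetilde M \setminus \widetilde E \to M \setminus Sing(\Sigma)$) are local statements, so they persist after gluing.

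I expect the one genuinely delicate point to be this globalization: Hironaka's original existence theorem is compact/local in character and does not by itself provide compatible local resolutions, so one must use one of the modern \emph{canonical} versions in order to guarantee that the locally defined finite towers agree on overlaps and assemble into a locally finite global morphism. Everything else --- the reduction to the principal case, identifying ``principalization of $\mathcal{I}$ together with normal crossings with the old divisor'' with the precise formulation of a resolution of singularities recalled just before the statement, and the verification that admissibility is preserved throughout --- is routine bookkeeping.
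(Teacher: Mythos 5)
The paper does not actually prove Theorem~\ref{thm:RShypersurfaces}; it presents it as a known specialization of canonical (functorial) resolution of singularities in the real-analytic category, citing \cite{bm97} and \cite{w05}. Your argument --- applying the canonical algorithm over relatively compact charts and then gluing the finite local towers via functoriality under open immersions, with the correct observation that canonicity (rather than bare existence as in Hironaka's original form) is precisely what makes the local resolutions compatible and hence globalizable to a locally finite sequence of admissible blowings-up in the sense of Definition~\ref{def:SeqAdmBlowings-up} --- is exactly the right elaboration of that citation, and it is correct.
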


\subsection{Log-rank and Hsiang-Pati coordinates}\label{app:HP}

In the proof of Lemma \ref{lem:lengthSaddle}, it is important to control the pulled-back metric after resolution of singularities. In order to do so, we have used the notion of \emph{Hsiang-Pati coordinates}, which follows from the original ideas of \cite{HPoriginal}, that we recall below.

We start by general considerations valid for any analytic map. Consider an analytic map $\pi : X \to Y$, where $X$ denotes a nonsingular real analytic surface (so, $X$ is 2-dimensional) with simple normal crossings divisor $E$, and $Y$ denotes a real-analytic manifold of dimension $N\geq 2$. Given a point $\bar{z}\in E$:
\begin{itemize}
\item We say that $\bar{z}$ is a $1$-point if there exists only one irreducible component of the divisor $E$ at $\bar{z}$. In this case, there exists a coordinate system $(u,v)$ centered at $\bar{z}$ such that $E = \{u=0\}$.
\item We say that $\bar{z}$ is a $2$-point if there exist two irreducible components of the divisor $E$ at $\bar{z}$. In this case, there exists a coordinate system $\pmb{u}= (u_1,u_2)$ centered at $\bar{z}$ such that $E = \{u_1\cdot u_2=0\}$.
\end{itemize}

For each point $\bar{z}\in X$, we define the \emph{logarithmic rank} of $\pi$ at $\bar{z}$ (in terms of a locally defined coordinate system at $\bar{z}$ and $\bar{w}=\pi(\bar{z})$) by
\[
\begin{aligned}
\mbox{log rk}_{\bar{z}}\pi &= \mbox{rk}_{\bar{z}}(\mbox{Jac}(\pi)) & & \qquad \text{if }\bar{z} \notin E\\
\mbox{log rk}_{\bar{z}}\pi &= \mbox{rk}_{\bar{z}} (\mbox{log Jac}(\pi)) = \mbox{rk}\left(
\begin{matrix}
u \displaystyle{\frac{\partial \pi_1}{\partial u}}  & \ldots & u \displaystyle{\frac{\partial \pi_N}{\partial u}} \\
\displaystyle{\frac{\partial \pi_1}{\partial v}}   & \ldots & \displaystyle{\frac{\partial \pi_N}{\partial v}}
\end{matrix}\right)  & & \qquad \text{if $\bar{z}$ is a $1$-point}\\
\mbox{log rk}_{\bar{z}}\pi &= \mbox{rk}_{\bar{z}} (\mbox{log Jac}(\pi)) = \mbox{rk} \left(
\begin{matrix}
u_1 \displaystyle{\frac{\partial \pi_1}{\partial u_1}}  & \ldots & u_1 \displaystyle{\frac{\partial \pi_N}{\partial u_1}} \\
u_2 \displaystyle{\frac{\partial \pi_1}{\partial u_2}}  & \ldots & u_2 \displaystyle{\frac{\partial \pi_N}{\partial u_2}}
\end{matrix}\right) & & \qquad \text{if $\bar{z}$ is a $2$-point}
\end{aligned}
\]

For more detail about the logarithmic rank, we refer to \cite[Section 2.1]{HP}. 

\begin{remark}\label{rk:LogRank}
Let $\Sigma_0$ denote the set of points $\bar{z} \in X$ such that $\mbox{log rk}_{\bar{z}}\pi =0$. If $\pi:X \to Y$ is a proper map, then $\pi(\Sigma_0)$ is a locally finite set of points in $Y$ (c.f. \cite[Section 3.2]{HP}).
\end{remark}

We say that $\pi:X \to Y$ has \emph{Hsiang-Pati coordinates} (with respect to $E$) if $\pi$ has maximal rank outside of $E$ and, for every point $\bar{z} \in E$ there exist coordinate systems $(u,v)$ (respectively $\pmb{u}=(u_1,u_2)$ if $\bar z$ is a 2-point) centered at $\bar{z}$, and $(\pi_1,\ldots, \pi_N)$ centered at $\bar{w}=\pi(\bar{z})$, such that:
\begin{align}
\pi_1 = v,& \quad \pi_2= u^{\beta}, && \quad \pi_i = h_i(u,v), && \quad \text{if $\mbox{log rk}_{\bar{z}}\pi=1$,}\label{eq:HP1}\\
\pi_1 = u^{\alpha},& \quad \pi_2= g_2(u) + u^{\beta}v,&& \quad \pi_i = g_i(u)+h_i(u,v), && \quad \text{if $\mbox{log rk}_{\bar{z}}\pi=0$,}\label{eq:HP11}\\
\pi_1 = \pmb{u}^{\alpha},& \quad \pi_2= g_2(\pmb{u}) + \pmb{u}^{\beta},&& \quad \pi_i = g_i(\pmb{u}) +h_i(\pmb{u}), && \quad \text{if $\bar{z}$ is a $2$-point,}
\label{eq:HP12}
\end{align}
where $d\pi_1\wedge d g_i(\cdot) \equiv 0$, $d\pi_1 \wedge d\pi_2 \not\equiv 0$, $\pi_1$ divides $g_i$, $u^{\beta}$ (or $\pmb{u}^{\beta}$) divides $h_i$, for each $i=2,\ldots, N$, and $u^{\alpha}$ divides $u^{\beta}$ (respectively $\pmb{u}^{\alpha}$ divides $\pmb{u}^{\beta}$). We now recall the main result of \cite{HP} (which strenghten \cite{HPoriginal}), specialized to the simpler case that $\mbox{dim} \,X =2$ and $\mbox{dim}\, Y = 3$.

\begin{theorem}[Hsiang-Pati coordinates]\label{prop:HP}
With the notation of Theorem \ref{thm:RShypersurfaces}, suppose in addition that $\mbox{dim} \,M=3$ (and, therefore, $\Sigma$ is a surface). Then, up to composing with further blowings-up, we can suppose that the resolution of singularities $\pi = \sigma|_{\widetilde{\Sigma}}: (\widetilde{\Sigma},\widetilde{E}|_{\widetilde{\Sigma}}) \to (\Sigma,Sing(\Sigma))$ has Hsiang-Pati coordinates (with  respect to $E$). Furthermore, the property of Hsiang-Pati coordinates is preserved by composing $\pi$ with a finite sequence of blowing-up of one of the following two forms:
\begin{itemize}
\item[(i)] A blowing-up with center $\bar{z}$, where $\mbox{log rk}_{\bar{z}}\pi =0$.
\item[(ii)] The principalization (of the pullback) of the maximal ideal $m_{\bar{w}}$, where $\bar{w} \in \Sigma$.
\end{itemize}
\end{theorem}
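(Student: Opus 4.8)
The plan is to derive the theorem from the main result of \cite{HP} in the special case $\dim M = 3$ (so $\Sigma$ is a surface and $\widetilde\Sigma$ a nonsingular analytic surface with SNC divisor $\widetilde E|_{\widetilde\Sigma}$), and then to verify by direct local computation that the two further classes of blowings-up in (i) and (ii) preserve the Hsiang--Pati normal forms \eqref{eq:HP1}--\eqref{eq:HP12}. For the first assertion I would simply recall that, by \cite[Theorem 1.3]{HP} (building on \cite{HPoriginal}), after composing the resolution $\sigma$ of Theorem \ref{thm:RShypersurfaces} with a further admissible sequence of blowings-up one may assume that at every point of $\widetilde E|_{\widetilde\Sigma}$ the map $\pi = \sigma|_{\widetilde\Sigma}$ acquires one of the normal forms \eqref{eq:HP1}, \eqref{eq:HP11}, \eqref{eq:HP12}, according to whether the point is a $1$-point or a $2$-point and has logarithmic rank $1$ or $0$; nothing beyond extracting the two-dimensional-source statement from \cite{HP} is needed here.

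The substance is the stability claim. For (i), I would place myself at a point $\bar z$ with $\mbox{log rk}_{\bar z}\pi = 0$, write $\pi$ in the corresponding normal form \eqref{eq:HP11} (if $\bar z$ is a $1$-point) or \eqref{eq:HP12} (if $\bar z$ is a $2$-point), blow up $\bar z$, and pass to each of the standard charts of the blowing-up. In each chart I would compute the pullback of $(\pi_1,\pi_2,\pi_3)$, factor out the largest power of the new exceptional component, and check that the resulting expression again has its first two coordinates forming a monomial pair with the third coordinate subordinate to it in the precise sense of \eqref{eq:HP1}--\eqref{eq:HP12}. Concretely one tracks how the exponent vectors $\alpha,\beta\in\mathbb N$ or $\mathbb N^2$ transform (they are replaced, in each chart, by suitable sums of their entries), verifies that the divisibility relations ``$\pi_1\mid g_i$'' and ``$u^\beta\mid h_i$'' (or their $2$-point analogues) persist, and checks that the logarithmic rank either remains $0$ or jumps to $1$ but never exceeds $1$ --- which is exactly what guarantees that one of the three forms continues to apply at every new point.

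For (ii), I would first note that in dimension two the principalization of an ideal sheaf is a finite composition of point blowings-up, and that the centers occurring in the principalization of $\pi^{\ast} m_{\bar w}$ necessarily lie over $\bar w$; if $\bar w\notin \mathrm{Sing}(\Sigma)$ then $\pi$ is an isomorphism near $\pi^{-1}(\bar w)\not\subset\widetilde E|_{\widetilde\Sigma}$ and there is nothing to prove, while if $\bar w\in \mathrm{Sing}(\Sigma)$ then $\pi^{-1}(\bar w)\subset \widetilde E|_{\widetilde\Sigma}$, so every such center sits at a point carrying one of the normal forms \eqref{eq:HP1}--\eqref{eq:HP12}, at which $\pi^\ast m_{\bar w}$ is generated by $\pi_1,\pi_2,\pi_3$ (equivalently, up to higher order, by $\pi_1$ and $\pi_2$). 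Blowing up such a center is then the same chart-by-chart monomial computation as in (i), with the only difference that the center need not be a log-rank-$0$ point; one checks that, whatever the log-rank, dividing out the new exceptional monomial restores one of the forms \eqref{eq:HP1}--\eqref{eq:HP12}. I expect the main obstacle to be the bookkeeping of exponents and divisibility relations across all combinations of chart and point type in (i) and (ii); conceptually each case is a routine monomial computation once the relevant normal form is written down. Combining the existence statement from \cite[Theorem 1.3]{HP} with these two stability verifications yields Theorem \ref{prop:HP}.
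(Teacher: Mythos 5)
Your proposal is essentially correct and takes the same route that the paper explicitly acknowledges as an alternative. The paper's own proof is a terse citation-based argument: existence is deduced from \cite[Corollary~3.8 and Lemma~3.1]{HP}, and stability under (i) and (ii) is attributed to ``either direct computation or'' precise references to \cite[Lemma~2.3(2), Remark~3.5(2), Lemma~3.4, Lemma~3.1]{HP}. What you have written out is the ``direct computation'' branch of that dichotomy, and your account of how the exponent vectors $\alpha,\beta$ transform chart-by-chart, and of the need to verify the divisibility relations $\pi_1\mid g_i$, $u^\beta\mid h_i$ (respectively $\pmb u^\beta\mid h_i$) and the $\mathbb Q$-independence/divisibility of $\alpha,\beta$ after each blow-up, correctly identifies the substance of that computation.

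Two small points of care. First, for (ii) when $\bar w\notin\mathrm{Sing}(\Sigma)$ there is not literally ``nothing to prove'': even though $\pi$ is an isomorphism over a neighborhood of $\bar w$, principalizing $\pi^\ast m_{\bar w}$ on $\widetilde\Sigma$ introduces a fresh exceptional divisor (a single point blow-up in a regular chart), and one must still record that at the new $1$-points the normal form \eqref{eq:HP1} holds with log-rank $1$ — trivial, but not vacuous. Second, for (ii) when $\bar w\in\mathrm{Sing}(\Sigma)$ the centers of the successive point blow-ups in the principalization need not be log-rank-$0$ points, as you correctly flag; in the Hsiang--Pati charts $\pi^\ast m_{\bar w}$ is a monomial-plus-subordinate ideal (at a $2$-point with log-rank $0$ it is already principal, equal to $(\pmb u^\alpha)$; at a $1$-point with log-rank $1$ it equals $(v,u^\beta)$; etc.), so the blow-ups are combinatorial and the bookkeeping of exponents is the whole content, exactly as you describe. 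Given that the paper itself offers direct computation as a valid route, your outline is a faithful, if necessarily schematic, version of the intended argument.
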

\begin{proof}[Proof of Theorem \ref{prop:HP}]
The existence of the resolution of singularities $\pi$ is guaranteed by \cite[Corollary 3.8]{HP} and \cite[Lemma 3.1]{HP}. The Hsiang-Pati property is preserved by $(i)$ either by direct computation or by \cite[Lemma 2.3(2), Remark 3.5(2), and Lemma 3.1]{HP}; and by $(ii)$ either by direct computation, or by \cite[Lemma 3.4, Lemma 2.3(2), and Lemma 3.1]{HP}.
\end{proof}

In this paper, we use the following consequence, which is a variation of \cite[Lemma 3.1]{HPoriginal}:

\begin{lemma}\label{lem:HPmetric}
With the notation of Theorem \ref{prop:HP}, let $g$ be a Riemmanian metric over $M$. Fix a point $\bar{z}$ in $\widetilde{\Sigma}$. Suppose that $\bar{z}$ is a $2$-point and that the expression of $\pi$ satisfies equation \eqref{eq:HP12}. Then, in a neighborhood of $\bar{z}$, the pulled-back metric $g^{\ast}$ of $g$ is bi-Lipschitz equivalent to the metric
\[
ds^2 = (d \pmb{u}^{\alpha})^2 + (d \pmb{u}^{\beta})^2.
\]
\end{lemma}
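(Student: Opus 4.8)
The plan is to prove Lemma \ref{lem:HPmetric} by a direct local computation in the Hsiang-Pati coordinates provided by Theorem \ref{prop:HP}. First I would fix the $2$-point $\bar z$ and write $\pi$ in the normal form \eqref{eq:HP12}, i.e.\ $\pi_1 = \pmb u^\alpha$, $\pi_2 = g_2(\pmb u) + \pmb u^\beta$, and $\pi_i = g_i(\pmb u) + h_i(\pmb u)$ for $i\geq 3$, where $\pi_1 \mid g_i$ and $\pmb u^\beta \mid h_i$, and $\pmb u^\alpha \mid \pmb u^\beta$. Since bi-Lipschitz equivalence of Riemannian metrics on a manifold is insensitive to the choice of coordinates on the target, I would replace $g$ by the Euclidean metric $\sum_i (d\pi_i)^2$ in the $(\pi_1,\ldots,\pi_N)$ coordinates, which is bi-Lipschitz equivalent to $g$ on a compact neighborhood of $\bar w=\pi(\bar z)$. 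Then $g^\ast = \pi^\ast g$ is bi-Lipschitz equivalent to $\sum_i (d(\pi_i\circ\pi))^2 = \sum_i (d\pi_i)^2$ read as forms on $\widetilde\Sigma$, so the whole question reduces to showing $\sum_i (d\pi_i)^2 \sim (d\pmb u^\alpha)^2 + (d\pmb u^\beta)^2$ as quadratic forms in $(du_1, du_2)$.

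The main step is then the following claim: each $d\pi_i$ with $i\geq 3$ is a \emph{pointwise bounded combination} of $d\pmb u^\alpha$ and $d\pmb u^\beta$ near $\bar z$, i.e.\ $d\pi_i = a_i\, d\pmb u^\alpha + b_i\, d\pmb u^\beta$ with $a_i, b_i$ bounded analytic functions (more precisely, one should argue this for $d g_i$ and $d h_i$ separately). For $d g_i$: since $\pmb u^\alpha = \pi_1 \mid g_i$, write $g_i = \pmb u^\alpha\, \tilde g_i$ with $\tilde g_i$ analytic, but one must be careful — differentiating gives $d g_i = \tilde g_i\, d\pmb u^\alpha + \pmb u^\alpha\, d\tilde g_i$ and the second term needs to be reabsorbed. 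The cleaner route, which is exactly the content of \cite[Lemma 3.1]{HPoriginal}, is to use the divisibility relations together with the monomial structure: writing $\pmb u^\alpha = u_1^{\alpha_1}u_2^{\alpha_2}$ and $\pmb u^\beta = u_1^{\beta_1} u_2^{\beta_2}$ with $\alpha_1 \leq \beta_1$, $\alpha_2 \leq \beta_2$ (componentwise divisibility), any monomial $u_1^p u_2^q$ dividing $g_i$ with $p \geq \alpha_1, q\geq \alpha_2$ has differential expressible as (bounded function)$\cdot d\pmb u^\alpha$ plus lower-order terms, and similarly $h_i$ being divisible by $\pmb u^\beta$ gives $d h_i$ controlled by $d\pmb u^\beta$. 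So I would prove the auxiliary fact: for $p\geq\alpha_1$, $q\geq\alpha_2$, $d(u_1^p u_2^q) = \phi\, d\pmb u^\alpha + \psi\, d(\text{something of strictly higher order})$, and iterate/collect, ultimately expressing every $d\pi_i$ ($i\geq 2$, using also $g_2\mid$ by $\pi_1$ and $\pmb u^\beta\mid h$-part) inside the $\mathcal O$-span of $\{d\pmb u^\alpha, d\pmb u^\beta\}$ with bounded coefficients.

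Granting that claim, the conclusion is routine linear algebra: $\sum_i (d\pi_i)^2 = (d\pmb u^\alpha)^2 + (d\pmb u^\beta)^2 + \sum_{i\geq 3}(a_i\, d\pmb u^\alpha + b_i\, d\pmb u^\beta)^2$, which is clearly bounded above by $C\big((d\pmb u^\alpha)^2 + (d\pmb u^\beta)^2\big)$ since the $a_i, b_i$ are bounded, and bounded below by $(d\pmb u^\alpha)^2 + (d\pmb u^\beta)^2$ trivially (the extra terms are nonnegative). Hence the two metrics are bi-Lipschitz equivalent on a neighborhood of $\bar z$, after possibly shrinking to a relatively compact neighborhood so that all coefficient functions are bounded. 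I expect the main obstacle to be the bookkeeping in the claim — making precise that the "error terms" produced when differentiating monomials $\pmb u^\alpha\,\tilde g_i$ and $\pmb u^\beta\,\tilde h_i$ are themselves absorbable into the $\mathcal O$-span of $d\pmb u^\alpha$ and $d\pmb u^\beta$; this is where the componentwise divisibility $\alpha \leq \beta$ and $\pi_1 \mid g_i$, $\pmb u^\beta \mid h_i$ must be used carefully, and it is essentially the computation behind \cite[Lemma 3.1]{HPoriginal}, which I would cite and adapt rather than redo in full.
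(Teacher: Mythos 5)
Your overall plan — reduce to the Euclidean metric in the target coordinates, express each $d\pi_i$ as a bounded $\mathcal O$-linear combination of $d\pmb u^{\alpha}$ and $d\pmb u^{\beta}$, and compare quadratic forms — is the same as the paper's. But two of your intermediate claims are wrong as stated, and the second one undermines the lower bound.

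First, the claim that $d g_i$ is controlled by $d\pmb u^{\alpha}$ does not follow from the divisibility $\pi_1\mid g_i$ alone, componentwise or otherwise. Writing $g_i=\pmb u^{\alpha}\tilde g_i$ gives $dg_i=\tilde g_i\,d\pmb u^{\alpha}+\pmb u^{\alpha}\,d\tilde g_i$, and the second term is \emph{not} generically a bounded multiple of $d\pmb u^{\alpha}$: take $\tilde g_i=u_2$, $\alpha=(1,0)$, so $dg_i=u_2\,du_1+u_1\,du_2$, while $d\pmb u^{\alpha}=du_1$. The reason the paper gets the clean identity $dg_i=\tilde g_i\,d\pmb u^{\alpha}$ is the extra normal-form condition $d\pi_1\wedge dg_i\equiv 0$ in \eqref{eq:HP12}, which forces $dg_i$ to be \emph{proportional} to $d\pi_1=d\pmb u^{\alpha}$; divisibility then only ensures the coefficient is analytic rather than meromorphic. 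Your proposal never invokes this proportionality condition, and without it the monomial bookkeeping you sketch cannot deliver the claim. Similarly, for $dh$ the paper uses the $\mathbb Q$-linear independence of $\alpha$ and $\beta$ (deduced from $d\pi_1\wedge d\pi_2\not\equiv 0$ and $d\pi_1\wedge dg_2\equiv 0$) to express $du_1/u_1$, $du_2/u_2$ in terms of $d\pmb u^{\alpha}/\pmb u^{\alpha}$, $d\pmb u^{\beta}/\pmb u^{\beta}$, which is again not a purely divisibility-driven argument.

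Second, and more seriously, your final decomposition $\sum_i(d\pi_i)^2=(d\pmb u^{\alpha})^2+(d\pmb u^{\beta})^2+\sum_{i\ge 3}(a_i\,d\pmb u^{\alpha}+b_i\,d\pmb u^{\beta})^2$ is incorrect: it treats $d\pi_2$ as if it were exactly $d\pmb u^{\beta}$, but $\pi_2=g_2(\pmb u)+\pmb u^{\beta}$, so $d\pi_2=\tilde g_2\,d\pmb u^{\alpha}+d\pmb u^{\beta}$. The correct expression has $(\tilde g_2\,d\pmb u^{\alpha}+d\pmb u^{\beta})^2$ in place of $(d\pmb u^{\beta})^2$, which produces a cross term $2\tilde g_2\,d\pmb u^{\alpha}\otimes d\pmb u^{\beta}$ that can be negative. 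Consequently the lower bound is \emph{not} trivial: there are directions in which $(d\pi_2)^2$ vanishes, and one must use the pure $(d\pi_1)^2=(d\pmb u^{\alpha})^2$ term to compensate and verify that the two-by-two form $(1+\tilde g_2^2)(d\pmb u^{\alpha})^2+2\tilde g_2\,d\pmb u^{\alpha}\otimes d\pmb u^{\beta}+(d\pmb u^{\beta})^2$ is uniformly positive definite on a neighborhood where $|\tilde g_2|$ is bounded. This positive-definiteness step is the actual content of the lower bound in the paper's proof, and it disappears in your write-up precisely because of the misstatement of the decomposition.
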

\begin{proof}[Proof of Lemma \ref{lem:HPmetric}]
Let $\bar{w}=\pi(\bar{z})$. We start by noting that locally (over $\bar{w}$) the metric $g$ is bi-Lipschitz equivalent to the Euclidean metric $Euc = d\pi_1^2 + d\pi_2^2 + d\pi_3^2$. Therefore, it is enough to prove that the pull-back of $Euc$ is bi-Lipschitz equivalent to $ds^2$. By equation \eqref{eq:HP12} we get
\[
\begin{aligned}
Euc^{\ast} &=  d\pi_1^2 + d\pi_2^2 + d\pi_3^2\\
&= (d \pmb{u}^{\alpha})^2 + [d(g_2(\pmb{u}) + \pmb{u}^{\beta})]^2 +  [d(g_3(\pmb{u}) + h(\pmb{u}))]^2 .
\end{aligned}
\]
Now, since $dg_i \wedge d\pi_1 \equiv 0$ and $\pi_1$ divides $g_i$, we see that
\[
d g_i(\pmb{u}) = \widetilde{g_i}(\pmb{u}) d\pmb{u}^{\alpha}
\]
for some analytic functions $\widetilde{g_i}$ for $i=2,3$. Furthermore, since $\alpha$ and $\beta$ are $\mathbb{Q}$-linearly independent (because $d\pi_1\wedge d\pi_2 \not\equiv0$ and $d\pi_1 \wedge d g_2 \equiv 0$) and $h(\pmb{u})$ is an analytic function divisible by $\pmb{u}^{\beta}$, we deduce that
\[
\begin{aligned}
d h(\pmb{u}) &= h_{\alpha}(\pmb{u})d\pmb{u}^{\alpha} + h_{\beta}(\pmb{u}) d\pmb{u}^{\beta}
\end{aligned}
\]
for some analytic functions $h_{\alpha}$ and $h_{\beta}$. 
Indeed, 
since $\alpha$ and $\beta$ are $\mathbb{Q}$-linearly independent, 
$\frac {d u_1}{u_1}$, $\frac {d u_2}{u_2}$ are 
$\mathbb{Q}$-linear 
combinations of  $\frac {d\pmb{u}^{\alpha}} {\pmb{u}^{\alpha}}$, $\frac 
{d\pmb{u}^{\beta}} {\pmb{u}^{\beta}}$.  Therefore, since $h(\pmb{u})$ is an analytic function divisible by $\pmb{u}^{\beta}$, 
\begin{align*}
\frac {d h(\pmb{u})} {\pmb{u}^{\beta}} &= h_1 \frac {d u_1}{u_1} + h_2 \frac {d u_2}{u_2} = \tilde h_\alpha\frac {d\pmb{u}^{\alpha}} {\pmb{u}^{\alpha}} + \tilde h_\beta
\frac {d\pmb{u}^{\beta}} {\pmb{u}^{\beta}}. 
\end{align*}
Then it suffices to multiply the above identity by $\pmb{u}^{\beta}$ 
(recall that $\pmb{u}^{\alpha}$ divides 
$\pmb{u}^{\beta}$).

This implies that
\[
\begin{aligned}
Euc^{\ast} &= \left(d\pmb{u}^{\alpha}\right)^2 \left(1 + h_{\alpha}(\pmb{u})^2 + \sum_{i=2}^3  \widetilde{g_i}(\pmb{u})^2   \right) +  \left(d\pmb{u}^{\beta}\right)^2\left( 1+ h_{\beta}(\pmb{u})^2\right)\\
&+ 2d\pmb{u}^{\alpha} \otimes d\pmb{u}^{\beta}\bigl(\widetilde{g}_2(\pmb{u})+ \left[\widetilde{g}_3(\pmb{u}) +h_{\alpha}(\pmb{u})\right] h_{\beta}(\pmb{u})  \bigr).
\end{aligned}
\]
On the one hand, by using the inequality $a^2+2ab+b^2 \leq 2(a^2+b^2)$, we get
\[
\begin{aligned}
Euc^{\ast} &\leq  2\left(d\pmb{u}^{\alpha}\right)^2 \left(1 + h_{\alpha}(\pmb{u})^2 + \sum_{i=2}^3  \widetilde{g_i}(\pmb{u})^2   \right) +  2\left(d\pmb{u}^{\beta}\right)^2\left( 1+ h_{\beta}(\pmb{u})^2\right)\\
&\leq K \left[ \left(d\pmb{u}^{\alpha}\right)^2 + \left(d\pmb{u}^{\beta}\right)^2\right]
\end{aligned}
\]
for some positive $K>0$. On the other hand, by using the inequality $a^2\geq 0$ we get
\[
\begin{aligned}
Euc^{\ast} &\geq  \left(1 + \widetilde{g_2}(\pmb{u})^2\right) \left(d\pmb{u}^{\alpha}\right)^2 +  \left(d\pmb{u}^{\beta}\right)^2 + 2\widetilde{g_2}(\pmb{u}) d\pmb{u}^{\alpha} \otimes d\pmb{u}^{\beta}.
\end{aligned}
\]
Since $|\widetilde{g}_2(\pmb{u})| \leq \ell$ for some $\ell>0$, it follows that there exists $k=k(\ell)>0$ small such that $(1-k)\sqrt{1+\widetilde{g}_2(\pmb{u})^2} \geq |\widetilde{g}_2(\pmb{u})|$.
Hence 
$$
2|\widetilde{g_2}(\pmb{u}) d\pmb{u}^{\alpha} \otimes d\pmb{u}^{\beta}| \leq (1-k)\left(1 + \widetilde{g_2}(\pmb{u})^2\right) \left(d\pmb{u}^{\alpha}\right)^2 +  (1-k)\left(d\pmb{u}^{\beta}\right)^2,
$$
from which  we deduce that
\[
Euc^{\ast} \geq  k\left[ \left(d\pmb{u}^{\alpha}\right)^2 + \left(d\pmb{u}^{\beta}\right)^2\right],
\]
concluding the proof of the lemma.
\end{proof}

\subsection{Reduction of singularities of a planar real-analytic line foliation}\label{app:ReductionLineFoliations}

Consider an analytic vector field $\mathcal{Z}$ over an open and connected set $U \subset \mathbb{R}^2$ and denote by $\eta$ the analytic $1$-form associated to it by the relation $i_{\mathcal{Z}}\omega_{U} = \eta$, where $\omega_{U}$ is the volume form associated to the Euclidean metric. A point $x\in U$ is said to be a singularity of $\mathcal{Z}$ if $\mathcal{Z}(x) =0$. We assume that $\mathcal{Z} \not\equiv 0$, which implies that the singular set $Sing(\mathcal{Z})$ is a proper analytic subset of $U$. We now recall the definition of elementary singularities (following \cite[Definition 4.27]{IY}):

\begin{definition}[Elementary singularities]\label{def:ElementarySing} Suppose that the origin $0 \in U$ is a singular point of $\mathcal{Z}$ and consider the Jacobian matrix $\mbox{Jac}(\mathcal{Z})$ associated to $\mathcal{Z}$. We say that $0$ is an elementary singularity of $\mathcal{Z}$ if $\mbox{Jac}(\mathcal{Z})$ evaluated at $0$ has at least one eigenvalue with non-zero real part.
\end{definition}

\begin{remark}[On elementary singularities]\label{rk:ElembentarySing}
Given a vector field $\mathcal{Z} = A(x,y) \partial_x + B(x,y) \partial_y$ defined in $\mathbb{R}^2$, the Jacobian of $\mathcal{Z}$ is given by
\[
\mbox{Jac}(\mathcal{Z})(x,y) = \left[\begin{matrix}
\partial_xA(x,y)& \partial_yA(x,y)\\ \partial_xB(x,y)& \partial_yB(x,y)
\end{matrix}\right].
\]
Therefore, the eigenvalues of $\mbox{Jac}(\mathcal{Z})$ are solutions (in $\lambda$) of the following polynomial equation:
\begin{equation}\label{eq:EigenvaluesJac}
\lambda^2 - \mbox{tr}(\mbox{Jac}(\mathcal{Z})) \lambda + \det(\mbox{Jac}(\mathcal{Z})) =0
\end{equation}
where $\mbox{tr}(\cdot)$ and $\det(\cdot)$ stand for the trace and determinant respectively. In particular:
\begin{itemize}
\item[(i)] if $\det(\mbox{Jac}(\mathcal{Z})(0))<0$, then the two solutions of equation \eqref{eq:EigenvaluesJac} are non-zero real numbers with different signs. This implies that $0$ is a saddle singularity of $\mathcal{Z}$.

\item[(ii)] if $\mathcal{Z}$ has an elementary singularity at $0$ and $\mbox{tr}(\mbox{Jac}(\mathcal{Z})(0))=0$, then $\det(\mbox{Jac}(\mathcal{Z})(0))<0$ (otherwise, the real part of the eigenvalues $\lambda$ would be zero, a contradiction).
\end{itemize}
\end{remark}

Given an analytic surface $\mathscr{S}$, we recall that a line foliation $\mathscr{L}$ is a coherent sub-sheaf of the tangent sheaf $T\mathscr{S}$ such that, for each point $x\in \mathscr{S}$, there exists a neighborhood $U_x$ of $x$ and a vector field $\mathcal{Z}$ defined over $U_x$ which generates $\mathscr{L}$. We define, therefore, the notion of elementary singularities of a line foliation in a trivial way. The objective of a reduction of singularities of a line foliation $\mathscr{L}$ is to provide a sequence of blowings-up so that the ``transform'' of $\mathscr{L}$ only have isolated elementary singularities (and is ``adapted'' to the divisor $E$). 

More precisely, consider an admissible blowing-up $\sigma: (\widetilde{\mathscr{S}},\widetilde{E}) \to (\mathscr{S},E)$ with center $\mathcal{C}$ and exceptional divisor $F$. The \emph{strict transform} of $\mathscr{L}$ is the analytic line foliation $\widetilde{\mathscr{L}}$ which satisfies
\[
\widetilde{\mathscr{L}} \cdot \mathcal{I}_F^{r} = d\sigma^{\ast}(\mathscr{L}),
\]
where $\mathcal{I}_F$ is the reduced ideal sheaf whose zero set is $F$, $d\sigma^{\ast}(\mathscr{L})$ is the pull-back of $\mathscr{L}$ (which might have poles) and $r$ is the maximal number in $\{-1\} \cup \mathbb{N}$ so that $\widetilde{\mathscr{L}}$ is well-defined.

Finally, we say that a line foliation $\mathscr{L}$ is adapted to a SNC divisor $E$ if, for each irreducible component $F$ of $E$:\\
- either $\mathscr{L}$ is everywhere tangent to $F$ (in which case, we say that $\mathscr{L}$ is non-dicritical over $F$);\\
- or $\mathscr{L}$ is everywhere transverse to $F$  (in which case, we say that $\mathscr{L}$ is dicritical over $F$).

The classical Bendixson-Seidenberg Theorem (see e.g. \cite[Theorem 3.3]{ADL} or \cite[Theorem 8.14 and Section 8K]{IY} and references there-within) stated in the notation of this work, yields the following:

\begin{theorem}[Reduction of singularities of planar line foliations]\label{thm:RSVectorField}
Let $\mathscr{S}$ be a real-analytic surface, $E$ be a SNC crossing divisor over $\mathscr{S}$, and $\mathscr{L}$ be an analytic line foliation over $\mathscr{S}$ which is everywhere non-zero. Then there exists a proper analytic morphism
\[
\pi: (\widetilde{\mathscr{S}},\widetilde{E}) \to (\mathscr{S},E)
\]
which is a sequence of admissible blowings-up (see Definition \ref{def:SeqAdmBlowings-up}), such that the strict transform $\widetilde{\mathscr{L}}$ is adapted to $\widetilde{E}$, and its singular points are all isolated and elementary. 
\end{theorem}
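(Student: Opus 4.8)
The plan is to recognise this statement as the classical Bendixson--Seidenberg reduction theorem and to deduce it from the versions available in \cite[Theorem 8.14 and Section 8K]{IY} or \cite[Theorem 3.3]{ADL}, after two auxiliary reductions: one before the cited step, to pass to isolated singularities, and one after it, to make the resulting foliation adapted to the divisor.

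First I would reduce to the case where $Sing(\mathscr{L})$ is discrete. A local generator $\mathcal{Z}$ of $\mathscr{L}$ vanishes on $Sing(\mathscr{L})$, which is a proper analytic subset of the surface $\mathscr{S}$, hence of dimension at most $1$. Whenever a $1$-dimensional component $C$ of $Sing(\mathscr{L})$ occurs, with reduced local equation $f$, each coordinate component $\mathcal{Z}(x_i)$ vanishes on $\{f=0\}$, so $f$ divides $\mathcal{Z}$; dividing out the maximal such power, and iterating over all such components (which terminates by Noetherianity), produces the \emph{saturation} $\widehat{\mathscr{L}}$ of $\mathscr{L}$, a coherent line foliation with the same integral curves as $\mathscr{L}$, with $\mathscr{L}=g\cdot\widehat{\mathscr{L}}$ for an analytic $g$ whose zero set is purely $1$-dimensional, and with $Sing(\widehat{\mathscr{L}})$ discrete. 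Since a sequence of point blowings-up transforms $\mathscr{L}$ and $\widehat{\mathscr{L}}$ compatibly (any transform of $\mathscr{L}$ differs from the corresponding transform of $\widehat{\mathscr{L}}$ only by the pull-back of $g$, a factor supported on an analytic curve), it suffices to resolve $\widehat{\mathscr{L}}$; so from now on I assume $Sing(\mathscr{L})$ to be a locally finite set of points.

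Next comes the core local step. By properness and local finiteness it suffices to work in a relatively compact neighbourhood of each isolated singular point $\bar z$, which then contains only finitely many singular points. For $\bar z$ I would invoke the local Bendixson--Seidenberg theorem: there is a finite sequence of blowings-up of points, starting with the blow-up of $\bar z$, after which the strict transform $\widetilde{\mathscr{L}}$ has only isolated elementary singularities over $\bar z$. I would not reprove this; its proof assigns to a non-elementary singularity the pair (multiplicity $\nu$ of a generator, a secondary Newton-polygon invariant), ordered lexicographically, and shows that a suitable blow-up makes this pair drop strictly unless the singularity is already elementary, the delicate sub-case being a nilpotent or vanishing linear part, where preliminary blowings-up are needed before the invariant decreases. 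Each center here is a point, hence automatically has normal crossings with the current SNC divisor, so every such blow-up is admissible and the updated divisor stays SNC; moreover blowing up a regular point or an elementary singularity produces only elementary singularities, so elementarity, once achieved, persists.

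Finally, I would make the foliation adapted to the divisor and patch everything together. After the previous step the exceptional divisor $\widetilde E$ (strict transforms of the components of $E$ together with the new exceptional divisors) may fail to be adapted: a component $F$ that is not invariant by $\widetilde{\mathscr{L}}$ may be tangent to it at isolated points, the tangency locus of such an $F$ being a proper analytic subset of the curve $F$, hence locally finite (an invariant component is automatically non-dicritical and needs nothing). Blowing up each of these points and iterating — a routine local check shows this terminates, makes $\widetilde{\mathscr{L}}$ transverse to the strict transform of $F$ along all of it, and creates neither non-isolated nor non-elementary singularities — and then doing this for each of the locally finitely many components yields a foliation adapted to the resulting divisor. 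Exhausting $\mathscr{S}$ by relatively compact opens and performing on each the finitely many blowings-up prescribed above assembles into a locally finite sequence of admissible blowings-up, i.e. one in the sense of Definition \ref{def:SeqAdmBlowings-up}. The genuine mathematical content — and the step I expect to be the main obstacle — is the local reduction of the previous paragraph, namely the termination of Seidenberg's algorithm in the nilpotent case; this is precisely why the statement is cited rather than reproved here, the present contribution being the reduction to isolated singularities, the passage to an adapted divisor, and the globalization compatible with the pre-existing divisor $E$.
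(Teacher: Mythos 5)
The paper does not give a proof of Theorem \ref{thm:RSVectorField}: it simply records that the classical Bendixson--Seidenberg theorem, as found in \cite[Theorem 3.3]{ADL} or \cite[Theorem 8.14 and Section 8K]{IY}, ``stated in the notation of this work, yields the following.'' Your proposal does the same thing at bottom --- it delegates the hard local step (termination of Seidenberg's algorithm, including the nilpotent case) to the same references --- but makes explicit the bookkeeping the paper glosses over: reducing to isolated singularities by saturation, treating the components of the ambient divisor $E$ that are not invariant by the foliation so as to make the strict transform adapted, and globalising the locally finite sequence of point blow-ups. So this is the same approach, just with the routine reductions written out.

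One small caveat worth flagging. In your first paragraph you conclude that ``it suffices to resolve $\widehat{\mathscr{L}}$.'' As the paper defines it, the strict transform of $\mathscr{L}$ divides only by powers of the exceptional ideal $\mathcal{I}_F$, not by the extra factor $g$ supported on the curve $\{g=0\}\not\subset E$; so if $Sing(\mathscr{L})$ genuinely has a one-dimensional component outside $E$, the strict transform of $\mathscr{L}$ --- as opposed to that of $\widehat{\mathscr{L}}$ --- keeps a one-dimensional singular locus along the strict transform of that curve, and no sequence of point blow-ups can make it discrete. The statement is implicitly understood for (line) foliations in the saturated, Baum--Bott sense (and in the paper's only application, Theorem \ref{thm:RSgeneral}, the singular locus of $\mathscr{L}$ is already contained in the exceptional divisor after the Hsiang--Pati resolution, so the issue never arises). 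Your saturation reduction is the right move, but the correct conclusion of that paragraph is that the theorem should be read for $\widehat{\mathscr{L}}$; it does not literally transfer back to the unsaturated $\mathscr{L}$ under the paper's notion of strict transform.
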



\end{document}